\newcommand{\tc}[2]{\textcolor{#1}{#2}}
\definecolor{dmagenta}{rgb}{.5,0,.5} 
\definecolor{dred}{rgb}{.5,0,0} 
\definecolor{dgreen}{rgb}{0,.5,0} 
\definecolor{dblue}{rgb}{0,0,0.5} 
\definecolor{black}{rgb}{0,0,0} 
\definecolor{vdgreen}{rgb}{0,.3,0} 
\definecolor{vdred}{rgb}{.3,0,0} 
\definecolor{red}{rgb}{1,0,0} \newcommand{\red}[1]{\tc{red}{#1}}
\newcommand{\lplus}{{\mathfrak{h}}}        
\newcommand\cG{{\mathcal{G}}}
\newcommand\cGB{{\mathcal{G}^b}}
\newcommand\cO{{\mathcal{O}}} 
\newcommand{\Lie}{\mathsf{Lie}}  
\newcommand{\Q}{\F}
\newcommand{\F}{{\mathds{k}}}
\newcommand{\Z}{{\mathbb{Z}}}
\newcommand{\sJ}{\mathsf{J}}
\newcommand{\hairy}{\mathcal H}  
\newcommand{\ext}{\bigwedge\nolimits}
\newcommand{\id}{\mathrm{Id}}   
\DeclareMathOperator{\Tr}{Tr}  
\DeclareMathOperator{\SP}{Sp}  
\DeclareMathOperator{\Sym}{Sym}  
\newcommand{\sym}[1]{\mathfrak{S}_{#1}}
\DeclareMathOperator{\Aut}{Aut}  
\DeclareMathOperator{\Out}{Out} 
\DeclareMathOperator{\Tor}{Tor}  
\DeclareMathOperator{\End}{End}  
\DeclareMathOperator{\Mod}{Mod}  
\DeclareMathOperator{\im}{im}  
\DeclareMathOperator{\SL}{SL}
\DeclareMathOperator{\GL}{GL}  
\DeclareMathOperator{\ad}{ad}
\let\ker\undefined
\DeclareMathOperator{\ker}{ker}  
\let\sl\undefined
\DeclareMathOperator{\sl}{\mathfrak{sl}}   
\DeclareMathOperator{\Hom}{Hom}
\DeclareMathOperator{\HomH}{Hom^\mathcal{H}}
\newcommand{\SF}[1]{{\mathbb S}_{#1}}  
\newcommand{\arity}[1]{{(\!(#1)\!)}}
\newcommand{\sL}{{\mathsf L}}
\newcommand{\sD}{{\mathsf D}}
\newcommand{\cT}{\mathcal T}
\newcommand{\la}{\langle}
\newcommand{\ra}{\rangle}
\newtheorem{proposition}{Proposition}[section]
\newtheorem{theorem}[proposition]{Theorem}
\newtheorem{lemma}[proposition]{Lemma}
\newtheorem{claim}[proposition]{Claim}
\newtheorem{corollary}[proposition]{Corollary}
\theoremstyle{remark}
\theoremstyle{definition}
\newtheorem{definition}[proposition]{Definition}
\newtheorem{remark}[proposition]{Remark}
\newtheorem*{example}{Example}
\newtheorem*{examples}{Examples}
\newtheoremstyle{red}{3pt}{3pt}{\color{red}}{}{\itshape}{.}{.5em}{}
\theoremstyle{red}
\def\Ddots{\mathinner{\mkern1mu\raise\p@
\vbox{\kern7\p@\hbox{.}}\mkern2mu
\raise4\p@\hbox{.}\mkern2mu\raise7\p@\hbox{.}\mkern1mu}}
\tikzset{
empty/.style={inner sep=-1pt,minimum size=0mm},
emptyunit/.style={circle,draw=white,fill=white,thick, inner sep=0pt,minimum size=2mm},
emptyantipode/.style={circle,draw=white,fill=white,thick, inner sep=0pt,minimum size=4mm},
operad/.style={circle,draw=red!50,fill=red!20,thick, inner sep=0pt,minimum size=6mm},
hopf/.style={signal, signal to=east, signal from=west,draw=brown!50,fill=brown!20,thick, inner sep=2pt,minimum size=6mm},
lhopf/.style={signal, signal to= west, signal from= east,draw=brown!50,fill=brown!20,thick, inner sep=2pt,minimum size=6mm},
antipode/.style={circle,draw=purple!50,fill=purple!20,thick, inner sep=0pt,minimum size=4mm},
unit/.style={circle,draw=black,fill=white,thick, inner sep=0pt,minimum size=2mm},
break/.style={inner sep=0pt,minimum size=5mm},
block/.style={draw=blue,fill=blue!20,thick,inner sep=10pt}
outer/.style={}
}
\title{Hopf algebras and invariants of the Johnson cokernel
}
\author{Jim Conant, Martin Kassabov}
\begin{document}
\begin{abstract}
We show that if $H$ is a cocommutative Hopf algebra, then there is a natural action of $\Aut(F_n)$ on $H^{\otimes n}$ which induces an $\Out(F_n)$ action on a quotient $\overline{H^{\otimes n}}$. In the case when $H=T(V)$ is the tensor algebra, we show that the invariant $\Tr^C$ of the cokernel of the Johnson homomorphism studied in~\cite{C} projects to take values in $H^{vcd}(\Out(F_n);\overline{H^{\otimes n}})$. We analyze the $n=2$ case, getting large families of obstructions generalizing the abelianization obstructions of~\cite{CKV2}.
\end{abstract}
\maketitle

\section{Introduction}
\label{sec:intro}
The mapping class group $\Mod(g,1)$ of a surface of genus $g$ with one boundary component carries the \emph{Johnson filtration} $$
\Mod(g,1)= \mathbb J_0 \supset \mathbb J_1\supset \mathbb J_2 \supset \cdots
$$
defined by letting $\mathbb J_s$ be the those elements of $\Mod(g,1)$ which act trivially on $\pi_1(\Sigma_{g,1})$ modulo the $(s+1)$st term of the lower central series. The associated graded, tensored with  a characteristic $0$ field $\F$, has the structure of a Lie algebra $\sJ$.
The Johnson homomorphism is an embedding of Lie algebras
$\tau\colon \sJ\to\sD^+$, where $\sD^+=\sD^+$ has several natural algebraic definitions (see section~\ref{sec:cokernel}). Both $\sJ$ and $\sD^+$ have decompositions into $\SP$ modules, stemming from the symplectic structure on $H_1(\Sigma_{g,1};\F)$, and a basic problem in this area is to identify the (stable) $\SP$-decomposition of $\sJ$. (The decomposition stabilizes as $g\to\infty$.) In some sense the decomposition of the Lie algebra $\sD^+$ is ``easy," given its straightforward definition, so the problem can be reduced to studying the cokernel of the Johnson homomorphism $\mathsf C=\sD^+/\tau \sJ$, see~\cite{MoSurvey} for an overview.

One way to define the Lie algebra $\sD^+$ is as a vector space of unitrivalent trees with leaves labeled by $V=H_1(\Sigma_{g,1},\F)$, modulo IHX and orientation relations. The bracket is defined by summing over joining two trees along a pair of univalent vertices --multiplying by the contraction of the vertex labels--in all possible ways. By a theorem of Hain~\cite{Ha}, $\im\tau\subset\sD^+$ is generated (in the stable range) as a Lie subalgebra by trees with one trivalent vertex (called \emph{tripods}).

In~\cite{C} a ``trace" map $\Tr^C\colon \sD^+\to \oplus_{n\geq 1}\Omega_n(V)$ was constructed. The spaces $\Omega_n(V)$ are generated by graphs formed by adding $n$ ``external" oriented edges to a labeled tree from $\sD^+$, while the trace map is defined by adding sets of edges to a tree in all possible ways, multiplying by contractions of labeling coefficients. A suitable quotient is taken to ensure that $\Tr^C$ vanishes on iterated brackets of tripods. The spaces $\Omega_n(V)$ detect many families of cokernel elements~\cite{C,C2}, and could plausibly even be used to give a complete description of the cokernel.  However the combinatorics quickly becomes complicated, and it becomes desirable to have a more conceptual description of these obstruction spaces. To that end, in the current paper, we construct, for $n\geq 2$, surjections
$$
\Omega_n(V)\twoheadrightarrow H^{2n-3}(\Out(F_n);\overline{T(V)^{\otimes n}}),
$$
where the coefficient module is a certain quotient of the $n$-th tensor power of the tensor algebra $T(V)$ and the action of $\Out(F_n)$ is defined via the Hopf algebra structure on $T(V)$. Indeed, we show that for any cocommutative Hopf algebra $H$, $\Aut(F_n)$ acts in a natural way on $H^{\otimes n}$.   When $H=\Sym(V)$, it reduces to the standard action of $\GL_n(\Z)$ on $\Sym(V)^{\otimes n}\cong \Sym(V\otimes \F^n)$. The module $\overline{H^{\otimes n}}$ is an appropriate quotient on which inner automorphisms act trivially, so that one gets an $\Out(F_n)$ action. See section~\ref{sec:action_on_Hn} for complete details. As far as we know, it is a novel construction. In particular, it does not factor through $\GL_n(\Z)$ and even in the case when $n=2$ and $\Out(F_2)=\GL_2(\Z)$, it does not extend to an action of $\SL^{\pm}_2(\F)$.

In preparation for stating the main theorem, we note that, as observed in~\cite{C}, a result of~\cite{CKV2} implies that $\Tr^C$ surjects onto each $[\lambda]_{\SP}\subset [\lambda]_{\GL}$ in the $\GL$-decomposition of $\Omega_n(V)$, for $g$ sufficiently large with respect to $|\lambda|$.

As a result, we have the following theorem. (The $n=1$ case is calculated  is in \cite{C}.)

\begin{theorem}
\label{thm:intro}
For all $n\geq 2$, and for $g$ sufficiently large compared to the degree, there is an invariant defined on the Johnson cokernel, taking values in
$ H^{2n-3}(\Out(F_n);\overline{T(V)^{\otimes n}})$.
Moreover, for large enough $g$ compared to $|\lambda|$, the invariant surjects onto each $[\lambda]_{\SP}\subset [\lambda]_{\GL}$ in the $\GL$-decomposition of the image.
\end{theorem}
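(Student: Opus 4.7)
The plan is to assemble the invariant by chaining three facts: (i)~Hain's generation theorem for $\im\tau$, (ii)~the construction of $\Tr^C$ in~\cite{C} and its vanishing on iterated brackets of tripods, and (iii)~the surjection $\Omega_n(V)\twoheadrightarrow H^{2n-3}(\Out(F_n);\overline{T(V)^{\otimes n}})$ built earlier in the paper.

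First, I would fix $n\geq 2$ and consider the component $\Tr^C_n\colon \sD^+\to \Omega_n(V)$. By Hain's theorem, in the stable range $\tau\sJ$ is generated as a Lie subalgebra of $\sD^+$ by tripods, so every element of $\tau\sJ$ is a linear combination of iterated brackets of tripods (of bracket length $\geq 2$) together with tripods themselves. The construction of $\Tr^C$ in~\cite{C} is set up so that $\Tr^C$ vanishes on iterated brackets of tripods; for $n\geq 2$ one must additionally check that the $\Omega_n(V)$ component of $\Tr^C$ vanishes on tripods themselves. This is essentially a graph-counting observation: a tripod has only three univalent vertices, and the construction of $\Omega_n$ requires adding $n$ oriented external edges each with two endpoints, which forces a vanishing for $n\geq 2$ by parity/valence. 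Thus $\Tr^C_n$ factors through the cokernel $\mathsf C = \sD^+/\tau\sJ$, defining
\[
\overline{\Tr^C_n}\colon \mathsf C \longrightarrow \Omega_n(V).
\]

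Next, I would compose this with the surjection
\[
\pi_n\colon \Omega_n(V)\twoheadrightarrow H^{2n-3}(\Out(F_n);\overline{T(V)^{\otimes n}})
\]
constructed in the body of the paper (via the $\Aut(F_n)$-action on $T(V)^{\otimes n}$ described in the introduction). The composition $\pi_n\circ \overline{\Tr^C_n}$ is the desired invariant of the Johnson cokernel with values in $H^{2n-3}(\Out(F_n);\overline{T(V)^{\otimes n}})$.

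For the second clause, I would invoke the $\CKV$-type statement quoted in the paragraph preceding the theorem: for $g$ sufficiently large compared to $|\lambda|$, $\Tr^C$ already surjects onto every $\SP$-isotypic block $[\lambda]_{\SP}\subset [\lambda]_{\GL}$ in the $\GL$-decomposition of $\Omega_n(V)$. Since the construction of $\pi_n$ is $\GL$-equivariant (it is built from Hopf-algebra structure that respects the action on $V$), the decomposition of the image of $\pi_n$ inherits the $\GL$- and $\SP$-type structure from $\Omega_n(V)$, and surjectivity onto each $[\lambda]_{\SP}\subset [\lambda]_{\GL}$ is preserved by $\pi_n$. Composing gives surjectivity onto each such block in the image of $\pi_n$, as required.

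The main technical obstacle I anticipate is the verification that $\Tr^C_n$ actually vanishes on the tripod generators themselves for $n\geq 2$ (so that the descent to $\mathsf C$ is legitimate, not only modulo brackets of length $\geq 2$). Once that is in place, the rest is formal composition plus invoking the already-established $\CKV$-surjectivity and $\GL$-equivariance of $\pi_n$.
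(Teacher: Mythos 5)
Your proposal is correct and follows essentially the same route as the paper: the paper assembles Theorem~\ref{thm:intro} by noting that $\Tr^C$ descends to the cokernel because $\Tr(\im\tau)\subset\partial\mathcal S_2$ (citing \cite{C}, which rests on Hain's theorem exactly as you describe), then composing with the surjection $\Omega_n(V)=\mathcal G^{(n)}_{T(V)\Lie,1}/\partial\mathcal S_2\twoheadrightarrow H_1(\mathcal G^{(n)}_{T(V)\Lie})\cong H^{2n-3}(\Out(F_n);\overline{T(V)^{\otimes n}})$ and invoking the \cite{CKV2} surjectivity onto each $[\lambda]_{\SP}\subset[\lambda]_{\GL}$ together with $\GL(V)$-equivariance of the projection. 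The one point you flag as a technical obstacle --- vanishing of $\Tr^C_n$ on tripods for $n\geq 2$ --- is indeed handled exactly by your leaf-count argument (a tripod has only $3$ univalent vertices, so no $n\geq 2$ external edges can be attached) and is in any case subsumed in the cited fact that $\Tr(\im\tau)\subset\partial\mathcal S_2$.
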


Thus, we have a satisfying conceptual construction of modules  $H^{2n-3}(\Out(F_n);\overline{T(V)^{\otimes n}})$ which obstruct being in the image of the Johnson homomorphism. These are less mysterious than $\Omega_n(V)$, which are defined using generators and relations (although see \cite{C2} for progress in calculating these). One might hope that they capture all of $\Omega_n(V)$, but work in \cite{C2} shows that $\Omega_2(V)$ is strictly larger than $ H^{1}(\Out(F_2);\overline{T(V)^{\otimes 2}})$, and it seems likely that this holds for $n>2$ as well.

\subsection{Comparison to existing obstructions}
It has long been known, following again from Hain's theorem, that $\mathsf C$ surjects onto the abelianization of $\sD^+$ for degree $\geq 2$. In~\cite{CKV1}, $\sD^+$ was shown to embed in
$$
\ext^3 V\oplus \bigoplus_{k\geq 1}\Sym^{2k+1}(V)\oplus \bigoplus_{n\geq 2}H^{2n-3}(\Out(F_n);\Sym(V)^{\otimes n}),
$$
and surjects onto each $[\lambda]_{\SP}\subset [\lambda]_{\GL}$. In fact one recovers the abelianization obstructions from Theorem~\ref{thm:intro} by applying the map $T(V)\to \Sym(V)$ to the coefficient modules. In \cite{CKV1}, the $n=2$ part of the abelianization is calculated, while the even degree part in $n=3$ is calculated in \cite{C2}.

As shown in~\cite{C}, the module $\Omega_1(V)$ picks up the trace map constructed by Enomoto and Satoh~\cite{ES}.
Moreover, the Enomoto-Satoh trace map is a lift of Morita's original trace map~\cite{Mo}, which is an invariant of the $n=1$ part of the abelianization of $\sD^+$.
All of these obstructions are organized in Figure~\ref{fig:chart}. The bottom row is given by the abelianization obstructions and the top by the obstructions from~\cite{C}. Sitting in between these are the new obstruction modules which are the subject of this paper.

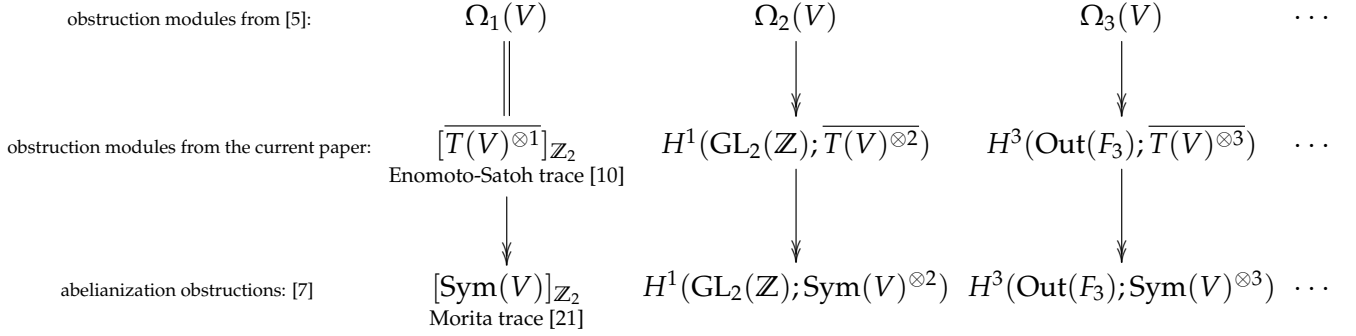
\begin{figure}
$$
\xymatrix@C-2pc{
\text{\tiny obstruction modules from \cite{C}:}&\Omega_1(V)\ar@{=}[d] &{\Omega_2(V)}\ar@{->>}[d] & \Omega_3(V)\ar@{->>}[d]&\cdots\\
\text{\tiny obstruction modules from the current paper:}
&\underset{\text{Enomoto-Satoh trace \cite{ES}}}{[\overline{T(V)^{\otimes 1}}]_{\Z_2}}\ar@{->>}[d]&{H^1(\GL_2(\Z);\overline{T(V)^{\otimes 2}})}\ar@{->>}[d]&{H^3(\Out(F_3);\overline{T(V)^{\otimes 3}})}\ar@{->>}[d]&\cdots\\
{\text{\tiny abelianization obstructions: \cite{CKV1} }}&\underset{\text{Morita trace \cite{Mo}}}{[\Sym(V)]_{\Z_2}}&{H^1(\GL_2(\Z);\Sym(V)^{\otimes 2})}&{H^3(\Out(F_3);\Sym(V)^{\otimes 3})}&\cdots
}
$$
\caption{Obstruction modules for the Johnson cokernel. In section \ref{sec:calculations}, we partially compute $H^1(\GL_2(\Z);\overline{T(V)^{\otimes 2}})$. The module $H^1(\GL_2(\Z);\Sym(V)^{\otimes 2})$ was completely calculated in \cite{CKV1}. Partial computations for $\Omega_2(V), \Omega_3(V)$ and $H^3(\Out(F_3);\Sym(V)^{\otimes 3})$ are made in \cite{C2}, building on the results and methods of the current paper.}
\label{fig:chart}
\end{figure}

\subsection{Explicit computations for $n=2$}
We make explicit computations for the case $n=2$ in section~\ref{sec:calculations}. In particular, let $\sL_{(2)}=V\oplus \ext^2V$ be the free nilpotent Lie algebra of nilpotency class $2$, and let $\overline{\Sym(\sL_{(2)})^{\otimes 2}}$ be the quotient of $\Sym(\sL_{(2)})^{\otimes 2}$ by the image of the adjoint action of $\sL_{(2)}$.
  We show that
there is a surjection
$$
H^1(\Out(F_2);\overline{T(V)^{\otimes 2}})
\twoheadrightarrow
H^1(\GL_2(\mathbb Z);\overline{\Sym(\sL_{(2)})^{\otimes 2}})
$$
where $\GL_2(\mathbb Z)$ acts on
${\Sym(\sL_{(2)})^{\otimes 2}}\cong {\Sym(\sL_{(2)}\otimes \F^2)}$ via the standard action on $\F^2$, which induces the action of $\GL_2(\mathbb Z)$ on $\overline{\Sym(\sL_{(2)})^{\otimes 2}}$.
This latter group can be computed via the methods of~\cite{CKV1}, leading to many families of obstructions
$[\lambda]_{\SP}\otimes \mathcal M_k$ and $[\lambda]_{\SP}\otimes \mathcal S_k,$ where $\mathcal M_k$, $\mathcal S_k$ denote spaces of modular (respectively cusp) forms of weight $k$. The simplest new families of obstructions one gets are
$$
[2k-1,1^2]_{\SP}\otimes \mathcal S_{2k+2}\subset \mathsf{C}_{2k+5}, \text{ and }
 ([2k+1,1^2]_{\SP}\oplus[2k,2,1]_{\SP}\oplus[2k,1^3]_{\SP})\otimes \mathcal M_{2k+2}\subset\mathsf{C}_{2k+7}.
$$

The above surjection exists because the $\GL_2(\Z)$ action on $\overline{T(V)^{\otimes 2}}$ extends to an $\SL^{\pm}_2(\F)$ action modulo commutators of length $3$. However it does not extend modulo commutators of length $4$. This makes the full analysis of
the cohomology group $H^1(\Out(F_2);\overline{T(V)^{\otimes 2}})$ less than straightforward. We hope to pursue this in a future paper.

{\bf Acknowledgements:} We thank Nolan Wallach and Darij Grinburg for helpful discussions.
Jim Conant partially worked on this paper during a visit to Max-Planck-Institut-f\"ur-Mathematik in Summer 2015.
Martin Kassabov was partially supported by  Simons Foundation grant 305181 and NSF grants DMS 0900932 and 1303117.

\section{Hopf algebras} 
Fix a ground field $\F$ of characteristic $0$.
Let $H$ be a cocommutative Hopf algebra with multiplication $m\colon H\otimes H\to H$, comultiplication $\Delta\colon H\to H\otimes H$, unit $\eta\colon \F\to H$, co-unit $\epsilon\colon H\to F$ and antipode $S\colon H\to H$. (See Figure~\ref{graph-calc} for the axioms that these must satisfy.) As we are in the cocommutative setting, the antipode satisfies $S^2=\id$ and $S(ab)=S(b)S(a)$, $\Delta (S(a))=(S\otimes S)\tau \Delta (a)$, where $\tau\colon H\otimes H\to H\otimes H$ is the twist map $a\otimes b\mapsto b\otimes a$.
There is a graphical calculus \`a la Joyal and Street \cite{JS} that can be used to describe the Hopf algebra operations. The basic Hopf algebra operations are depicted in Figure~\ref{graph-calc1}. Graphical translations of the axioms  are depicted in Figure~\ref{graph-calc}.
Our conventions are to read morphisms from left to right and tensor products go up/down.

\begin{figure}
$\underset{\text{Multiplication}}{
\begin{tikzpicture}
\node[empty](aa){\,\,$H$};
\node[empty](bb)[left of=aa]{};
\draw (bb.center) to (aa);
\node[empty](cc)[above left of =bb]{};
\node[empty](ee)[left of=cc]{$H$\,\,};
\node[empty](dd)[below left of =bb]{};
\node[empty](ff)[left of=dd]{$H$\,\,};
\draw (bb.center) to[out=180,in=0] (ee);
\draw (bb.center) to[out=180,in=0] (ff);
\node[empty][above of=bb,node distance=.3cm]{$m$};
\end{tikzpicture}}$
\hspace{5em}
$\underset{\text{Comultiplication}}{
\begin{tikzpicture}
\node[empty](aa){$H$\,\,};
\node[empty](bb)[right of=aa]{};
\draw (bb.center) to (aa);
\node[empty](cc)[above right of =bb]{};
\node[empty](ee)[right of=cc]{\,\,$H$};
\node[empty](dd)[below right of =bb]{};
\node[empty](ff)[right of=dd]{\,\,$H$};
\draw (bb.center) to[out=0,in=180] (ee);
\draw (bb.center) to[out=0,in=180] (ff);
\node[empty][above of=bb,node distance=.3cm]{$\Delta$};
\end{tikzpicture}}$
\hspace{5em}
$\underset{\text{Antipode}}{
\begin{tikzpicture}
\node[empty](aa){$H$\,\,};
\node[antipode](bb)[right of =aa]{S};
\draw (bb) to (aa);
\node[empty](cc)[right of =bb]{\,\,$H$} edge (bb);
\node[empty](ee)[below of =bb]{};
\end{tikzpicture}}$
\\
$\underset{\text{Counit}}{\begin{tikzpicture}
\node[empty](aa){$H$\,\,};
\node[unit](bb)[right of =aa]{} edge (aa);
\node[empty](cc)[right of =bb]{\,\,$\F$} ;
\node[empty](ee)[below of =bb, node distance=.2cm]{};
\node[empty](ff)[above of =bb, node distance=.3cm]{$\epsilon$};
\node[empty](gg)[above of =ff, node distance=.6cm]{};
\end{tikzpicture}}$
\hspace{5em}
$\underset{\text{Unit}}{
\begin{tikzpicture}
\node[empty](aa){$\F$\,\,};
\node[unit](bb)[right of =aa]{};
\node[empty](cc)[right of =bb]{\,\,$H$} edge (bb);
\node[empty](ee)[below of =bb, node distance=.2cm]{};
\node[empty](ff)[above of =bb, node distance=.3cm]{$\eta$};
\node[empty](gg)[above of =ff, node distance=.6cm]{};
\end{tikzpicture}}$
\caption{Hopf algebra operations depicted graphically, read from left to right.}
\label{graph-calc1}
\end{figure}
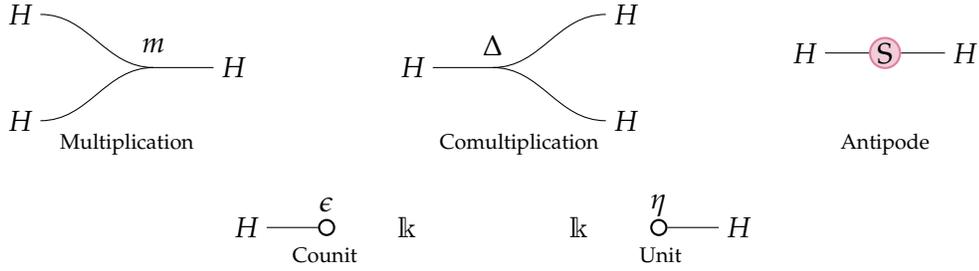

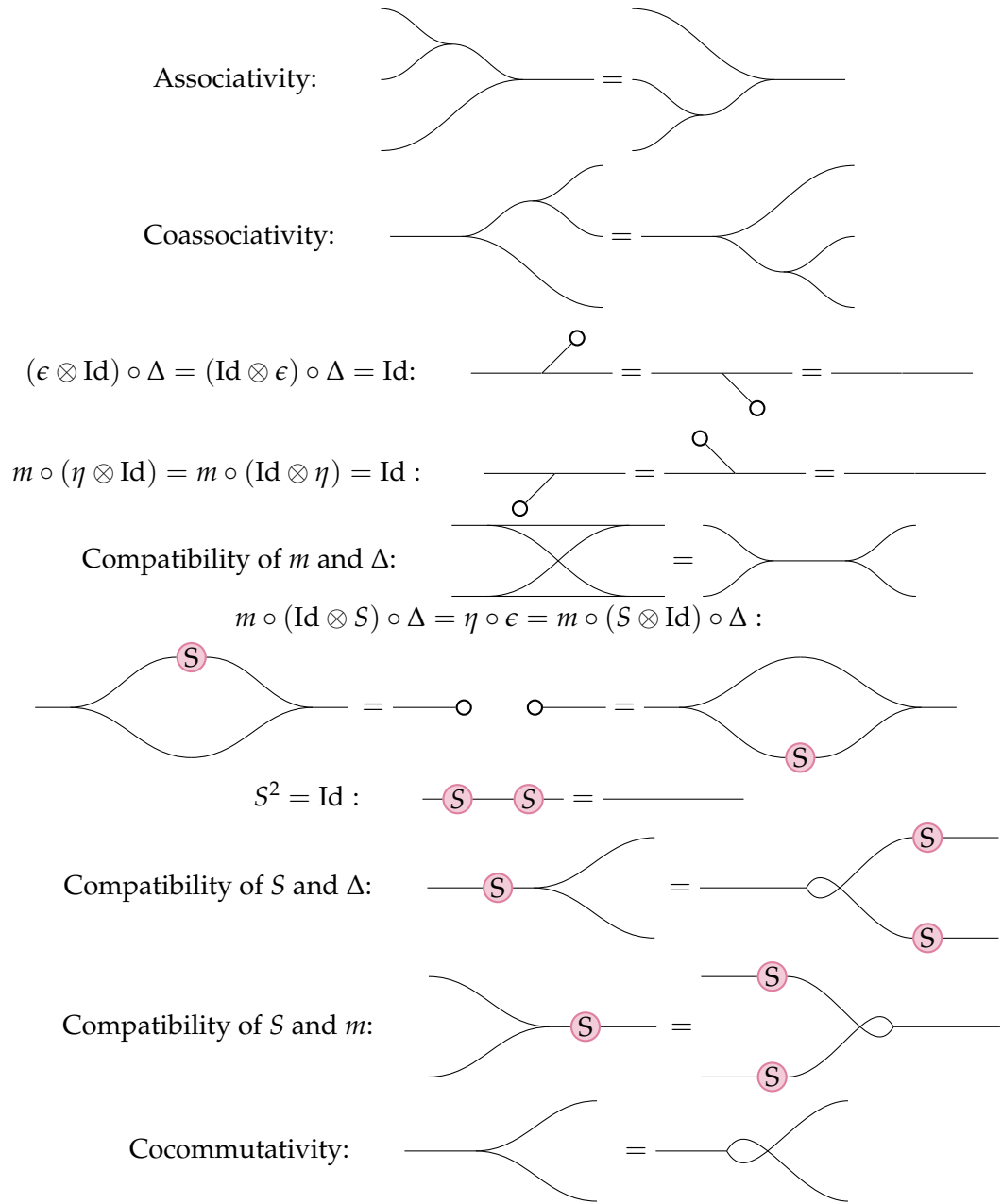
\begin{figure}
$$
\text{Associativity: \hspace{2em}}
\begin{minipage}{3cm}
\begin{tikzpicture}
\node[empty](A){};
\node[empty](B)[left of=A]{};
\node[empty](C)[left of=B]{};
\node[empty](D)[left of=C]{};
\node[empty](E)[above of=C, node distance=.5cm]{};
\node[empty](F)[below of=C, node distance=.5cm]{};
\node[empty](G)[above of=D,node distance=1cm]{};
\node[empty](H)[below  of=D, node distance =1cm]{};
\draw (A.center) to (B.center);
\draw (B.center) to[out=180,in=0] (E.center);
\draw (E.center) to[out=180,in=0] (G.center);
\draw (E.center) to[out=180,in=0] (D.center);
\draw (B.center) to[out=180,in=0] (H.center);
\end{tikzpicture}
\end{minipage}
=
\begin{minipage}{3cm}
\begin{tikzpicture}
\node[empty](A){};
\node[empty](B)[left of=A]{};
\node[empty](C)[left of=B]{};
\node[empty](D)[left of=C]{};
\node[empty](E)[above of=C, node distance=.5cm]{};
\node[empty](F)[below of=C, node distance=.5cm]{};
\node[empty](G)[above of=D,node distance=1cm]{};
\node[empty](H)[below  of=D, node distance =1cm]{};
\draw (A.center) to (B.center);
\draw (B.center) to[out=180,in=0] (F.center);
\draw (F.center) to[out=180,in=0] (H.center);
\draw (F.center) to[out=180,in=0] (D.center);
\draw (B.center) to[out=180,in=0] (G.center);
\end{tikzpicture}
\end{minipage}
$$
$$
\text{Coassociativity:\hspace{2em}}
\begin{minipage}{3cm}
\begin{tikzpicture}
\node[empty](A){};
\node[empty](B)[right of=A]{};
\node[empty](C)[right of=B]{};
\node[empty](D)[right of=C]{};
\node[empty](E)[above of=C, node distance=.5cm]{};
\node[empty](F)[below of=C, node distance=.5cm]{};
\node[empty](G)[above of=D,node distance=1cm]{};
\node[empty](H)[below  of=D, node distance =1cm]{};
\draw (A.center) to (B.center);
\draw (B.center) to[in=180,out=0] (E.center);
\draw (E.center) to[in=180,out=0] (G.center);
\draw (E.center) to[in=180,out=0] (D.center);
\draw (B.center) to[in=180,out=0] (H.center);
\end{tikzpicture}
\end{minipage}
=
\begin{minipage}{3cm}
\begin{tikzpicture}
\node[empty](A){};
\node[empty](B)[right of=A]{};
\node[empty](C)[right of=B]{};
\node[empty](D)[right of=C]{};
\node[empty](E)[above of=C, node distance=.5cm]{};
\node[empty](F)[below of=C, node distance=.5cm]{};
\node[empty](G)[above of=D,node distance=1cm]{};
\node[empty](H)[below  of=D, node distance =1cm]{};
\draw (A.center) to (B.center);
\draw (B.center) to[in=180,out=0] (F.center);
\draw (F.center) to[in=180,out=0] (H.center);
\draw (F.center) to[in=180,out=0] (D.center);
\draw (B.center) to[in=180,out=0] (G.center);
\end{tikzpicture}
\end{minipage}
$$

$$
(\epsilon\otimes \id)\circ \Delta=(\id\otimes\epsilon)\circ \Delta=\id\text{:\hspace{2em}}
\begin{minipage}{2cm}
\begin{tikzpicture}
\node[empty](aa){};
\node[empty](bb)[right of =aa]{} edge (aa.center);
\node[empty](cc)[right of =bb]{} edge (bb.center);
\node[unit](dd)[above right of =bb, node distance =.7cm]{} edge (bb.center);
\node[emptyunit][below right of =bb, node distance =.7cm]{};
\end{tikzpicture}
\end{minipage}
=
\begin{minipage}{2cm}
\begin{tikzpicture}
\node[empty](aa){};
\node[empty](bb)[right of =aa]{} edge (aa.center);
\node[empty](cc)[right of =bb]{} edge (bb.center);
\node[unit](dd)[below right of =bb, node distance =.7cm]{} edge (bb.center);
\node[emptyunit][above right of =bb, node distance =.7cm]{};
\end{tikzpicture}
\end{minipage}
=
\begin{minipage}{2cm}
\begin{tikzpicture}
\node[empty](aa){};
\node[empty](bb)[right of =aa]{} edge (aa.center);
\node[empty](cc)[right of =bb]{} edge (bb.center);
\end{tikzpicture}
\end{minipage}
$$
$$
m\circ(\eta\otimes \id)=m\circ(\id\otimes \eta)=\id:\text{\hspace{2em}}
\begin{minipage}{2cm}
\begin{tikzpicture}
\node[empty](aa){};
\node[empty](bb)[right of =aa]{} edge (aa.center);
\node[empty](cc)[right of =bb]{} edge (bb.center);
\node[unit](dd)[below left of =bb, node distance =.7cm]{} edge (bb.center);
\node[emptyunit][above left of =bb, node distance =.7cm]{};
\end{tikzpicture}
\end{minipage}
=
\begin{minipage}{2cm}
\begin{tikzpicture}
\node[empty](aa){};
\node[empty](bb)[right of =aa]{} edge (aa.center);
\node[empty](cc)[right of =bb]{} edge (bb.center);
\node[unit](dd)[above left of =bb, node distance =.7cm]{} edge (bb.center);
\node[emptyunit][below left of =bb, node distance =.7cm]{};
\end{tikzpicture}
\end{minipage}
=
\begin{minipage}{2cm}
\begin{tikzpicture}
\node[empty](aa){};
\node[empty](bb)[right of =aa]{} edge (aa.center);
\node[empty](cc)[right of =bb]{} edge (bb.center);
\end{tikzpicture}
\end{minipage}
$$
$$
\text{Compatibility of $m$ and $\Delta$:\hspace{2em}}
\begin{minipage}{3cm}
\begin{tikzpicture}
\node[empty](A){};
\node[empty](B)[right of =A,node distance =.5cm]{};
\node[empty](C)[right of =B]{};
\node[empty](D)[right of =C]{};
\node[empty](E)[right of =D, node distance = .5cm]{};
\node[empty](F)[below of =A]{};
\node[empty](G)[right of =F, node distance=.5cm]{};
\node[empty](H)[right of =G]{};
\node[empty](I)[right of =H]{};
\node[empty](J)[right of =I,node distance=.5cm]{};
\draw (A.center) to (E.center);
\draw (F.center) to (J.center);
\draw (B.center) to[in=180,out=0] (I.center);
\draw (G.center) to[in=180,out=0] (D.center);
\end{tikzpicture}
\end{minipage}
=
\begin{minipage}{3cm}
\begin{tikzpicture}
\node[empty](A){};
\node[empty](B)[right of =A, node distance=3cm]{};
\node[empty](C)[below of =A]{};
\node[empty](D)[below of =B]{};
\node[empty](E)[right of=A]{};
\node[empty](G)[below of = E, node distance=.5cm]{};
\node[empty](H)[right of = G]{};
\draw (A.center) to[out=0, in=180] (G.center);
\draw (C.center) to[out=0, in=180] (G.center);
\draw (G.center) to (H.center);
\draw (H.center) to[out=0, in=180] (B.center);
\draw (H.center) to[out=0, in=180] (D.center);
\end{tikzpicture}
\end{minipage}
$$
$$
m\circ(\id\otimes S)\circ \Delta=\eta\circ\epsilon= m\circ(S\otimes\id)\circ\Delta:
$$
$$
\begin{minipage}{4.5cm}
\begin{tikzpicture}
\node[empty](aa){};
\node[empty](bb)[above right of =aa]{};
\node[antipode](cc)[right of =bb]{S};
\draw (aa.center) to[out=0,in=180] (cc);
\node[empty](dd)[below right of =aa]{};
\node[emptyantipode](ee)[right of =dd]{};
\draw (aa.center)to[out=0,in=180] (ee.center);
\node[empty] (ff)[left of =aa, node distance=.5cm]{} edge (aa.center);
\node[empty](gg)[right of=cc]{};
\node[empty](hh)[below right of =gg]{};
\draw (cc) to[out=0,in=180] (hh.center);
\draw (ee.center) to[out=0,in=180] (hh.center);
\node[empty] [right of=hh, node distance=.5cm]{} edge (hh.center);
\end{tikzpicture}
\end{minipage}
=
\begin{minipage}{3cm}
\begin{tikzpicture}
\node[empty](aa){};
\node[unit](bb)[right of =aa]{};
\draw (aa.center) to (bb);
\node[unit](cc)[right of =bb]{};
\node[empty](dd)[right of =cc]{};
\draw (dd.center) to (cc);
\end{tikzpicture}
\end{minipage}
=
\begin{minipage}{4.5cm}
\begin{tikzpicture}
\node[empty](aa){};
\node[empty](bb)[above right of =aa]{};
\node[emptyantipode](cc)[right of =bb]{};
\draw (aa.center) to[out=0,in=180] (cc.center);
\node[empty](dd)[below right of =aa]{};
\node[antipode](ee)[right of =dd]{S};
\draw (aa.center) to[out=0,in=180] (ee);
\node[empty] (ff)[left of =aa, node distance=.5cm]{} edge (aa.center);
\node[empty](gg)[right of=cc]{};
\node[empty](hh)[below right of =gg]{};
\draw (cc.center) to[out=0,in=180] (hh.center);
\draw (ee) to[out=0,in=180] (hh.center);
\node[empty] [right of=hh, node distance=.5cm]{} edge (hh.center);
\end{tikzpicture}
\end{minipage}
$$
$$
S^2=\id:\text{\hspace{2em}}
\begin{minipage}{2cm}
\begin{tikzpicture}
\node[empty](A){};
\node[antipode](B)[right of =A, node distance=.5cm]{$S$} edge (A);
\node[antipode](C)[right of =B]{$S$} edge(B);
\node[empty](D)[right of =C, node distance=.5cm]{} edge (C);
\end{tikzpicture}
\end{minipage}
=
\begin{minipage}{2cm}
\begin{tikzpicture}
\node[empty](A){};
\node[empty](B)[right of =A, node distance=.5cm]{};
\node[emptyantipode](C)[right of =B]{};
\node[empty](D)[right of =C, node distance=.5cm]{} edge (A);
\end{tikzpicture}
\end{minipage}
$$
$$
\text{Compatibility of $S$ and $\Delta$:\hspace{2em}}
\begin{minipage}{3.3cm}
\begin{tikzpicture}
\node[empty](A){};
\node[antipode](B)[right of=A]{S} edge (A);
\node[empty](C)[right of =B, node distance=.5cm]{};
\node[empty](D)[above right of=C]{};
\node[empty](E)[right of = D]{};
\node[empty](F)[below right of=C]{};
\node[empty](G)[right of =F]{};
\draw (B) to[in=180,out=0] (C.center);
\draw (C.center) to[in=180,out=0] (E.center);
\draw (C.center) to[in=180,out=0] (G.center);
\end{tikzpicture}
\end{minipage}
=
\begin{minipage}{3.3cm}
\begin{tikzpicture}
\node[empty](A){};
\node[empty](B)[right of=A]{};
\node[empty](C)[right of =B, node distance=.5cm]{};
\draw (A.center) to (C.center);
\node[empty](D)[above right of=C]{};
\node[antipode](E)[right of = D]{S};
\node[empty](F)[below right of=C]{};
\node[antipode](G)[right of =F]{S};
\draw (C.center) to[in=180,out=-60] (E);
\draw (C.center) to[in=180,out=60] (G);
\node[empty](H)[right of=E]{};
\draw (E) to (H.center);
\node[empty](I)[right of=G]{};
\draw (G) to (I.center);
\end{tikzpicture}
\end{minipage}
$$
$$
\text{Compatibility of $S$ and $m$:\hspace{2em}}
\begin{minipage}{3.3cm}
\begin{tikzpicture}
\node[empty](A){};
\node[antipode](B)[left of=A]{S};
\draw (B) to (A.center);
\node[empty](C)[left of =B, node distance=.5cm]{};
\draw (B) to (C.center);
\node[empty](D)[above left of=C]{};
\node[empty](E)[left of = D]{};
\node[empty](F)[below left of=C]{};
\node[empty](G)[left of =F]{};
\draw (C.center) to[in=0,out=180] (E.center);
\draw (C.center) to[in=0,out=180] (G.center);
\end{tikzpicture}
\end{minipage}
=
\begin{minipage}{3.3cm}
\begin{tikzpicture}
\node[empty](A){};
\node[empty](B)[left of=A]{};
\node[empty](C)[left of =B, node distance=.5cm]{};
\draw (C.center) to (A.center);
\node[empty](D)[above left of=C]{};
\node[antipode](E)[left of = D]{S};
\node[empty](F)[below left of=C]{};
\node[antipode](G)[left of =F]{S};
\draw (C.center) to[in=0,out=-120] (E);
\draw (C.center) to[in=0,out=120] (G);
\node[empty](H)[left of=E]{};
\draw (H.center) to (E);
\node[empty](I)[left of=G]{};
\draw (I.center) to (G);
\end{tikzpicture}
\end{minipage}
$$
$$
\text{Cocommutativity:\hspace{2em}}
\begin{minipage}{3cm}
\begin{tikzpicture}
\node[empty](aa){};
\node[empty](bb)[right of=aa]{};
\draw (bb.center) to (aa.center);
\node[empty](cc)[above right of =bb]{};
\node[empty](ee)[right of=cc]{};
\node[empty](dd)[below right of =bb]{};
\node[empty](ff)[right of=dd]{};
\draw (bb.center) to[out=0,in=180] (ee.center);
\draw (bb.center) to[out=0,in=180] (ff.center);
\end{tikzpicture}
\end{minipage}
=
\begin{minipage}{3cm}
\begin{tikzpicture}
\node[empty](aa){};
\node[empty](bb)[right of=aa]{};
\draw (bb.center) to (aa.center);
\node[empty](cc)[above right of =bb]{};
\node[empty](ee)[right of=cc]{};
\node[empty](dd)[below right of =bb]{};
\node[empty](ff)[right of=dd]{};
\draw (bb.center) to[out=-60,in=180] (ee.center);
\draw (bb.center) to[out=60,in=180] (ff.center);
\end{tikzpicture}
\end{minipage}
$$
\caption{Graphical depictions of the defining relations for cocommutative Hopf algebras.}
\label{graph-calc}
\end{figure}

By iteratively composing product and coproduct we define maps $m^{k}\colon H^{\otimes (k+1)}\to H$ and $\Delta^{k}\colon H\to H^{\otimes (k+1)}$.
By convention $m^{-1}=\eta$ and $\Delta^{-1}=\epsilon$. We will often use a type of Sweedler notation for the coproduct: $\Delta^{k-1}(h)=\sum h_{(1)}\otimes \cdots\otimes h_{(k)}$, often dropping the summation sign to just  $h_{(1)}\otimes \cdots\otimes h_{(k)}$.


Here are some examples of cocommutative Hopf algebras to keep in mind.
\begin{examples}\
\begin{enumerate}
\item If $G$ is a group, the group algebra $\F[G]$ is a Hopf algebra with the following operations defined on group elements and extended linearly:
\begin{center}
$m(g_1\otimes g_2)=g_1g_2$,\,\,
$\Delta(g)=g\otimes g$,\,\,
$S(g)=g^{-1}$,\,\,
$\epsilon (g)=1\in\F$.
\end{center}

\item If $\mathfrak g$ is a Lie algebra, then the universal enveloping algebra $U(\mathfrak g)$ is defined as the quotient of the free associative algebra $T(\mathfrak g)$ generated by $\mathfrak g$ by the relations $[X,Y]=XY-YX$. The operations are defined on products of Lie algebra elements $X_I=X_{i_1}\ldots X_{i_n}$ as follows:\\
$m(X_I\otimes Y_J)=X_IY_J$,\,\,
$\Delta(X_I)=\sum X_{I'}\otimes X_{I''}$ where the sum is over all pairs of index sets that shuffle together to give $I$,\,\,
 $S(X_1\ldots X_n)=(-1)^{n} X_n\ldots X_1$,\,\,
$\epsilon (X_I)=0$ on nontrivial products of primitive elements, and $\epsilon(1)=1$.
\item The two extreme cases of universal enveloping algebras are as follows.
\begin{enumerate}
\item If $\mathfrak g=\sL(V)$ is the free Lie algebra generated by $V$, then $U(\mathfrak g)=T(V)$.
\item If $\mathfrak g$ is abelian, then $U(\mathfrak g)$ is the symmetric algebra $S(\mathfrak g)$.
\end{enumerate}
\item
Finally, we can combine the first two cases. Let $\mathfrak{g}$ be a Lie algebra and $G$  a group acting on $\mathfrak{g}$ by automorphisms. We can form a semidirect product%
\footnote{Sometimes this is called the smash product.}
$\F[G] \ltimes U(\mathfrak{g})$ which as a set is just the tensor product but the multiplication is twisted by the action. A result 
known as the Cartier-Kostant-Milnor-Moore Theorem (for example, as stated in~\cite[Thm 1.1]{And} )
asserts that for an algebraically closed field of characteristic $0$, every cocommutative Hopf algebra is of this form,
see~\cite[sec 7]{MM}, \cite[Thm 2]{Ca}, \cite[Th. 8.1.5,13.0.1]{Sw}, \cite[Th 3.3]{Ko}.
\end{enumerate}
\end{examples}

An element $h\in H$ is said to be \emph{primitive} if $\Delta(h) = h \otimes 1 + 1 \otimes h$.
The Hopf algebra axioms imply that $\epsilon(h)=0$ and $S(h) = -h$ for any primitive element $h$.
Also the commutator $[h_1,h_2]=h_1h_2-h_2h_1$ is a primitive element if both $h_1$ and $h_2$ are primitive, i.e.,
the set of primitive elements forms a Lie algebra.

A Hopf algebra $H$ is said to be of \emph{primitively generated} 
if $H$ is generated as an algebra by primitive elements. It is clear that any primitively generated Hopf algebra  is cocommutative.
As a special case of the Cartier-Kostant-Milnor-Moore theorem, such a Hopf algebra can be identified with $U(\mathfrak{g})$, where $\mathfrak{g}$ is the set of primitive elements.


\subsection{The PBW isomorphism} 


For any Lie algebra $\mathfrak{g}$ the universal enveloping algebra $U(\mathfrak g)$ has a filtration
$$
U_0 \subset U_1 \subset U_2 \subset \cdots \subset U_n \subset \cdots
$$
where $U_0 = \F$, $U_1 = \F \oplus \mathfrak{g}$ and $U_n$ is the span of all products of at most $n$ elements from $\mathfrak{g}$.
Notice that this filtration is preserved by $m$, $\Delta$ and $S$, i.e.,
$$
m(U_i\otimes U_j) \subset U_{i+j},
\quad
\Delta(U_i) \subset  \sum_{p+q=i} U_p \otimes U_{q},
\quad \mbox{and} \quad
S(U_i) \subset U_i.
$$

The well-known PBW theorem gives that $U_{i}/U_{i-1}$ is isomorphic to $\Sym^i(\mathfrak{g})$ -- the map
$\sigma_i :  \Sym^i(\mathfrak{g}) \to U_{i}$ given by
$$
\sigma_i(X_1\ldots X_i)=\frac{1}{|\sym{i}|}\sum_{\alpha\in\sym{i}} X_{\alpha(1)}\ldots X_{\alpha(i)}
$$
induces the isomorphism between the vector spaces $\Sym^i(\mathfrak{g})$ and $U_{i}/U_{i-1}$.
The inverse of the map $\sigma_i$ is not difficult to write down explicitly: for any $h \in U_i$
$$
(\id - \eta\epsilon)^{\otimes i} \circ \Delta^{i-1} (h) \in U^{\otimes i}.
$$
In fact the image lies inside $U_1^{\otimes i} \subset U^{\otimes i}$
because  $\Delta^{i-1} (h) \in \sum_{p_1 + \cdots + p_i =i} U_{p_1} \otimes \cdots \otimes U_{p_i}$
and  $(\id - \eta\epsilon)^{\otimes i}$ is zero on the terms in the above sum which contain $U_0$.
Using the projection $U_1 \twoheadrightarrow \mathfrak{g}$ this becomes an element of $\mathfrak{g}^{\otimes i}$. Since the
comultiplication is cocommutative this is a symmetric tensor and can be viewed as element of $\Sym^i(\mathfrak{g})$.
This leads to a map $\pi_i:U_i \to \Sym^i(\mathfrak{g})$ with kernel $U_{i-1}$ which is the inverse of $\sigma_i$ (up to normalization constant).
It is important to note that the above formula is only valid for elements in $U_i$ and not for arbitrary elements in
$U(\mathfrak g)$.

The maps $\{\sigma_i\}_{i \geq 0}$ can be combined in a map $\sigma\colon \Sym(\mathfrak g)\to U(\mathfrak g)$
which becomes a vector space isomorphism. A direct computation shows that $\sigma$ preserves the comultuplication
$\Delta$ and is an isomorphism of coalgebras.

The following claim is well known:
\begin{claim}
\label{cl:powers}
The image of $\sigma_i$ is generated as vector space by elements $\{X^i = \sigma_i(X^i)\}$ for $X\in \mathfrak{g}$.
\end{claim}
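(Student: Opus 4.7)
The plan is to deduce this from the classical polarization identity for symmetric tensors. Since $\sigma_i$ is a linear isomorphism from $\Sym^i(\mathfrak{g})$ onto its image in $U_i$, it suffices to show that $\Sym^i(\mathfrak{g})$ is spanned as a vector space by the pure $i$-th powers $\{X^i : X \in \mathfrak{g}\}$, and then transfer this via $\sigma_i$. A direct check from the formula
\[
\sigma_i(X^i) = \frac{1}{i!}\sum_{\alpha \in \sym{i}} X \cdot X \cdots X = X^i \in U_i
\]
shows that $\sigma_i$ sends $X^i$ (the $i$-th power in the symmetric algebra) to $X^i$ (the $i$-th power in $U(\mathfrak{g})$), so the transfer is immediate once the symmetric algebra statement is established.

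First I would recall the polarization identity. For any $X_1, \ldots, X_i$ in $\mathfrak{g}$, working in $\Sym(\mathfrak{g})$ and using that we are in characteristic $0$, one has
\[
i!\,X_1 X_2 \cdots X_i \;=\; \sum_{\varnothing \neq S \subseteq \{1,\ldots,i\}} (-1)^{i-|S|}\Bigl(\sum_{j \in S} X_j\Bigr)^{\!i}.
\]
This is a standard computation: expand each $(\sum_{j \in S} X_j)^i$ by the multinomial theorem and check via inclusion-exclusion that only the fully mixed monomial $X_1 \cdots X_i$ survives, with coefficient $i!$. In particular, every element of $\Sym^i(\mathfrak{g})$ is a linear combination of pure powers $Y^i$ with $Y \in \mathfrak{g}$.

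Next I would apply $\sigma_i$, which is linear, to both sides of this identity. Using $\sigma_i(Y^i) = Y^i$ in $U_i$ as noted above, we obtain
\[
i!\,\sigma_i(X_1 \cdots X_i) \;=\; \sum_{\varnothing \neq S \subseteq \{1,\ldots,i\}} (-1)^{i-|S|}\Bigl(\sum_{j \in S} X_j\Bigr)^{\!i}
\]
inside $U_i$. Since the image of $\sigma_i$ is spanned by elements of the form $\sigma_i(X_1 \cdots X_i)$ (as the $X_j$ range over $\mathfrak{g}$), this identity exhibits a spanning set consisting entirely of $i$-th powers of elements of $\mathfrak{g}$, which is what the claim asserts. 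There is no real obstacle here; the only thing to be careful about is the division by $i!$, which is the reason the argument needs characteristic $0$, and the fact that $\sigma_i$ must genuinely be linear (not just a set map), which is clear from its definition.
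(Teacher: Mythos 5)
Your proof is correct. The paper offers no argument of its own here --- it simply labels the claim ``well known'' --- and the polarization identity you invoke is exactly the standard way to prove it: the inclusion--exclusion computation showing that only the fully mixed monomial survives is right, the division by $i!$ is legitimate since $\F$ has characteristic $0$, and the identification $\sigma_i(X^i)=X^i$ follows immediately from the symmetrization formula, so pushing the identity through the linear map $\sigma_i$ closes the argument.
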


\begin{lemma}
\label{lm:class2}
If the Lie algebra satisfies the equation $[[X,Y],X] = [[X,Y],Y]=0$, in particular if $\mathfrak{g}$ is nilpotent of class $2$ then there exists constants $c_{n,k,i}$ and $d_{n,k,i}$ such that
$$
X^nY^k=\sigma_n(X^n)Y^k = \sigma_{n+k}(X^nY^k) + \sum_{i >0} c_{n,k,i} \sigma_{n+k-i}(X^{n-i} Y^{k-i}[X,Y]^i)
$$
and
$$
Y^kX^n=Y^k\sigma_n(X^n) = \sigma_{n+k}(X^nY^k) + \sum_{i >0} d_{n,k,i} \sigma_{n+k-i}(X^{n-i} Y^{k-i}[X,Y]^i).
$$
\end{lemma}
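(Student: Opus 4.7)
The plan is to install a bigrading on the Lie subalgebra $\mathfrak{h}\subseteq\mathfrak{g}$ generated by $X$ and $Y$ and let PBW do the rest. Since $U(\mathfrak{h})\hookrightarrow U(\mathfrak{g})$ by the functoriality of PBW, it suffices to prove both identities inside $U(\mathfrak{h})$. Under the hypothesis $[[X,Y],X]=[[X,Y],Y]=0$, the subalgebra $\mathfrak{h}$ is spanned by $X$, $Y$, and $Z:=[X,Y]$, so $\dim\mathfrak{h}\leq 3$.

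Declare $\deg X=(1,0)$, $\deg Y=(0,1)$, and $\deg Z=(1,1)$. The defining relations $[X,Y]=Z$, $[X,Z]=0$, $[Y,Z]=0$ are all bihomogeneous for this assignment, so $\mathfrak{h}$ becomes a bigraded Lie algebra. (In characteristic $0$ the hypothesis forces either $Z=0$ or $\{X,Y,Z\}$ linearly independent, so the assignment is consistent on a basis.) This bigrading extends to bigradings on $\Sym(\mathfrak{h})$ and $U(\mathfrak{h})$, and the PBW symmetrization $\sigma\colon\Sym(\mathfrak{h})\to U(\mathfrak{h})$ preserves bidegree because it is a linear combination of permutations of letters.

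By PBW, $\{\sigma_{a+b+c}(X^aY^bZ^c):a,b,c\geq 0\}$ is a vector-space basis of $U(\mathfrak{h})$, and each basis element is bihomogeneous of bidegree $(a+c,\,b+c)$. The element $X^nY^k\in U(\mathfrak{h})$ is bihomogeneous of bidegree $(n,k)$, so its unique expansion in this basis involves only terms with $a+c=n$ and $b+c=k$. Setting $i=c$ forces $(a,b,c)=(n-i,\,k-i,\,i)$ with $0\leq i\leq\min(n,k)$, which is exactly the shape in the statement:
$$X^nY^k=\sum_{i=0}^{\min(n,k)}c_{n,k,i}\,\sigma_{n+k-i}(X^{n-i}Y^{k-i}Z^i).$$
To check $c_{n,k,0}=1$, I project both sides to $U_{n+k}/U_{n+k-1}\cong\Sym^{n+k}(\mathfrak{h})$: the terms with $i\geq 1$ lie in $U_{n+k-1}$ and vanish, while both $X^nY^k$ and $\sigma_{n+k}(X^nY^k)$ project to the symmetric monomial $X^nY^k$. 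The identity for $Y^kX^n$ follows by the identical bigrading argument, since $Y^kX^n$ also has bidegree $(n,k)$. The opening equalities $X^nY^k=\sigma_n(X^n)Y^k$ and $Y^kX^n=Y^k\sigma_n(X^n)$ are immediate from Claim \ref{cl:powers}.

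The approach is essentially free of obstacles: the bigrading reduces the existence of constants to dimension counting in each bidegree. If one also wanted explicit values, one could apply Baker--Campbell--Hausdorff $e^{aX}e^{bY}=e^{aX+bY+\frac{ab}{2}Z}$ (valid in the class-$2$ setting) together with $\sigma(e^W)=e^W$ for $W\in\mathfrak{h}$, obtaining $c_{n,k,i}=\binom{n}{i}\binom{k}{i}i!/2^i$ and $d_{n,k,i}=(-1)^i c_{n,k,i}$; but only existence is asserted in the lemma.
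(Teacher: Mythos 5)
Your proposal is correct and follows essentially the same route as the paper: reduce to the (at most) three--dimensional Heisenberg subalgebra, use the bigrading on $U(\mathfrak{h})$ together with PBW to see that the symmetrized monomials $\sigma_{n+k-i}(X^{n-i}Y^{k-i}[X,Y]^i)$ span the bidegree-$(n,k)$ component, and identify the leading coefficient as $1$ by passing to the top associated graded piece (the paper phrases this as ``abelianizing''). Your extra care with the degenerate case $[X,Y]=0$ and the explicit BCH computation of the constants are correct additions, but the core argument is the same.
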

\begin{proof}
The subalgebra generated by $X$ and $Y$ is $3$ dimensional Heisenberg Lie algebra $\mathfrak{h} =\mathrm{span}\{X,Y,[X,Y]\}$ and it is enough to do all computations in $U(\mathfrak{h})$. This algebra is multigraded and the elements $\sigma_{n+k-i}(X^{n-i} Y^{k-i}[X,Y]^i)$ form a basis of the graded component of multidegree  $(n,k)$, because $X^{n-i} Y^{k-i}[X,Y]^i$ form a basis of the graded component of $\Sym(\mathfrak{h})$.
Therefore $X^nY^k$ and $Y^kX^n$ can be expressed as linear combinations of the above elements.
Thus, we only need to show that the coefficient in front of $\sigma_{n+k}(X^nY^k)$ is $1$ which follows
by abelianizing.
\end{proof}

\section{The operad $H\cO$}
\label{sec:Hopf_operad}
Suppose $H$ is a cocommutative Hopf algebra and $\cO$ is an operad with unit (in the category of $\F$-vector spaces). (For background on operads see \cite{operad}.) We let $\cO[n]$ denote the vector space spanned by operad elements with $n$ inputs and one output, $n$ being referred to as the \emph{arity}. If $\cO$ is cyclic, we let $\cO\arity{n}=\cO[n-1]$ as an $\sym{n}$-module, where $\sym{n}$ denotes the symmetric group.

Regard $H$ as an operad with elements only of arity $1$ and operad composition given by algebra multiplication. The antipode $S$ turns $H$ into a cyclic operad: the $\sym{2}$ action sends $h$ to $S(h)$.%
\footnote{Notice that we do not need the comultiplication on $H$ to turn it into a cyclic operad.}

\begin{definition}\
\begin{enumerate}
\item Let $\cO_1$ and $\cO_2$ be operads with unit. Define $\cO_1*\cO_2$ to be the operad \emph{freely generated by $\cO_1$ and $\cO_2$}. This is defined to be the operad consisting of trees with vertices of valence $\geq 2$ labeled by elements of $\cO_1$ or $\cO_2$. Composing two elements of $\cO_i$ for $i=1,2$ along a tree edge is considered the same element of $\cO_1*\cO_2$, and the units of $\cO_1$ and $\cO_2$ are identified and equal to the unit of $\cO_1*\cO_2$.
\item Let $H\cO$ be the quotient of $H*\cO$ by the relation that $h$ commutes with an element of $\cO$ via the comultiplication map as in the figure below. (The fact that $1_\cO=1_H$ is also included for emphasis.)
\end{enumerate}
\end{definition}

\begin{tikzpicture}
\node[empty](aa){};
\node[operad](bb)[left of=aa]{$1_\cO$} edge (aa);
\node[empty](cc)[left of=bb]{}  edge (bb);

\node[empty](dd)[left of=cc]{};
\node[hopf](ee)[left of=dd]{$\,\,1_H$}  edge (dd);
\node[empty](ff)[left of=ee]{}  edge (ee);

\path(cc.center) to node[anchor=mid]{$=$} (dd.center);

\node[break](br)[right of=aa]{};

\node[empty](a)[right of=br]{};
\node[hopf](b)[right of=a]{$\,\,h$} edge (a);
\node[operad](c)[right of=b, node distance=1.3cm]{$o$} edge (b);
\node[empty](d)[right of=c]{} edge (c);
\node[empty](e)[above right of=c]{} edge (c);
\node[empty](f)[below right of=c]{} edge (c);

\node[empty](a')[right of=d]{};
\node[operad](b')[right of=a']{$o$} edge (a');
\node[hopf](c')[right of=b', node distance=1.8cm]{$\,\,h_{(2)}$} edge (b');
\node[empty](d')[right of=c']{} edge (c');
\node[hopf](e')[above of=c']{$\,\,h_{(1)}$}; \draw (b') to[out=45,in=180] (e');
\node[empty](f')[right of=e']{} edge (e');
\node[hopf](g')[below of=c']{$\,\,h_{(3)}$}; \draw (b') to[out=-45,in=180] (g');
\node[empty](h')[right of=g']{} edge (g');

\path(d.center) to node[anchor=mid]{$=$} (a'.center);

\end{tikzpicture}

Note the use of Sweedler notation hides the fact that the coproduct is actually a sum of pure tensors.
The co-commutativity of the Hopf algebra $H$ easily implies

\begin{claim}
The arity $n$ part of the operad $H\cO$ is
$$
H\cO[n] =\cO[n] \otimes H^{\otimes n},
$$
where the symmetric group $\sym{n}$ acts on both tensor factors simultaneously.
\end{claim}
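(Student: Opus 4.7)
The plan is to exhibit mutually inverse $\sym{n}$-equivariant linear maps between $\cO[n]\otimes H^{\otimes n}$ and $H\cO[n]$. In the obvious direction, define $\phi : \cO[n]\otimes H^{\otimes n} \to H\cO[n]$ by sending $o\otimes h_1\otimes\cdots\otimes h_n$ to the operadic composition of $o$ with $h_i$ grafted onto the $i$-th leg of $o$; this is manifestly $\sym{n}$-equivariant and well defined.

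The inverse is produced by a normal-form algorithm on the free operad $H*\cO$. A general element of $H*\cO[n]$ is a linear combination of trees with $n$ numbered leaves whose internal vertices are labelled by elements of $H$ (univalent) or $\cO$. One repeatedly applies the defining sliding relation of $H\cO$ to move each $H$-vertex across any $\cO$-vertex adjacent to it on the root side, distributing the $H$-element across the remaining legs of that $\cO$-vertex via the iterated coproduct $\Delta^{k-1}$. After finitely many slides every $H$-vertex sits on a leaf edge, at which point one collapses the remaining $\cO$-subtree into a single element of $\cO[n]$ by operad composition and combines the stack of $H$-vertices on each leaf edge into a single $H$-element by $m$. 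The resulting normal form is precisely in the image of $\phi$. Define $\psi : H\cO[n]\to \cO[n]\otimes H^{\otimes n}$ by reading off the data $(o;h_1,\ldots,h_n)$ from the normal form.

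Granted that $\psi$ is well defined, the identity $\psi\circ\phi=\id$ is immediate because $\phi(o\otimes h_1\otimes\cdots\otimes h_n)$ is already in normal form, while $\phi\circ\psi=\id$ holds because every reduction step used to compute $\psi$ is an equality in $H\cO[n]$. The $\sym{n}$-equivariance of $\psi$ follows from that of the algorithm.

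The principal obstacle is showing that $\psi$ is indeed well defined, i.e.\ that the normal form is independent of the order in which slides, operad compositions, and multiplications are performed, and that it descends from $H*\cO[n]$ to the quotient $H\cO[n]$. Descent is automatic because the sliding relation is built into the algorithm. Order-independence is a diagrammatic check in the graphical calculus using the axioms collected in Figure~\ref{graph-calc}: coassociativity of $\Delta$ ensures that sliding a single $h$ across a string of $\cO$-vertices yields a consistent answer; compatibility of $m$ with $\Delta$ ensures that multiplying a chain of $H$-vertices before versus after sliding past an $\cO$-vertex agrees; cocommutativity of $\Delta$ together with the $\sym{n}$-equivariance of operad composition ensures that the tensor factors of $\Delta^{k-1}(h)$ may be distributed among the legs of an $\cO$-vertex without ambiguity; and operad associativity governs the final $\cO$-subtree collapse. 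The proof is thus essentially a graphical rewriting argument in the sense of Joyal--Street.
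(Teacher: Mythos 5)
Your argument is correct in outline, but it establishes injectivity by a genuinely different route than the paper. Both proofs begin identically: the sliding relation pushes every $H$-label to the leaves, so the grafting map $\phi\colon \cO[n]\otimes H^{\otimes n}\to H\cO[n]$ is surjective. For the converse the paper sets up no rewriting system at all; guided by the normal form of a composite, it defines an operad structure directly on the collection $\{\cO[n]\otimes H^{\otimes n}\}_{n}$ (compose the $\cO$-parts, redistribute the $H$-parts via $\Delta$ and $m$) and then invokes the freeness of $H*\cO$: the universal property yields a morphism out of $H*\cO$ into this operad, it kills the sliding relations, and the induced map splits $\phi$, so the surjection is an isomorphism. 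You instead argue that your leaf-ward rewriting is terminating and locally confluent, so every element of $H\cO[n]$ has a unique normal form --- a diamond-lemma argument. The two routes bury the same Hopf-algebra verifications (coassociativity, compatibility of $m$ with $\Delta$, cocommutativity) in different places: the paper in the unproved assertion that its composition law is an associative, equivariant operad structure respecting the relations; you in the unproved resolution of the overlaps between your rewriting rules. The paper's route buys well-definedness of the inverse for free from the universal property, with no confluence analysis; yours buys an explicit, elementary algorithm for the inverse that never needs the operad structure on $\cO[n]\otimes H^{\otimes n}$. One sentence of yours undersells the work: ``descent is automatic'' is not separate from confluence but a consequence of it, since two trees equal in $H\cO[n]$ are joined by a zig-zag of relation applications in both directions, and only confluence (plus termination) guarantees that each such step preserves the normal form.
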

\begin{proof}
Using that the elements of $H$ and the operad elements ``commute'' one can express any element from $H\cO$ as an element of the operad $\cO$ followed by several elements from $H$ as in the following example
\begin{center}
\begin{tikzpicture}

\node[empty](a)[]{};
\node[hopf](b)[right of=a]{$\,h\,$} edge (a);
\node[operad](c)[right of=b, node distance=1.3cm]{$o$} edge (b);
\node[hopf](d)[right of=c,node distance=1.3cm]{$\,h'\,$} edge (c);
\node[empty](e)[below right of=c]{} edge (c);
\node[operad](f)[right of=d, node distance=1.3cm]{$o'$} edge (d);
\node[empty](g)[right of=f]{} edge (f);
\node[empty](h)[above right of=f]{} edge (f);

\node[empty](a')[right of=g]{};
\node[operad](b')[right of=a']{$\,\,o \circ_{1} o'\,\,$} edge (a');
\node[hopf](c')[right of=b', node distance=2.5cm]{$\,\,h_{(2)}h_{(2)}'$} edge (b');
\node[empty](d')[right of=c', node distance=1.3cm]{} edge (c');
\node[hopf](e')[above of=c', node distance=1.2cm]{$\,\,h_{(1)}h'_{(1)}$}; \draw (b') to[out=45,in=180] (e');
\node[empty](f')[right of=e', node distance=1.3cm]{} edge (e');
\node[hopf](g')[below of=c', node distance=1.2cm]{$\,\,\,\,\,h_{(3)}\,\,\,\,\,$}; \draw (b') to[out=-45,in=180] (g');
\node[empty](h')[right of=g', node distance=1.3cm]{} edge (g');

\path(g) to node{$=$} (a');

\end{tikzpicture}
\end{center}
Guided by the above construction, one can easily define an operad structure on the $S$-module with arity $n$ part $\cO[n]\otimes H^{\otimes n}$. That means we have a surjective morphism of operads $\cO[n]\otimes H^{\otimes n}\twoheadrightarrow H\cO[n]$. By freeness this map must be an isomorphism.
\end{proof}

\begin{remark}
If the algebra $H$ is not cocommutative then this action will not be well defined. A similar construction can be made for any Hopf algebra and non-symmetric operad $\cO$.
\end{remark}

As we mentioned earlier, the antipode gives rise to a cyclic operad structure on $H\subset H\cO$. If $\cO$ is also a cyclic operad, then we get an induced cyclic operad structure on $H\cO$.

%
%
%

It is worthwhile to spell this construction out in a little more detail for $H\Lie$ as follows. As before, we regard the $H$ part of the operad as being a two-valent vertex with one input leaf, one output leaf and $H$-labeled inside.
Then $H\Lie\arity{n}$ is generated by  trees with vertices of valence $\leq 3$, with an ordering of the edges at each bivalent vertex, a cyclic ordering of the edges at the  trivalent vertices, and the bivalent vertexes are labeled with elements of the Hopf algebra $H$. These trees satisfy the following relations:

\begin{enumerate}
\item (Multiplication)
\begin{tikzpicture}[baseline=(eq.south)]
\node[empty](a){};
\node[hopf](b)[right of=a]{$\,\,h_1$} edge (a);
\node[hopf](c)[right of=b,node distance=1.9cm]{$\,\,h_2$} edge (b);
\node[empty](d)[right of=c]{} edge (c);

\node[empty](e)[right of=d]{};
\node[hopf](f)[right of=e]{$\,\,h_1h_2$} edge (e);
\node[empty](g)[right of=f,node distance=1.3cm]{} edge (f);

\path (d.center) to node[anchor=mid](eq){$=$} (e.center);
\end{tikzpicture}

\item (Antipode)
\begin{tikzpicture}[baseline=(eq.south)]
\node[empty](a){};
\node[hopf](b)[right of=a]{$\,\,h\,$} edge (a);
\node[empty](c)[right of=b]{} edge (b);

\node[empty](e)[right of=c]{};
\node[hopf,](f)[right of=e,node distance=2.5cm]{$\,\,S(h)$};
\node[empty](fup)[above of=f,node distance=.5cm]{};
\node[empty](fdown)[below of=f,node distance=.3cm]{};
\node[empty](fldown)[left of=fdown,node distance=1.0cm]{};
\node[empty](g)[right of=f,node distance=2cm]{};
\draw (e) to[out=0,in=180] (fup) to[out=0,in=20] (f.east);
\draw (f.west) to[out=190,in=150] (fldown) to[out=330,in=180] (g);

\path (c) to node(eq){$=$} (e);
\end{tikzpicture}

\item (Removal of the Identity)
\begin{tikzpicture}[baseline=(eq.south)]
\node[empty](a){};
\node[hopf](b)[right of=a]{$\,\,1_H$} edge (a);
\node[empty](c)[right of=b,node distance=1.3cm]{} edge (b);

\node[empty](e)[right of=c]{};
\node[empty](g)[right of=e,node distance=2cm]{} edge (e);

\path (c) to node(eq){$=$} (e);
\end{tikzpicture}

\item (Comultiplication)
\begin{tikzpicture}[baseline=(eq.south)]
\node[empty](a){};
\node[hopf](b)[right of=a,node distance=1.3cm]{$\,\,h$} edge (a);
\node[empty](c)[right of=b]{};
\draw (c.center) to (b);
\node[empty](d)[right of=c]{};
\node[empty](dd)[right of=d,node distance=0.1cm]{};
\node[empty](e)[above of=dd,node distance=0.4cm]{};
\node[empty](f)[below of=dd,node distance=0.4cm]{};
\draw (c.center) to[out=45,in=180] (e.center);
\draw (c.center) to[out=-45,in=180] (f.center);

\node[empty](a')[right of=dd]{};
\node[empty](b')[right of=a',node distance=0.6cm]{};
\draw (a'.center) to (b'.center);
\node[empty](c')[right of=b', node distance=1.3cm]{};
\node[hopf](e')[above of=c',node distance=0.6cm]{$\,\,h_{(1)}$};
\draw (b'.center) to[out=45,in=180] (e');
\node[empty](f')[right of=e', node distance=1.3cm]{} edge (e');
\node[hopf](g')[below of=c',node distance=0.6cm]{$\,\,h_{(2)}$};
\draw (b'.center) to[out=-45,in=180] (g');
\node[empty](h')[right of=g',node distance=1.3cm]{} edge (g');

\path(dd) to node(eq){$=$} (a');
\end{tikzpicture}

\item (IHX)
$\begin{minipage}{1.732cm}
\begin{tikzpicture} [rotate=90]
\draw (0,0) -- (-.35,.6062);
\draw[densely dotted] (-.35,.6062)--(-.5,.866);
\draw (0,0) -- (-.35,-.6062);
\draw[densely dotted] (-.35,-.6062)-- (-.5,-.866);
\draw (0,0) -- (1,0);
\draw (1,0) -- (1.35,.6062);
\draw[densely dotted] (1.35,.6062)--(1.5,.866);
\draw (1,0) -- (1.35,-.6062);
\draw[densely dotted] (1.35,-.6062)--(1.5,-.866);
\end{tikzpicture}
\end{minipage}
=
\begin{minipage}{2cm}
\begin{tikzpicture}
\draw (0,0) -- (-.35,.6062);
\draw[densely dotted] (-.35,.6062)--(-.5,.866);
\draw (0,0) -- (-.35,-.6062);
\draw[densely dotted] (-.35,-.6062)-- (-.5,-.866);
\draw (0,0) -- (1,0);
\draw (1,0) -- (1.35,.6062);
\draw[densely dotted] (1.35,.6062)--(1.5,.866);
\draw (1,0) -- (1.35,-.6062);
\draw[densely dotted] (1.35,-.6062)--(1.5,-.866);
\end{tikzpicture}
\end{minipage}
-
\begin{minipage}{1.732cm}
\begin{tikzpicture}
\draw (0,0) -- (.4,.231);
\draw (.6,.346) -- (1.35,.78);
\draw[densely dotted] (1.35,.78) -- (1.5,.866);
\draw (0,0) -- (-.35,-.6062);
\draw[densely dotted] (-.35,-.6062)-- (-.5,-.866);
\draw (0,0) -- (1,0);
\draw (1,0) -- (-.35,.78);
\draw[densely dotted] (-.35,.78)--(-.5,.866);
\draw (1,0) -- (1.35,-.6062);
\draw[densely dotted] (1.35,-.6062)--(1.5,-.866);
\end{tikzpicture}
\end{minipage}
$

\item (AntiSymmetry)
\begin{tikzpicture}[baseline=(eq.south)]
\node[empty](a){};
\node[empty](c)[right of=a,node distance=0.6cm]{};
\draw (c.center) to (a.center);
\node[empty](d)[right of=c]{};
\node[empty](e)[above right of=c]{};
\draw (c.center) to[out=40,in=180] (e.center);
\node[empty](f)[below right of=c]{};
\draw (c.center) to[out=-40,in=180] (f.center);

\node[empty](a')[right of=d]{};
\node[empty](b')[right of=a',node distance=0.6cm]{};
\draw (b'.center) to (a'.center);
\node[empty](c')[right of=b', node distance=0.3cm]{};
\node[empty](cup')[above of=c',node distance=0.3cm]{};
\node[empty](cdown')[below of=c',node distance=0.3cm]{};
\node[empty](d')[right of=c', node distance=1.5cm]{};
\node[empty](dup')[above of=d',node distance=0.7cm]{};
\node[empty](ddown')[below of=d',node distance=0.7cm]{};
\draw (b'.center) to[out=60,in=135] (ddown'.center);
\draw (b'.center) to[out=-65,in=-135] (dup'.center);

\node(zero)[right of=d']{$0$};

\path(d) to node{$+$} (a');

\path(d') to node(eq){$=$} (zero);
\end{tikzpicture}
\end{enumerate}

%
%

Given a vector space $V$, define a cyclic operad $\Lie_V\arity{n}$ which is spanned by elements of $\Lie$ with $n$ numbered univalent vertices representing the input/output slots of the operad, and the rest of the univalent vertices labeled by elements of $V$.
Here we have a choice how to define the spaces  $\Lie_V\arity{0}$ and $\Lie_V\arity{1}$\footnote{Notice that the arity $n$ part of the $\cO$ when considered as cyclic operad is denoted by
{$\cO\arity{n+1}$}.
}.
The above definition gives that these spaces are not empty:
\begin{enumerate}
\item $\Lie_V\arity{1}\cong\sL(V)$ is the free Lie algebra generated by $V$.
\item  If $V$ is symplectic, then $\Lie_V\arity{0}$ (as a vector space) is Kontsevich's Lie algebra for the operad $\Lie$ with coefficients in $V$.
\end{enumerate}
Another possible choice leads to the reduced operad $\overline{\Lie_V}$ where the spaces $\Lie_V\arity{0}$ and $\Lie_V\arity{1}$ are empty.\footnote{Since the arity $-1$ part of a cyclic operad is not involved in any compositions, the exact definition of $\Lie_V\arity{0}$ is irrelevant. However this is not the case for arity $0$ part.}

Notice that arity $1$ part is also highly nontrivial:  $\Lie_V\arity{2} \cong T(V)$ as can be seen from the following picture.  The action of $\sym{2}$ is as the antipode on $T(V)$.
\begin{center}
\begin{tikzpicture}

\node[empty](a){};
\node[empty](b)[right of=a]{};
\draw (a.center) to (b.center);
\node[empty](b')[below right of=b]{$v_1$} edge (b.center);
\node[empty](c)[right of=b]{};
\draw (b.center) to (c.center);
\node[empty](c')[below right of=c]{$v_2$} edge (c.center);
\node[empty](d)[right of=c]{};
\draw (c.center) to (d.center);
\node[empty](d')[below right of=d]{$v_3$} edge (d.center);
\node[empty](e)[right of=d]{};
\draw (d.center) to (e.center);
\node[empty](e')[below right of=e]{$v_4$} edge (e.center);
\node[empty](f)[right of=e]{};
\draw (e.center) to (f.center);

\node[empty](x)[right of=f,node distance=2cm]{$v_1v_2v_3v_4$};

\path(f) to node{$\rightsquigarrow$} (x);
\end{tikzpicture}
\end{center}

This observation gives the following

\begin{proposition}
\label{prop:Lie-iso}
Let $H=T(V)$ be the tensor (Hopf) algebra. Then we have $H\Lie=  \overline{\Lie_V}$.
\end{proposition}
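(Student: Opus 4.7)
The plan is to construct a morphism of cyclic operads $\phi \colon H\Lie \to \overline{\Lie_V}$ and show it is an isomorphism. First I would define $\phi$ on the generating suboperads of $H\Lie$: on $\Lie$, send a Lie tree to the same tree regarded as an element of $\overline{\Lie_V}$ with no $V$-labeled leaves; on $H=T(V)$, use the arity-$2$ caterpillar identification already exhibited in the paper, sending $v_1 v_2\cdots v_k$ to the caterpillar tree in $\overline{\Lie_V}\arity{2}$ with $V$-leaves $v_1,\ldots,v_k$ in order. By the universal property of $H*\Lie$, these extend to an operad morphism $H*\Lie\to\overline{\Lie_V}$.

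Next I would check that $\phi$ descends to the quotient $H\Lie$, i.e.\ that it respects the Hopf-operad relations. The multiplication relation holds because concatenating two caterpillars corresponds to multiplication in $T(V)$; the identity relation is immediate since $1\in T(V)$ corresponds to the empty caterpillar. The antipode relation reduces to the antisymmetry relation in the Lie operad: reversing a caterpillar with $k$ $V$-labels picks up $k$ signs, matching $S(v_1\cdots v_k)=(-1)^k v_k\cdots v_1$ in $T(V)$. The crucial case of the comultiplication relation is $h=v\in V$ primitive, where $\Delta(v)=v\otimes 1 + 1\otimes v$ and the relation reduces to $[v,[a,b]]=[[v,a],b]+[a,[v,b]]$, which is exactly an instance of IHX/Jacobi. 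Since $\Delta$ is an algebra map and both sides of the comultiplication relation are compatible with products in $H$, the primitive case, combined with the already-checked multiplication relation, propagates the relation to all of $T(V)$.

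Finally, I would verify that $\phi$ is a bijection. By the Claim above, $H\Lie[n]\cong\Lie[n]\otimes T(V)^{\otimes n}$. To match this with $\overline{\Lie_V}\arity{n+1}$, I would show that every tree is equivalent, modulo IHX and antisymmetry, to a sum of trees in which the skeleton subtree (spanned by paths between the $n+1$ external vertices) lies in $\Lie[n]$ and all $V$-labels appear as caterpillar bumps along the $n$ input edges. The reduction step uses the IHX instance $[[v,X],Y]=[v,[X,Y]]-[X,[v,Y]]$ to migrate $V$-leaves outward onto the input edges; this is precisely the image under $\phi$ of the comultiplication relation. Surjectivity is then immediate, and injectivity follows from a graded dimension comparison on the two sides.

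The main obstacle lies in the last step: justifying that the ``skeleton plus caterpillar bumps'' expressions really give a basis, i.e.\ that no further relations are imposed by collapsing $H*\Lie$ onto $\overline{\Lie_V}$. Equivalently one must identify the multilinear-in-$x_1,\ldots,x_n$ part of the free Lie algebra $\sL(V\oplus \F\{x_1,\ldots,x_n\})$ with $\Lie[n]\otimes T(V)^{\otimes n}$. This is a PBW-type statement that generalizes the familiar $n=1$ case (where the $x_1$-linear part of $\sL(V\oplus\F x_1)$ is $T(V)\cdot x_1$) and requires careful $\sym{n}$-equivariant bookkeeping of the restriction $\Lie[n+k]\!\downarrow_{\sym{k}}$ appearing in the $V$-degree $k$ summand.
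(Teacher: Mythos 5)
Your proposal follows essentially the same route as the paper: both rest on the caterpillar identification $\Lie_V\arity{2}\cong T(V)$ and the observation that the comultiplication relation for a primitive $v\in V$ pushed past a trivalent vertex is precisely an IHX relation involving a $v$-labeled hair, with IHX then used to migrate all $V$-labels onto the edges so that every element decomposes into $\Lie$ pieces and caterpillars. The paper's proof is in fact terser than yours — it does not spell out the antipode/multiplication/unit checks or the injectivity count you flag as the main obstacle — so the additional care you propose (in particular the PBW-type identification of the multilinear part of $\sL(V\oplus\F\{x_1,\dots,x_n\})$ with $\Lie[n]\otimes T(V)^{\otimes n}$) only supplements what the paper leaves implicit.
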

\begin{proof}
Given an element of $\overline{\Lie_V}$, we can think of it as a tree with numbered leaves with  $V$-labeled trees growing off of it. (We need $\overline{\Lie_V}\arity{0}=\overline{\Lie_V}\arity{1}=0$ for this to work!)
 The IHX relation allows us to replace all the hanging trees with hairs, i.e., trees without internal vertices.
Thus we can decompose this element as a composition of elements of $\Lie$ and $\Lie_V\arity{2}\cong T(V)$. The comultiplication axiom for $H\Lie$ when applied to a primitive element $v\in H$ being pushed past a trivalent vertex is as follows:
\begin{center}
\begin{tikzpicture}[baseline=(eq.south)]
\node[empty](a){};
\node[hopf](b)[right of=a,node distance=0.6cm]{$\,\,v$} edge (a.center);
\node[empty](c)[right of=b]{};
\draw (b) to (c.center);
\node[empty](d)[right of=c]{};
\node[empty](e)[above right of=c]{};
\draw (e.center) to (c.center);
\node[empty](f)[below right of=c]{};
\draw (f.center) to (c.center);

\node[empty](a')[right of=d]{};
\node[empty](b')[right of=a',node distance=0.6cm]{};
\draw(a'.center) to (b'.center);
\node[empty](c')[right of=b', node distance=1.3cm]{};
\node[hopf](e')[above of=c',node distance=0.6cm]{$\,\,v$};
\draw (b'.center) to[out=45,in=180] (e');
\node[empty](f')[right of=e', node distance=0.9cm]{};
\draw (f'.center) to (e');
\node[hopf](g')[below of=c',node distance=0.6cm]{$\,\,1$};
\draw (b'.center) to[out=-45,in=180] (g');
\node[empty](h')[right of=g',node distance=0.9cm]{};
\draw (h'.center) to (g');

\node[empty](a'')[right of=a', node distance=3.8cm]{};
\node[empty](b'')[right of=a'',node distance=0.6cm]{};
\draw (b''.center) to (a''.center);
\node[empty](c'')[right of=b'', node distance=1.3cm]{};
\node[hopf](e'')[above of=c'',node distance=0.6cm]{$\,\,1$};
\draw (b''.center) to[out=45,in=180] (e'');
\node[empty](f'')[right of=e'', node distance=0.9cm]{};
\draw (f''.center) to (e'');
\node[hopf](g'')[below of=c'',node distance=0.6cm]{$\,\,v$};
\draw (b''.center) to[out=-45,in=180] (g'');
\node[empty](h'')[right of=g'',node distance=0.9cm]{};
\draw (h''.center) to (g'');

\node[empty][left of =a'', node distance=.5cm]{$+$};
\path(d) to node(eq){$=$} (a');

\end{tikzpicture}
\end{center}
This is exactly an IHX relation involving a $v$-labeled hair:
\begin{center}
\begin{tikzpicture}[baseline=(eq.south)]
\node[empty](a){};
\node[empty](b)[right of=a,node distance=0.6cm]{};
\draw (b.center) edge (a.center);
\node[below of=b, node distance=.7cm]{$v$} edge (b.center);
\node[empty](c)[right of=b]{};
\draw (c.center) to (b.center);
\node[empty](d)[right of=c]{};
\node[empty](e)[above right of=c]{};
\draw (e.center) to (c.center);
\node[empty](f)[below right of=c]{};
\draw (f.center) to (c.center);

\node[empty](a')[right of=d]{};
\node[empty](b')[right of=a',node distance=0.6cm]{};
\draw (b'.center) edge (a'.center);
\node[empty](c')[right of=b', node distance=1.3cm]{};
\node[empty](e')[above of=c',node distance=0.6cm]{};
\draw (b'.center) to[out=45,in=180] (e'.center);
\node[below of =e', node distance=.7cm]{$v$} edge (e'.center);
\node[empty](f')[right of=e', node distance=0.9cm]{};
\draw (f'.center) to (e'.center);
\node[empty](g')[below of=c',node distance=0.6cm]{};
\draw (b'.center) to[out=-45,in=180] (g'.center);
\node[empty](h')[right of=g',node distance=0.9cm]{};
\draw (h'.center) to (g'.center);

\node[empty](a'')[right of=a', node distance=3.8cm]{};
\node[empty](b'')[right of=a'',node distance=0.6cm]{};
\draw (b''.center) edge (a''.center);
\node[empty](c'')[right of=b'', node distance=1.3cm]{};
\node[empty](e'')[above of=c'',node distance=0.6cm]{};
\draw (b''.center) to[out=45,in=180] (e''.center);
\node[empty](f'')[right of=e'', node distance=0.9cm]{};
\draw (f''.center) to (e''.center);
\node[empty](g'')[below of=c'',node distance=0.6cm]{};
\draw (b''.center) to[out=-45,in=180] (g''.center);
\node[below of=g'', node distance=.7cm]{$v$} edge(g''.center);
\node[empty](h'')[right of=g'',node distance=0.9cm]{};
\draw (h''.center) to (g''.center);

\node[empty][left of =a'', node distance=.5cm]{$+$};
\path(d) to node(eq){$=$} (a');

\end{tikzpicture}
\end{center}
\end{proof}

\section{Hopf algebras and groups}
\label{sec:action_on_Hn}

As before, let $H$ be a cocommutative Hopf algebra. Our goal in this section is to show that there is a natural $\Aut(F_n)$ action on $H^{\otimes n}$ and to introduce an appropriate quotient $\overline{H^{\otimes n}}$ on which $\Out(F_n)$ acts.

\begin{definition}\label{def:homh}
Given a group $G$, and a cocommutative Hopf algebra $H$, define $\HomH(G;H)$ as the set of sequences of functions $\varphi=\{\varphi_k\}$, $\varphi_k\colon G^k\to H^{\otimes k}$ satisfying the following properties
\begin{enumerate}
\item $\varphi_k$ commutes with the action of $\sym k$.
\item $\varphi_k(1,g_1,\ldots,g_{k-1})= \eta(1)\otimes \varphi_{k-1}(g_1,\ldots,g_{k-1})$
\item $(\epsilon\otimes \id^{k-1})\varphi_k(g_1,\ldots,g_k)=\varphi_{k-1}(g_2,\ldots,g_k)$
\item $\varphi_k(g^{-1},g_1,\ldots,g_{k-1})=(S\otimes \id^{k-1})\varphi_k(g,g_1,\ldots g_k)$
\item $\varphi_k(g,g,g_1,\ldots,g_{k-2})=(\Delta\otimes \id^{k-2})\varphi_{k-1}(g,g_1,\ldots,g_{k-2})$
\item $\varphi_k(g_1g_2,g_3,\ldots,g_{k+1})=(m\otimes \id^{k})\varphi_{k+1}(g_1,g_2,\ldots,g_{k+1})$
\end{enumerate}
\end{definition}

\begin{remark}
One can view the elements of the set $\HomH(G;H)$ as natural transformations as follows: Let $\mathcal{HOPF}$ be the PROP  of Hopf algebras, 
i.e., this is a monoidal category with the objects the set of natural numbers and morphisms generated by the ``multiplication'' $m \in \Hom(2,1)$, ``comultiplication" $\Delta \in \Hom(1,2)$, ``unit" $\epsilon \in \Hom(0,1)$, ``counit" $\eta \in \Hom(1,0)$, ``antipode" $S\in \Hom(1,1)$ and the ``flip" $\tau \in \Hom(2,2)$, which satisfy all the axioms of the Hopf algebras. For any Hopf algebra $H$ there is a canonical monoidal functor $\mathfrak{F}_H : \mathcal{HOPF} \to \End_{\F}(H)$ which sends the object $n$ to $H^{\otimes n}$. The images of the basic morphisms are just the structure operations of the Hopf algebra $H$.

For any two Hopf algebras $H_1$ and $H_2$ one can define the set $\HomH(H_1;H_2)$ of natural transformations between the functors $\mathfrak{F}_{H_1}$ and $\mathfrak{F}_{H_2}$, which are viewed as functors and not as monoidal functors. (If one considers these as monoidal functors, this will lead to the set of Hopf algebra morphisms.)

For any group $G$ the group algebra $\F [G]$ has a canonical Hopf algebra structure and the previous definition of $\HomH(G,H)$ is the same as $\HomH(\F [G];H)$. Note that since $\F [G]$ is a cocommutative Hopf algebra, the set $\HomH(\F [G];H)$ will be very small unless $H$ is also cocommutative.
\end{remark}
\begin{proposition}
$\HomH(G;H)$ is an $\Aut(G)$-module via the action
$$
\psi\cdot\varphi_k(g_1,\ldots,g_k)=\varphi_k(g_1^{\psi},\ldots,g_k^{\psi})
$$
for any $\psi\in \Aut(G)$, and where the notation $g^\psi=\psi^{-1}(g)$ denotes the right action of $\Aut(G)$ on $G$.
\end{proposition}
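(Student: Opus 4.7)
The plan is straightforward: I need to verify two things. First, that $\psi\cdot\varphi$ really does satisfy all six properties of Definition~\ref{def:homh}, i.e., that the putative action lands back in $\HomH(G;H)$. Second, that the formula defines a genuine left action, i.e., $(\psi_1\psi_2)\cdot\varphi = \psi_1\cdot(\psi_2\cdot\varphi)$ and $\id\cdot\varphi=\varphi$. The key observation driving everything is that $\psi\in\Aut(G)$, so $\psi^{-1}$ is also a group automorphism; in particular it preserves identity, inverses, and products. Since the six axioms in Definition~\ref{def:homh} only see the group-theoretic operations on the $g_i$'s (and not $\psi$), the action should slide through each axiom essentially formally.

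For well-definedness I would run through the six properties in order. Property (1) reduces to the fact that re-indexing the arguments of $\varphi_k$ commutes with applying $\psi^{-1}$ to each of them. Properties (2), (4), and (6) use respectively that $\psi^{-1}(1)=1$, $\psi^{-1}(g^{-1})=(\psi^{-1}(g))^{-1}$, and $\psi^{-1}(g_1g_2)=\psi^{-1}(g_1)\psi^{-1}(g_2)$, so that after substituting the group-theoretic identity the axiom for $\varphi$ immediately yields the corresponding axiom for $\psi\cdot\varphi$. Property (5) is even simpler: substituting $g^\psi$ for $g$ in both sides does not disturb the diagonal. Property (3) is purely formal since it does not involve the first argument as a group element. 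In each case the verification is one line.

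For the action axioms, the key point is the contravariant convention $g^\psi=\psi^{-1}(g)$, which makes $\psi\mapsto(g\mapsto g^\psi)$ into a right action of $\Aut(G)$ on $G$:
\[
(g^{\psi_1})^{\psi_2} = \psi_2^{-1}\psi_1^{-1}(g)=(\psi_1\psi_2)^{-1}(g)=g^{\psi_1\psi_2}.
\]
Applying $\varphi_k$ coordinatewise then turns this right action on $G$ into a left action on $\HomH(G;H)$ in the standard way, and $\id\cdot\varphi=\varphi$ is immediate since $\id^{-1}=\id$. There is no genuine obstacle here; if any step warrants care it is simply bookkeeping the inverse in the definition of $g^\psi$ consistently so that composition comes out on the correct side. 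The whole argument is a routine check exploiting that the axioms of $\HomH(G;H)$ are insensitive to automorphisms of $G$.
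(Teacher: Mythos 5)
Your proposal is correct and follows exactly the route the paper takes: the paper's own proof simply asserts that the six axioms are ``easy to see'' and records the composition identity $(\psi_1\psi_2)\cdot\varphi_k=\psi_1\cdot(\psi_2\cdot\varphi_k)$, which is precisely the check you carry out. You have merely filled in the routine details (that $\psi^{-1}$ preserves identity, inverses, and products, and that the contravariant convention $g^\psi=\psi^{-1}(g)$ makes the composition come out on the correct side) that the authors leave to the reader.
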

\begin{proof}
It is easy to see that  $\{\psi\cdot\varphi_k\}$ satisfies the above conditions. Also
$$
(\psi_1\psi_2)\cdot \varphi_k(g_1,\ldots,g_k)=\psi_1\cdot(\psi_2\cdot \varphi_k),
$$
so it does define a group action.
\end{proof}

\begin{proposition}
\label{prop:into}
Let $g_1,\ldots, g_n$ be a generating set for $G$. Then $\varphi\in \HomH(G;H)$ is completely determined by
$\varphi_n(g_1,\ldots,g_n)\in H^{\otimes n}.$
\end{proposition}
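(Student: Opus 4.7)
The plan is to prove uniqueness by an algorithm that, starting from the single value $\varphi_n(g_1,\ldots,g_n)$ and applying the six axioms, computes $\varphi_k(h_1,\ldots,h_k)$ for every $k$ and every $(h_1,\ldots,h_k)\in G^k$. I would split the reduction into two stages: (I) eliminate general group elements in the arguments so as to reduce to tuples of positive generators, and (II) reduce such tuples to the base tuple $(g_1,\ldots,g_n)$.

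For stage (I), write each $h_i$ as some word in $g_1^{\pm 1},\ldots,g_n^{\pm 1}$ of length $s_i$. Using axiom~(1) to shuffle a chosen entry into the first slot, I apply axiom~(2) whenever that entry equals $1$ (extracting an $\eta(1)$, lowering the arity and decreasing $\#\{i:h_i=1\}$) and axiom~(6) whenever the entry has length $\ge 2$ (splitting off its leading letter: this produces an $m$ and raises arity by $1$, but decreases $\sum_i\max(s_i-1,0)$ by $1$). Ordering configurations lexicographically by the pair $(\sum_i\max(s_i-1,0),\,\#\{i:h_i=1\})$, every such step strictly decreases the invariant, so after finitely many steps every argument is a single letter $g_j^{\pm 1}$. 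A final pass of axiom~(4), combined with axiom~(1) for repositioning, converts every negative letter into $S$ acting on the corresponding tensor factor. The original value is then expressed as a specific tensor of Hopf-algebra operations ($m,\Delta,\eta,\epsilon,S$, and permutations) applied to some $\varphi_K(g_{j_1},\ldots,g_{j_K})$ with $j_i\in\{1,\ldots,n\}$.

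For stage (II), I induct on $K$. If the tuple $(g_{j_1},\ldots,g_{j_K})$ has a repeated entry, axiom~(1) places the repeat in slots $1,2$ and axiom~(5) rewrites $\varphi_K$ as $(\Delta\otimes\id^{K-2})\varphi_{K-1}$ of the deduplicated tuple, which is determined by the inductive hypothesis. If all entries are distinct then $K\le n$; axiom~(1) sorts the indices, and axiom~(3) applied $n-K$ times—after using axiom~(1) to bring the absent generators to the front—expresses $\varphi_K(g_{j_1},\ldots,g_{j_K})$ as iterated $(\epsilon\otimes\id^{\bullet})$'s applied to $\varphi_n(g_1,\ldots,g_n)$. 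Composing (I) and (II) realises $\varphi_k(h_1,\ldots,h_k)$ as an axiom-determined functional of $\varphi_n(g_1,\ldots,g_n)$.

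The main obstacle is the bookkeeping in stage (I): because axiom~(6) increases arity, induction on $k$ alone fails, and the lexicographic invariant above is the key gadget that guarantees termination. A minor point worth flagging is that the word representation of each $h_i$ in the generators is not unique, but since we are proving uniqueness of $\varphi$ and not constructing a $\varphi$ from an arbitrary element of $H^{\otimes n}$, the choice is immaterial: every reduction path, applied to the same $\varphi$, returns the same value $\varphi_k(h_1,\ldots,h_k)$ by definition, so no coherence check among reduction paths is required.
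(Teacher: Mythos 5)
Your argument is correct and follows essentially the same route as the paper's: both reduce an arbitrary tuple of words in the generators to the base tuple $(g_1,\ldots,g_n)$ by using axioms (5), (4), (1) and (6) to duplicate, invert, permute and concatenate letters (with (2)/(3) covering the degenerate cases), and your closing observation that no coherence check among reduction paths is needed for the uniqueness direction is exactly right. The only difference is organizational: the paper runs the reduction in the opposite direction and records it as the single explicit composite $\left(\bigotimes_{i} m^{|w_i|-1}\right)\circ\sigma\circ\left(\bigotimes_{i}(\id^{k_i}\otimes S^{\otimes \ell_i})\right)\circ\left(\bigotimes_{i}\Delta^{k_i+\ell_i-1}\right)$ applied to $\varphi_n(g_1,\ldots,g_n)$, which renders your lexicographic termination invariant unnecessary and is then reused verbatim to define $\varphi$ in the surjectivity proof of Theorem~\ref{thm:onto}.
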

\begin{proof}
Let $w_1,\cdots,w_k$ be words in the generators and their inverses representing elements of $G$: $w_i=w_{i1}\cdots w_{i|w_i|}$ for $w_{ij}\in\{g_1^{\pm 1}\ldots, g_n^{\pm 1}\}$.

Suppose $g_i$ appears $k_i$ times overall and $g_i^{-1}$ appears $\ell_i$ times overall. Let $N$ be the sum of the lengths of the words $w_i$. Let $\sigma\in \sym{N}$ be the permutation rearranging the word $g_1^{k_1}g_1^{-\ell_1}\cdots g_n^{k_n}g_n^{-\ell_n}$ into the word $w_1w_2\cdots w_n$. By the above axioms, we have
$$
\left(\bigotimes_{i=1}^n \Delta^{k_i+\ell_i-1}\right)(\varphi_n(g_1,\ldots, g_n))= \varphi_{N}(g_1,\ldots,g_1,\ldots,g_n,\ldots,g_n).
$$
Here we adopt the conventions that $\Delta^{-1}=\epsilon$ and $\Delta^0=\id$ the first of which takes care of the case when one of the generators does not appear in any word.
Applying $\bigotimes_{i=1}^k(\id^{k_i}\otimes S^{\otimes \ell_i})$, this gives us
$$
\varphi_{N}(g_1,\ldots,g_1,g_1^{-1},\ldots,g_1^{-1},\ldots,g_n,\ldots, g_n,g_n^{-1},\ldots,g_n^{-1}).
$$
Applying the permutation $\sigma$ we get
$$
\varphi_{N} (w_{11},\ldots,w_{1|w_1|},\ldots,w_{n1},\ldots,w_{n|w_n|})
$$
and applying $\otimes_{i=1}^k m^{|w_i|-1} $, we get $\varphi_k(w_1,\ldots,w_k)$, here taking the convention that $m^{-1}=\epsilon$ and $m^0=\id$ the first of which takes care of the case that one of the words is empty.
Thus $\varphi_k(w_1,\ldots,w_k)$ is determined by $\varphi_n(g_1,\ldots, g_n)$. Indeed, we have just proved the formula
$$
\varphi_k(w_1,\ldots,w_k)=\left(\bigotimes_{i=1}^km^{|w_i|-1}\right)\circ \sigma\circ\left(\bigotimes_{i=1}^k(\id^{k_i}\otimes S^{\otimes \ell_i})\right)\circ\left(\bigotimes_{i=1}^n \Delta^{k_i+\ell_i-1}\right)\varphi_n(g_1,\ldots,g_n)
$$
\end{proof}

For any group $G$ with chosen generating set $g_1,\ldots, g_n$, there is an evaluation map
$$
\begin{array}{rcl}
\HomH(G;H) & \to & H^{\otimes n} \\
\varphi &\mapsto & \varphi_n(g_1,\ldots,g_n).
\end{array}
$$
The previous proposition shows that the evaluation map is always injective.

\begin{theorem}
\label{thm:onto}
If $G=F_n$ is the free group, then the evaluation map $\HomH(F_n;H)\to H^{\otimes n}$ is surjective, hence an isomorphism.
\end{theorem}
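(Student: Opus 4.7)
The plan is to take $\xi \in H^{\otimes n}$ and use the explicit formula that emerged in the proof of Proposition~\ref{prop:into} as the \emph{definition} of the candidate $\varphi_k(w_1,\ldots,w_k)$, and then verify (a) that the formula depends only on the elements of $F_n$ represented by the tuple of words $(w_1,\ldots,w_k)$, and (b) that the axioms (1)--(6) of Definition~\ref{def:homh} hold. Injectivity of the evaluation map is already Proposition~\ref{prop:into}, so together this will yield the desired isomorphism.

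The freeness of $F_n$ reduces task (a) to showing that inserting a single trivial pair $g_ig_i^{-1}$ or $g_i^{-1}g_i$ at an arbitrary position inside some $w_j$ leaves the value of the formula unchanged. Such an insertion replaces $\Delta^{k_i+\ell_i-1}$ at the $i$-th slot of $\xi$ with $\Delta^{k_i+\ell_i+1}$, adds one more $S$ to the $S^{\otimes \ell_i}$ block, and adds two more multiplications inside the $m^{|w_j|-1}$ block. Using coassociativity, the two new tensor factors at slot $i$ can be factored off as one extra $\Delta$ applied to a newly-created tensor slot; cocommutativity of $H$ lets us arrange that $S$ is applied to one of its two outputs. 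By construction of $\sigma$, these two outputs land adjacent to each other at precisely the insertion point inside the $w_j$-block. The composition $m\circ(\id\otimes S)\circ \Delta = \eta\circ\epsilon$ (respectively $m\circ(S\otimes\id)\circ\Delta = \eta\circ\epsilon$) then collapses this triple to an $\epsilon$ applied to the new factor together with insertion of a $1_H$ that is absorbed into the surrounding multiplications; finally, the identity $(\id\otimes\epsilon)\circ\Delta=\id$ (or, when $k_i=\ell_i=0$ before the insertion, the convention $\Delta^{-1}=\epsilon$) absorbs the leftover $\epsilon$ back into the original $\Delta^{k_i+\ell_i-1}$. I expect this to be the main obstacle: the combinatorics of where $\sigma$ inserts the two new tensor factors relative to the surrounding coassociative split requires careful bookkeeping, but cocommutativity is precisely the feature that makes all such rearrangements harmless.

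Once well-definedness on $F_n$ is established, task (b) is essentially built into the construction. Axiom~(1) is symmetric-group equivariance and corresponds to precomposing $\sigma$ with a permutation of the outer blocks. Axioms~(4), (5), and~(6) each describe an elementary modification of the word tuple (inverting, doubling, concatenating), which affects the formula exactly by the corresponding extra $S$, $\Delta$, or $m$. Axiom~(2) is the case in which the first word is empty, handled by the convention $m^{-1}=\eta$. Axiom~(3) is the counit identity $(\epsilon\otimes\id)\circ\Delta=\id$ applied to the leftmost tensor slot. Each verification is a short diagram chase using cocommutativity. Thus any $\xi\in H^{\otimes n}$ is hit by some $\varphi\in \HomH(F_n;H)$ with $\varphi_n(g_1,\ldots,g_n)=\xi$, proving surjectivity and hence that the evaluation map is the claimed isomorphism.
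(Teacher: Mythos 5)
Your proposal is correct and follows essentially the same route as the paper: define $\varphi_k$ on tuples of words by the explicit formula from Proposition~\ref{prop:into}, check well-definedness by showing that insertion of a trivial pair $x_jx_j^{\pm 1}x_j^{\mp 1}$ collapses via the antipode axiom $m\circ(\id\otimes S)\circ\Delta=\eta\circ\epsilon$ together with the counit identity, and then verify the axioms of Definition~\ref{def:homh} by short diagram chases (the paper likewise only sketches axiom (5), using that $\Delta$ is an algebra map commuting with $S$). No substantive differences to report.
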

\begin{proof}
Let $x_1,\ldots,x_n$ be the standard generating set for $F_n$. Let $\mathbf{h}\in H^{\otimes n}$, and let $(w_1,\ldots,w_k)\in G^k$ be a $k$-tuple of non-empty words. Using the same notation as in the previous proposition define
$$
\varphi_k(w_1,\ldots,w_k):=\left(\bigotimes_{i=1}^km^{|w_i|-1}\right)\circ \sigma\circ\left(\bigotimes_{i=1}^k(\id^{k_i}\otimes S^{\otimes \ell_i})\right)\circ\left(\bigotimes_{i=1}^n \Delta^{k_i+\ell_i-1}\right)\mathbf h.
$$
Note that  this definition implies that $\varphi_n(x_1,\ldots,x_n)=\mathbf{h}$, so $\varphi\mapsto\mathbf{h}$ under the evaluation map if it is a well-defined element of $\HomH(G;H)$.

If $\mathbf{h}=h^1\otimes\cdots\otimes h^n$ is a pure tensor, $\varphi_k(w_1,\ldots,w_k)$ can be calculated as follows. For the $k$th occurrence of the generator $x_i^{\pm 1}$ in all words, replace it by $h^i_{(k)}$ or $S(h^i_{(k)})$ depending on the sign, and then take the tensor product of this replacement for each word. So, for example:
$$\varphi_2(x_2^2x_3,x_2x_3^{-1}x_1)=h^2_{(1)}h^2_{(2)}h^3_{(1)}\otimes h^2_{(3)}S(h^3_{(2)})h^1_{(1)}.$$

The fact that this is well-defined amounts to showing that inserting a pair $x_jx_j^{-1}$ or $x_j^{-1}x_j$ in one of the $w_i$s does not change the above element. This follows from the statement
\begin{align*}
h_{(1)}S(h_{(2)})\otimes h_{(3)}\otimes\cdots\otimes h_{(k)}&=S(h_{(1)})h_{(2)}\otimes h_{(3)}\otimes\cdots\otimes h_{(k)}\\
&=\eta(1)\otimes h_{(1)}\otimes\cdots\otimes h_{(k-2)},
\end{align*}
and this is just the statement
\begin{align*}
(m^2\otimes \id^{k-2})(\id\otimes S\otimes \id^{k-2})\Delta^k(h)&=(m^2\otimes\id^{k-2})(S\otimes\id\otimes\id^{k-2})\Delta^k(h)\\
&=\eta \otimes \Delta^{k-2}(h)
\end{align*} which follows from the antipode axiom.

Next, we must show that the above definition satisfies all of the appropriate conditions in Definition \ref{def:homh}, which is not hard. We give a 
sketch of the
proof of condition (5) and leave the rest to the reader.
We need to show that
$$
\varphi_k(w,w,w_1,\ldots,w_{k-2})=(\Delta\otimes \id^{k-2})\varphi_{k-1}(w,w_1,\ldots,w_{k-2}).
$$
Suppose that $w=x_{i_1}^{\epsilon_1}\ldots x_{i_m}^{\epsilon_m}$, where $\epsilon_i=\pm 1$.
Let $r_i=\frac{1}{2}(1-\epsilon_i)$.
Then
$$\varphi_{k-1}(w,w_1,\ldots,w_{k-2})=S^{r_1}(h^{i_1}_{(j_1)})  \cdots S^{r_m}(h^{i_k}_{(j_m)})\otimes \cdots,$$
for some indices $j_i$.
So $$(\Delta\otimes \id^{k-2})\varphi_{k-1}(w,w_1,\ldots,w_{k-2})=\Delta(S^{r_1}(h^{i_1}_{(j_1)})  \cdots S^{r_m}(h^{i_k}_{(j_m)}))\otimes \cdots,$$
and the proof is finished by noticing that
$$\Delta\left(S^{r_1}(h^{i_1}_{(j_1)})  \cdots S^{r_m}(h^{i_k}_{(j_m)})\right)= S^{r_1}(h^{i_1}_{(j_1)})  \cdots S^{r_m}(h^{i_k}_{(j_m)})\otimes S^{r_1}(h^{i_1}_{(j_1')})  \cdots S^{r_m}(h^{i_k}_{(j_m')}),$$
where $j_1',\ldots,j_m'$ are a new set of indices disjoint from $j_1,\ldots,j_m$. This follows since $\Delta$ is a homomorphism: $\Delta(ab)=a_{(1)}b_{(1)}\otimes a_{(2)}b_{(2)}$, and that it commutes with $S$: $\Delta(S(a))=S(a_{(1)})\otimes S(a_{(2)})$.

\end{proof}

\begin{definition}
Theorem~\ref{thm:onto} implies that $\Aut(F_n)$ acts on $H^{\otimes n}$ for any cocommutative Hopf algebra $H$. Let $\rho_{H}\colon \Aut(F_n)\to\End_{\F}(H^{\otimes n})$ denote this action.
\end{definition}

\begin{remark}
Unwinding the definitions one can see how elements of $\Aut(F_n)$ act on $H^{\otimes n}$. The symmetric group $\sym{n}$ which permutes the generators acts by permuting the factors in the tensor product; inverting a generator acts as the antipode on the corresponding component of the tensor product.
Finally the element $\eta\colon g_1 \mapsto g_2^{-1}g_1, g_2 \mapsto g_2^{-1}$ acts as
$(m \otimes \id^{n-1} )(\tau\otimes\id^{n-2})(\id \otimes \Delta \otimes \id^{n-2})(\id \otimes S\otimes \id^{n-2})$.
Figure~\ref{action} depicts $\eta$ when $n=2$.
Since these elements generate $\Aut(F_n)$ they determine the action of the whole group. One can directly check that these transformations satisfy the relations between these generators of $\Aut(F_n)$, using the explicit presentation from~\cite{AFV}.
Figure~\ref{action} shows an example of one such relation, and we challenge the reader to show the given relation (and the others) using graphical calculus.
\end{remark}

\begin{remark}
\label{rem:action-nt}
The proofs of Theorem~\ref{thm:onto} and Proposition~\ref{prop:into} can be rephrased to give  that for any functor $\mathcal{F}: \mathcal{HOPF} \to \mathcal{C}$ there is a bijection between the set of natural transformations between the functors $\mathfrak{F}_{\F[F_n]}$ and $\mathcal{F}$ with the object $\mathcal{F}(n)$, provided that the functor $\mathcal{F}$ is ``cocommutative''. Therefore there is a canonical action of $\Aut(F_n)$ on $\mathcal{F}(n)$,
which is given by the formula described above.
\end{remark}

\bigskip

Consider the case of the universal enveloping algebra $H=U(\mathfrak g)$. According to the PBW isomorphism, this is isomorphic (as a coalgebra) to $\Sym(\mathfrak g)$. Thus
$$
H^{\otimes n}\cong \Sym(\mathfrak g)^{\otimes n}\cong \Sym(\mathfrak g \otimes \F^n).
$$
Now clearly $\GL_n(\Z)$ acts on the latter space, and since $\Aut(F_n)$ and $\Out(F_n)$ surject onto $\GL_n(\Z)$, we also get an action of these groups on $H^{\otimes n}$, denoted $\rho_{S}\colon \Aut(F_n)\to \End_{\F}(H^{\otimes n})$. It is worthwhile to note that the representation $\rho_H$ constructed in this section is not the same as the representation $\rho_S$ defined via the PBW isomorphism. Indeed, $\rho_H$ does not in general factor through $\GL_n(\Z)$. Even in the case of $n=2$, when $\Out(F_2)\cong \GL_2(\Z)$, we will see in Remark~\ref{rem:noextension}
that $\rho_H$ does not in general extend to a representation of $\GL_2(\F)$, despite the fact that $\rho_S$ clearly does extend.

However, there is a similarity between these actions. The filtration $\{U_i\}$ on the algebra $H=U(\mathfrak{g})$ induces
a filtration $\{V_i\}$ of $H^{\otimes n}$ where
$$
V_i = \sum_{p_1+ \cdots + p_n = i} U_{p_1} \otimes \cdots \otimes U_{p_n} \subset H^{\otimes n}
$$
The PBW theorem implies that $V_{i}/V_{i-1} \cong \Sym^{i}(\mathfrak g \otimes \F^n)$. It is easy to verify that
the spaces $V_i$ are preserved under that action of $\Aut(F_n)$ and the induced action of  $\Aut(F_n)$
on the $V_{i}/V_{i-1}$ factors coincides with the natural action of $\GL_n(\Z)$ on $\Sym^{i}(\mathfrak g \otimes \F^n)$.

Thus, if one views $\Sym(\mathfrak g \otimes \F^n)$ as the associated graded module of $H^{\otimes n}$ the natural action of $\Aut(F_n)$ via $\GL_n(\Z)$ is the associated graded of the action $\rho_H$.

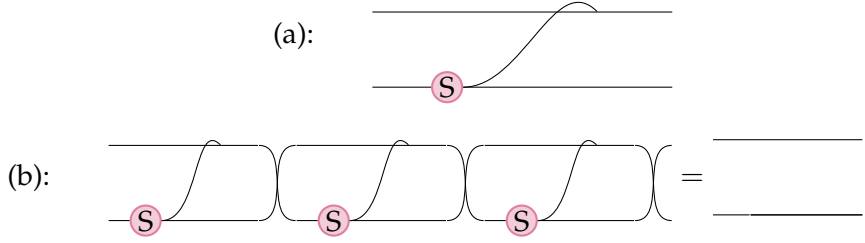
\begin{figure}
$$
\text{(a):\hspace{2em}}
\begin{minipage}{3cm}
\begin{tikzpicture}
\node[empty](A){};
\node[empty](B)[right of=A]{};
\node[empty](C)[right of =B]{};
\node[empty](D)[right of= C]{};
\node[empty](E)[right of =D]{};
\node[empty](F)[below of =A]{};
\node[antipode](G)[right of =F]{S} edge (F);
\node[empty](H)[right of =G]{};
\node[empty](I)[right of =H]{};
\node[empty](J)[right of =I]{} edge (G);
\draw (A) to (E);
\draw (G) to[out=0, in =135] (D);
\end{tikzpicture}
\end{minipage}
$$
$$\text{(b):\hspace{2em}}
\begin{minipage}{2.5cm}
\begin{tikzpicture}
\node[empty](A){};
\node[empty](B)[right of=A, node distance=.5cm]{};
\node[empty](C)[right of =B, node distance=.5cm]{};
\node[empty](D)[right of= C, node distance=.5cm]{};
\node[empty](E)[right of =D, node distance=.5cm]{};
\node[empty](F)[below of =A]{};
\node[antipode](G)[right of =F, node distance=.5cm]{S} edge (F);
\node[empty](H)[right of =G, node distance=.5cm]{};
\node[empty](I)[right of =H, node distance=.5cm]{};
\node[empty](J)[right of =I, node distance=.5cm]{} edge (G);
\draw (A) to (E);
\draw (G) to[out=0, in =135] (D);
\node[empty] (K) [right of =E, node distance=.5cm]{};
\node[empty] (L) [right of =J, node distance=.5cm]{};
\draw (E) to[out=0, in =180] (L);
\draw (J) to[out=0, in=180] (K);
\end{tikzpicture}
\end{minipage}
\begin{minipage}{2.5cm}
\begin{tikzpicture}
\node[empty](A){};
\node[empty](B)[right of=A, node distance=.5cm]{};
\node[empty](C)[right of =B, node distance=.5cm]{};
\node[empty](D)[right of= C, node distance=.5cm]{};
\node[empty](E)[right of =D, node distance=.5cm]{};
\node[empty](F)[below of =A]{};
\node[antipode](G)[right of =F, node distance=.5cm]{S} edge (F);
\node[empty](H)[right of =G, node distance=.5cm]{};
\node[empty](I)[right of =H, node distance=.5cm]{};
\node[empty](J)[right of =I, node distance=.5cm]{} edge (G);
\draw (A) to (E);
\draw (G) to[out=0, in =135] (D);
\node[empty] (K) [right of =E, node distance=.5cm]{};
\node[empty] (L) [right of =J, node distance=.5cm]{};
\draw (E) to[out=0, in =180] (L);
\draw (J) to[out=0, in=180] (K);
\end{tikzpicture}
\end{minipage}
\begin{minipage}{2.5cm}
\begin{tikzpicture}
\node[empty](A){};
\node[empty](B)[right of=A, node distance=.5cm]{};
\node[empty](C)[right of =B, node distance=.5cm]{};
\node[empty](D)[right of= C, node distance=.5cm]{};
\node[empty](E)[right of =D, node distance=.5cm]{};
\node[empty](F)[below of =A]{};
\node[antipode](G)[right of =F, node distance=.5cm]{S} edge (F);
\node[empty](H)[right of =G, node distance=.5cm]{};
\node[empty](I)[right of =H, node distance=.5cm]{};
\node[empty](J)[right of =I, node distance=.5cm]{} edge (G);
\draw (A) to (E);
\draw (G) to[out=0, in =135] (D);
\node[empty] (K) [right of =E, node distance=.5cm]{};
\node[empty] (L) [right of =J, node distance=.5cm]{};
\draw (E) to[out=0, in =180] (L);
\draw (J) to[out=0, in=180] (K);
\end{tikzpicture}
\end{minipage}
=
\begin{minipage}{2.0cm}
\begin{tikzpicture}
\node[empty](A){};
\node[empty](B)[right of=A, node distance=.5cm]{};
\node[empty](C)[right of =B, node distance=.5cm]{};
\node[empty](D)[right of= C, node distance=.5cm]{};
\node[empty](E)[right of =D, node distance=.5cm]{};
\node[empty](F)[below of =A]{};
\node[empty](G)[right of =F, node distance=.5cm]{} edge (F);
\node[empty](H)[right of =G, node distance=.5cm]{};
\node[empty](I)[right of =H, node distance=.5cm]{};
\node[empty](J)[right of =I, node distance=.5cm]{} edge (G);
\draw (A) to (E);
\draw (G) to (J);
\end{tikzpicture}
\end{minipage}
$$
\caption{(a) The action of $\eta\in \Aut(F_2)$ on $H^{\otimes 2}$. (b) Puzzle: Show $(\sigma_{12}\eta)^3=\id$ using graphical calculus.}\label{action}
\end{figure}

\begin{remark}
For any finitely generated group $\Gamma$ the space $\HomH(\Gamma;U(\mathfrak{g}))$ is dual to the space of jets of functions on the representation variety $\Hom(\Gamma;G)$ near the trivial representations, where $G$ is a Lie group with Lie algebra $\mathfrak{g}$, for details see~\cite{Kas}.\end{remark}

\subsection{$\overline{H^{\otimes n}}$ and an action of $\Out(F_n)$}

\begin{definition}
The Hopf algebra $H$ acts on $H^{\otimes n}$ via \emph{conjugation}. That is, suppose $h\in H$ and $\Delta^{2n-1}(h)=h_{(1)}\otimes h_{(2)}\otimes\cdots\otimes h_{(2n-1)}\otimes h_{(2n)}$, using Sweedler notation. Then define
$$
h\circledast (h_1\otimes\cdots\otimes h_n)= h_{(1)}h_1S(h_{(2)})\otimes\cdots\otimes h_{(2n-1)}h_n S(h_{(2n)}).
$$
The Hopf algebra axioms imply that this operations is associative, i.e.,
$$
(hh')\circledast (h_1\otimes\cdots\otimes h_n) =
h\circledast \left( h' \circledast (h_1\otimes\cdots\otimes h_n) \right).
$$

Let $C_{\circledast}$ denote the map $H^{\otimes (n+1)} \to H^{\otimes n}$ given by
$h \otimes h_1\otimes\cdots\otimes h_n \to h \circledast (h_1\otimes\cdots\otimes h_n)$ and
let $h_{\circledast}$ denote the map $H^{\otimes n} \to H^{\otimes n}$ given by
$h_1\otimes\cdots\otimes h_n \to h\circledast (h_1\otimes\cdots\otimes h_n)$.

Let $\overline{H^{\otimes n}}$ be the quotient of $H^{\otimes n}$ by the subspace $\widetilde{H^{\otimes n}}$ spanned by elements of the form
$$
(h-\eta\epsilon(h))\circledast(h_1\otimes \cdots\otimes h_n),
$$
i.e., this is the maximal quotient of $H^{\otimes n}$ where the conjugation action of $H$ factors through the counit.
\end{definition}
\begin{lemma}
\label{lem:conjonHomH}
Let $\varphi\in \HomH(G; H)$ and define
$h \circledast \varphi$ as the sequence $\{(h \circledast \varphi)_n\}$, where
$$
(h \circledast \varphi)_n(g_1,\dots, g_n)  = h \circledast \varphi_n(g_1,\dots, g_n),
$$
i.e., $h \circledast \varphi = h_{\circledast} \circ \varphi$. Then
$h \circledast \varphi \in \HomH(G,H)$.
\end{lemma}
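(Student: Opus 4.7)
The claim is that, for each $h \in H$, the operator $\varphi \mapsto h \circledast \varphi = \{h_{\circledast} \circ \varphi_k\}$ carries $\HomH(G;H)$ into itself. I will verify each of the six axioms of Definition~\ref{def:homh} for $h \circledast \varphi$ by using the corresponding axiom for $\varphi$. Each condition reduces to the assertion that $h_\circledast$ commutes with one of the canonical Hopf-algebra maps (the $\sym{k}$-action, insertion of $\eta(1)$, application of $\epsilon$, $S$, $\Delta$, or $m$), so the real content is a handful of pointwise identities in $H$.

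\textbf{Tools.} I plan to use: cocommutativity, which together with coassociativity makes $\Delta^{n-1}(h)$ a fully $\sym{n}$-symmetric tensor; the counit axiom $(\epsilon \otimes \id)\Delta = \id$; the antipode identities $m\circ(\id \otimes S)\circ \Delta = m\circ(S \otimes \id)\circ \Delta = \eta\epsilon$; the consequences $S^2 = \id$, $\epsilon\circ S = \epsilon$, $S(ab) = S(b)S(a)$, and $\Delta\circ S = (S\otimes S)\circ\Delta$, all valid in the cocommutative setting; and the fact that $\Delta$ is an algebra homomorphism.

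\textbf{Axiom-by-axiom sketch.} For (1), a permutation $\sigma\in\sym{k}$ applied to $h_\circledast(A_1\otimes\cdots\otimes A_k)$ permutes the Sweedler pairs $(h_{(2i-1)},h_{(2i)})$ along with the $A_i$; cocommutativity lets me reindex to the standard order. For (2), in $h_\circledast(\eta(1)\otimes A_2\otimes\cdots)$ the leading slot becomes $\sum h_{(1)}S(h_{(2)}) = \eta\epsilon(h)$, and the counit axiom then reduces $\Delta^{2k-1}(h)$ to $\Delta^{2k-3}(h)$ on the remaining slots. Axiom (3) is parallel: applying $\epsilon$ to the first slot produces $\epsilon(h_{(1)})\epsilon(h_{(2)})$ via $\epsilon\circ S=\epsilon$ and multiplicativity of $\epsilon$, and counit again drops the length of the iterated coproduct by one. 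Axiom (4) follows from $S^2=\id$ and $S(xyz)=S(z)S(y)S(x)$, combined with a swap $h_{(1)}\leftrightarrow h_{(2)}$ that is legal by cocommutativity. For (5), I expand $\Delta(h_{(1)}A_1S(h_{(2)}))$ using that $\Delta$ is multiplicative and $\Delta\circ S=(S\otimes S)\Delta$; the four resulting Sweedler factors match those of $\Delta^{2k-1}(h)$ after reassembling via coassociativity and (one application of) cocommutativity. Axiom (6) is where the antipode identity does its main work: after applying $m$ to the first two slots of $h_\circledast(A_1\otimes A_2\otimes\cdots)$, the middle of the combined product contains $S(h_{(2)})h_{(3)}$, which the antipode axiom collapses to $\eta\epsilon(h_{(2)})$, and the counit axiom then reduces $\Delta^{2k+1}(h)$ to $\Delta^{2k-1}(h)$, matching the other side.

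\textbf{Main obstacle.} Nothing deep is involved; the difficulty is entirely bookkeeping. The two sides of each identity use iterated coproducts of different lengths — for instance $\Delta^{2k-1}(h)$ appears in $(h\circledast\varphi)_k$ while $\Delta^{2k-3}(h)$ appears in $(h\circledast\varphi)_{k-1}$ — so one must work inside a sufficiently tall $\Delta^N(h)$ and reduce to the correct length using either the counit or the antipode identity, with cocommutativity available throughout to rearrange Sweedler factors freely. Once this rhythm is set, each of the six verifications is a few lines long.
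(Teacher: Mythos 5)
Your proposal is correct and follows essentially the same route as the paper: the paper also proceeds by checking the axioms one at a time, reducing each to the statement that $h_\circledast$ commutes with the relevant structure map (it works out axiom (5) in detail via $h_\circledast\circ\Delta=\Delta\circ h_\circledast$, exactly as in your sketch, using multiplicativity of $\Delta$ and $\Delta\circ S=(S\otimes S)\Delta$). Your axiom-by-axiom bookkeeping with the counit, antipode, and cocommutativity identities is the intended argument.
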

\begin{proof}
One just verifies the axioms. E.g.,  axiom (5):
\begin{align*}
h_\circledast\circ\varphi_k(g,g,g_1,\ldots,g_{k-2})&= h_{\circledast}\circ (\Delta\otimes \id^{k-2})\varphi_{k-1}(g,g_1,\ldots,g_{k-2})\\
&\overset{?}{=} (\Delta\otimes \id^{k-2})h_{\circledast} \circ \varphi_{k-1}(g,g_1,\ldots,g_{k-2}).
\end{align*}
So it suffices to show that $h_\circledast\circ \Delta=\Delta\circ h_\circledast$.  Let $\cdot$ denote componentwise multiplication in $H\otimes H$: $(h_1\otimes h_2)\cdot(h_3\otimes h_4)=h_1h_3\otimes h_2h_4$. Then the compatibility of $\Delta$ and $m$ implies that $\Delta(h_1h_2)=\Delta(h_1)\cdot\Delta (h_2)$. Now,
\begin{align*}
h_\circledast\circ \Delta(k)&= h_{(1)}k_{(1)}S(h_{(2)})\otimes h_{(3)}k_{(2)}S(h_{(4)})\\
&=(h_{(1)}\otimes h_{(3)})\cdot( k_{(1)}\otimes k_{(2)})\cdot (S(h_{(2)})\otimes S(h_{(4)}))\\
&= \Delta(h_{(1)}) \cdot \Delta (k) \cdot \Delta(S(h_{(2)}))\\
&=\Delta (h_{(1)})kS(h_{(2)})\\
&=\Delta\circ h_\circledast(k)
\end{align*}
completing the proof.
\end{proof}

\begin{remark}
\label{rem:conjugation}
Let $\varphi\in \HomH(G; H)$ be the element which sends $(g_0,g_1\dots, g_n)$ to $h \otimes h_1 \otimes\cdots\otimes h_n$, then $\varphi$ maps $(g_1^{g_0}, \dots g_n^{g_0})$ to $h\circledast (h_1\otimes\cdots\otimes h_n)$ which justifies the term \emph{conjugation}.
\end{remark}

\begin{lemma}\label{lem:primgen}
Suppose that $H=U(\mathfrak g)$. Then
$\widetilde{H^{\otimes n}}$ is the subspace of $H^{\otimes n}$ spanned by $X\circledast(h_1\otimes \cdots\otimes h_n)$ for $X\in\mathfrak g$.
\end{lemma}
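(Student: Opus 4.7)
\textbf{Proof plan for Lemma \ref{lem:primgen}.}

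The plan is to prove both inclusions separately, using the PBW decomposition of $U(\mathfrak{g})$ into products of primitive elements together with the associativity of the conjugation action.

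For the inclusion of the primitive-conjugation span in $\widetilde{H^{\otimes n}}$, I would simply observe that any primitive $X \in \mathfrak{g}$ satisfies $\epsilon(X) = 0$ (a consequence of the Hopf algebra axioms noted earlier in the paper), so $X \circledast v = (X - \eta\epsilon(X)) \circledast v$ is manifestly of the form defining $\widetilde{H^{\otimes n}}$.

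For the reverse inclusion, the idea is to reduce an arbitrary $(h - \eta\epsilon(h)) \circledast v$ to an iterated conjugation by primitives. First I would note that, by PBW, $U(\mathfrak{g}) = \mathds{k} \cdot 1 \oplus W$ where $W$ is spanned by nontrivial products $X_1 X_2 \cdots X_k$ ($k \geq 1$) of primitives. Since $\epsilon$ is an algebra map that vanishes on each primitive $X_i$, we have $\epsilon|_W = 0$, and thus writing $h = c \cdot 1 + w$ with $w \in W$ gives $\eta\epsilon(h) = c \cdot 1$, so $h - \eta\epsilon(h) = w \in W$. By linearity, it suffices to handle a single monomial $w = X_1 \cdots X_k$ with $k \geq 1$. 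Here I would apply the associativity $(hh') \circledast v = h \circledast (h' \circledast v)$ proved just above, iteratively, to obtain
$$
w \circledast v \;=\; X_1 \circledast \bigl( X_2 \circledast \bigl( \cdots \circledast (X_k \circledast v) \bigr)\bigr),
$$
which lies in the span of $\{X \circledast u : X \in \mathfrak{g},\, u \in H^{\otimes n}\}$ since the innermost $X_k \circledast v$ is already of this form, each outer $X_i \circledast (\,\cdot\,)$ applies to an element of $H^{\otimes n}$, and the spanning set is defined without restriction on the right argument.

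I do not expect any serious obstacle: the only subtlety is confirming that $\epsilon$ vanishes on products of primitives of length $\geq 1$, which follows because $\epsilon$ is a unital algebra homomorphism with $\epsilon(X_i) = 0$ for each primitive $X_i$. Everything else is a direct application of PBW (already invoked in the section) and the associativity lemma for $\circledast$ established right before the statement.
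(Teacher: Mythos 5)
Your proposal is correct and follows essentially the same route as the paper: decompose $h$ by PBW into its constant term plus a span of nontrivial monomials in primitives, observe that $h-\eta\epsilon(h)$ kills the constant term since $\epsilon$ vanishes on such monomials, and then peel off one primitive at a time using the associativity $(hk)\circledast v = h\circledast(k\circledast v)$. The only difference is that you spell out the easy inclusion ($\epsilon(X)=0$ for primitive $X$) which the paper leaves implicit; there is no gap.
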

\begin{proof}
By definition $\widetilde{H^{\otimes n}}$ is the $\F$-span of elements of the form $(h-\eta\epsilon(h))\circledast(h_1\otimes\cdots\otimes h_n)$. Let $h=h_0+h_1+\cdots+h_k$ where $h_i$ is a product of $i$ elements of $\mathfrak g$. Then the above element can be written as
$$(h_1+\cdots+h_k)\circledast (h_1\otimes\cdots\otimes h_n)=\sum_{i\geq 1}h_i\circledast(h_1\otimes\cdots\otimes h_n).$$
Finally, the terms $h_j\circledast(h_1\otimes\cdots\otimes h_n)$ can be gotten by repeated conjugation by elements of $\mathfrak g$, since
$$(hk)\circledast(h_1\otimes\cdots\otimes h_n)=h\circledast(k\circledast(h_1\otimes\cdots\otimes h_n)).$$
\end{proof}

\begin{lemma}
If $H=U(\mathfrak{g})$ 
then $\overline{H^{\otimes 1}}\cong H/[H,H]$, where $[H,H]$ is the $\F$-vector space spanned by commutators $h_1h_2-h_2h_1$. In particular, if $H=T(V)$, then
$$
\overline{H^{\otimes 1}}=\bigoplus_{k\geq 0} \left(V^{\otimes k}\right)_{\Z_k}
$$ is the space of cyclic words.
\end{lemma}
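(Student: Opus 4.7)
The plan is first to unwind the definition of $\widetilde{H^{\otimes 1}}$ in the $n=1$ case using Lemma~\ref{lem:primgen}, then to identify it with $[H,H]$ via a direct algebraic argument, and finally to specialize to $H = T(V)$.

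First I would compute $X \circledast h$ for $X \in \mathfrak{g}$ primitive and $h \in H$. Since $\Delta(X) = X \otimes 1 + 1 \otimes X$ and $S(X) = -X$, unpacking the definition of the conjugation action gives
$$
X \circledast h \;=\; X \cdot h \cdot S(1) + 1 \cdot h \cdot S(X) \;=\; Xh - hX \;=\; [X,h].
$$
Combined with Lemma~\ref{lem:primgen}, this shows
$$
\widetilde{H^{\otimes 1}} \;=\; \mathrm{span}_{\F}\{[X,h] : X \in \mathfrak{g},\; h \in H\} \;=\; [\mathfrak{g}, H],
$$
which is clearly contained in $[H,H]$.

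The main step is the reverse inclusion $[H,H] \subseteq [\mathfrak{g}, H]$. Since $H = U(\mathfrak{g})$ is spanned by products $X_1 \cdots X_k$ of elements of $\mathfrak{g}$, it suffices to show that $[X_1 \cdots X_k, b] \in [\mathfrak{g}, H]$ for all $b \in H$ and all $k \geq 1$. I would do this by induction on $k$, using the identity
$$
[X_1 X_2 \cdots X_k,\, b] \;=\; [X_1,\, X_2 \cdots X_k b] \;+\; [X_2 \cdots X_k,\, b X_1],
$$
which is a direct calculation (the cross terms $X_2 \cdots X_k b X_1$ cancel). The first summand on the right is manifestly in $[\mathfrak{g}, H]$, and the second is in $[\mathfrak{g}, H]$ by the inductive hypothesis. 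This establishes $\widetilde{H^{\otimes 1}} = [H,H]$, hence $\overline{H^{\otimes 1}} \cong H/[H,H]$.

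For the final statement with $H = T(V)$, I would observe that in each tensor degree $k$, the commutator subspace $[T(V), T(V)]_k$ is spanned by the differences $v_1 \cdots v_i \cdot v_{i+1} \cdots v_k - v_{i+1} \cdots v_k \cdot v_1 \cdots v_i$, which are exactly the relations identifying a pure tensor with its cyclic rotations. Hence $T(V)_k/[T(V),T(V)]_k = (V^{\otimes k})_{\Z_k}$, giving the claimed decomposition into cyclic words. The only potentially subtle step is the commutator identity above; once it is verified, the rest is routine.
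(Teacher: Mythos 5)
Your proof is correct and follows essentially the same route as the paper: both establish $\widetilde{H^{\otimes 1}}\subseteq [H,H]$ by computing $X\circledast h=[X,h]$ for primitive $X$ (you via Lemma~\ref{lem:primgen}, the paper via associativity of $\circledast$), and both get the reverse inclusion by induction on the length of a PBW monomial. Your explicit identity $[X_1\cdots X_k,b]=[X_1,X_2\cdots X_k b]+[X_2\cdots X_k,bX_1]$ just spells out the induction that the paper leaves implicit.
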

\begin{proof}
Suppose $v\in \mathfrak{g}$, i.e., $v$ is primitive, thus
$$
(v-\eta\epsilon(v))\circledast(h)= v \circledast h = vh-hv \in [H,H].
$$
The associativity of the operation implies that
$(h'-\eta\epsilon(h'))\circledast(h) \in [H,H]$ for any element $h' \in U(\mathfrak{g})$.
The other inclusion follows by a induction since $[H,H]$ is generated as a vector space
by $[h',h]$ where $h'$ is primitive.
%
\end{proof}

%

\begin{lemma}
The representation $\rho_H\colon\Aut(F_n)\to \End_{\F}(H^{\otimes n})$ induces a representation
$$
\rho_H\colon\Aut(F_n)\to \End_{\F}(\overline{H^{\otimes n}}).
$$
\end{lemma}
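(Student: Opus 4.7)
The plan is to show that the subspace $\widetilde{H^{\otimes n}}$ is invariant under the $\Aut(F_n)$-action $\rho_H$, which immediately gives the induced representation on the quotient $\overline{H^{\otimes n}} = H^{\otimes n}/\widetilde{H^{\otimes n}}$. The main tool is the evaluation isomorphism $\HomH(F_n;H)\cong H^{\otimes n}$ from Theorem~\ref{thm:onto}, together with Lemma~\ref{lem:conjonHomH}, which says that the conjugation operation $h_\circledast$ is compatible with the $\HomH$ structure.

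First I would transfer the conjugation action and the $\Aut(F_n)$-action to $\HomH(F_n;H)$. For any $h\in H$, Lemma~\ref{lem:conjonHomH} gives a map $h_\circledast\colon \HomH(F_n;H)\to \HomH(F_n;H)$ sending $\varphi$ to $h_\circledast\circ\varphi$; under evaluation at $(x_1,\dots,x_n)$, this corresponds exactly to the map $h_\circledast\colon H^{\otimes n}\to H^{\otimes n}$. Therefore $\widetilde{H^{\otimes n}}$ is the image under evaluation of the subspace $\widetilde{\mathcal H}\subset \HomH(F_n;H)$ spanned by $(h-\eta\epsilon(h))_\circledast\circ\varphi$ with $h\in H$ and $\varphi\in \HomH(F_n;H)$.

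The key step is to observe that the $\Aut(F_n)$-action on $\HomH(F_n;H)$ commutes with $h_\circledast$ for every $h\in H$. Indeed, for $\psi\in \Aut(F_n)$,
\begin{align*}
\bigl(\psi\cdot(h_\circledast\circ\varphi)\bigr)_k(g_1,\dots,g_k) &= (h_\circledast\circ\varphi)_k(g_1^\psi,\dots,g_k^\psi) \\
&= h\circledast \varphi_k(g_1^\psi,\dots,g_k^\psi) \\
&= h\circledast (\psi\cdot\varphi)_k(g_1,\dots,g_k) \\
&= \bigl(h_\circledast\circ(\psi\cdot\varphi)\bigr)_k(g_1,\dots,g_k),
\end{align*}
so $\psi\cdot(h_\circledast\circ\varphi)=h_\circledast\circ(\psi\cdot\varphi)$. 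Applying this with $h$ replaced by $h-\eta\epsilon(h)$ shows that $\psi$ sends a generator of $\widetilde{\mathcal H}$ to another generator of $\widetilde{\mathcal H}$. Hence $\widetilde{\mathcal H}$ is $\Aut(F_n)$-invariant, and transferring back via the evaluation isomorphism we conclude that $\widetilde{H^{\otimes n}}$ is $\rho_H(\Aut(F_n))$-invariant.

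There is no real obstacle here once the conjugation action is rephrased in terms of $\HomH$: the whole point of Lemma~\ref{lem:conjonHomH} is to make conjugation a natural operation in the $\HomH$-picture, where it manifestly commutes with changes of the group-variables. The only thing to double-check is that $h$ is allowed to depend on the argument, but since the generators in the definition of $\widetilde{H^{\otimes n}}$ have $h$ fixed and only $\mathbf{h}=h_1\otimes\cdots\otimes h_n$ varies, the calculation above is all that is needed.
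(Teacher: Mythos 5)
Your proposal is correct and follows essentially the same route as the paper: the paper's proof likewise rests on the observation that the $\Aut(F_n)$-action on $\HomH(F_n;H)$ (precomposition on the group arguments) commutes with the conjugation operator $h_\circledast$, and then applies this with $h-\eta\epsilon(h)$ to see that $\widetilde{H^{\otimes n}}$ is invariant. You merely spell out the commutation computation that the paper states as an observation.
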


\begin{proof}
Observe that the actions of $\Aut(G)$ on $\HomH(G,H)$ commutes with the conjugation action of $H$, i.e.,
$$
\psi\cdot(h\circledast (h_1\otimes\cdots\otimes h_n))=h\circledast (\psi\cdot (h_1\otimes\cdots\otimes h_n)).
$$
Therefore
$$
\psi\cdot((h - \eta\epsilon(h) ) \circledast (h_1\otimes\cdots\otimes h_n))= (h - \eta\epsilon(h) )\circledast (\psi\cdot (h_1\otimes\cdots\otimes h_n)),
$$
i.e.,
the kernel $\widetilde{H^{\otimes n}}$ of $H^{\otimes n} \to \overline{H^{\otimes n}}$ is invariant under the action of $\Aut(F_n)$.
\end{proof}

\begin{remark}
It is easy to see the map $m \otimes \id^n : H^{\otimes (n+2)} \to H^{\otimes (n+1)}$ induces a map
$\overline{m \otimes \id^n} : \overline{H^{\otimes (n+2)}} \to \overline{H^{\otimes (n+1)}}$ and similarly for the multiplications. I.e.,  there is a canonical functor $\mathfrak{F}_{\overline H} : \mathcal{HOPF} \to \overline{\End_{\F}}(H)$ which sends the object $n$ to $\overline{H^{\otimes n}}$.
Using Remark~\ref{rem:action-nt} this leads to an action of $\Aut(F_n)$ on $\overline{H^{\otimes n}}$.
%
\end{remark}


\begin{proposition}
$\mathrm{Inn}(F_n)$ acts trivially on $\overline{H^{\otimes n}}$, which is therefore an $\Out(F_n)$-module with induced representation $\rho_H\colon \Out(F_n)\to \overline{H^{\otimes n}}$.
\end{proposition}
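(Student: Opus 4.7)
The plan is to reduce the proposition to showing that each generator $c_g$ of $\mathrm{Inn}(F_n)$ (conjugation by $g \in F_n$, with $c_g(x) = gxg^{-1}$) acts trivially on $\overline{H^{\otimes n}}$, and then to extract this directly from Remark~\ref{rem:conjugation} combined with axiom (3) of Definition~\ref{def:homh}.

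Given $\mathbf{h} \in H^{\otimes n}$, let $\varphi \in \HomH(F_n; H)$ be its preimage under the isomorphism of Theorem~\ref{thm:onto}. Under the paper's convention $x^\psi = \psi^{-1}(x)$, I would compute
$$
c_g \cdot \mathbf{h} = \varphi_n(g^{-1} x_1 g, \ldots, g^{-1} x_n g).
$$
Applying Remark~\ref{rem:conjugation} with $g_0 = g^{-1}$ expresses this as $h \circledast (h_1 \otimes \cdots \otimes h_n)$, where the Sweedler-notation tensor $h \otimes h_1 \otimes \cdots \otimes h_n$ literally stands for the element $\varphi_{n+1}(g^{-1}, x_1, \ldots, x_n) \in H^{\otimes(n+1)}$.

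The crux is then to observe that, modulo $\widetilde{H^{\otimes n}}$, the defining relation $(h - \eta\epsilon(h)) \circledast \mathbf{k} \equiv 0$ together with the fact that $1_H \circledast \mathbf{k} = \mathbf{k}$ yields
$$
h \circledast (h_1 \otimes \cdots \otimes h_n) \equiv \epsilon(h)\cdot(h_1 \otimes \cdots \otimes h_n) = (\epsilon \otimes \id^n)\bigl(\varphi_{n+1}(g^{-1}, x_1, \ldots, x_n)\bigr).
$$
By axiom (3) of Definition~\ref{def:homh} this last expression equals $\varphi_n(x_1, \ldots, x_n) = \mathbf{h}$, finishing the argument, since $\mathrm{Inn}(F_n)$ is generated by such $c_g$.

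I expect no serious obstacle beyond notational bookkeeping. The only subtlety is that Remark~\ref{rem:conjugation} is stated with the opposite convention $g_i^{g_0} = g_0 g_i g_0^{-1}$, which is why one must substitute $g_0 = g^{-1}$ to match the $\Aut(F_n)$-action convention. Conceptually, the proof is a clean manifestation of the fact that $\overline{H^{\otimes n}}$ was \emph{designed} so that the conjugation action $h \circledast (\cdot)$ collapses to $\epsilon$-weighted multiplication; the role of axiom (3) is precisely to ensure that the Sweedler expression $\epsilon(h)\cdot(h_1 \otimes \cdots \otimes h_n)$ recovers $\mathbf{h}$ exactly, rather than only a scalar multiple of it—which would have been the (wrong) conclusion had one mistakenly interpreted $\epsilon(h)$ as an independent constant $\epsilon(\varphi_1(g^{-1}))$.
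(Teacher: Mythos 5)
Your proof is correct and follows essentially the same route as the paper: both pass to $\HomH(F_n;H)$, use Remark~\ref{rem:conjugation} to write the conjugated value as $C_\circledast$ applied to $\varphi_{n+1}(g^{\pm 1},x_1,\dots,x_n)$, and then observe that $(C_\circledast-\epsilon\otimes\id^{\otimes n})$ lands in $\widetilde{H^{\otimes n}}$ while axiom (3) identifies $(\epsilon\otimes\id^{\otimes n})\varphi_{n+1}$ with $\varphi_n$. The only differences are cosmetic (your explicit $g^{-1}$ versus the paper's $g$, and your careful remark about not misreading $\epsilon(h)$ as a standalone scalar, both of which are fine).
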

\begin{proof}
As before it is easier instead of $H^{\otimes n}$ to consider $\HomH(F_n;H)$. Let $\varphi \in \HomH(F_n;H)$ and
$\psi \in \mathrm{Inn}(F_n)$ is given by conjugation with $g$. As mentioned in Remark~\ref{rem:conjugation}
$$
(\psi \cdot \varphi)_n(g_1,\dots,g_n) =
\varphi_n(g_1^g,\dots,g_n^g)=
C_\circledast(\varphi_{n+1}(g, g_1,\dots,g_n)).
$$
By the definition of $\HomH$ we have
$\epsilon \otimes \id^{\otimes n}(\varphi_{n+1}(g, g_1,\dots,g_n)) = \varphi_n(g_1,\dots,g_n)$
Therefore
$
(\psi \cdot \varphi)_n(g_1,\dots,g_n) - \varphi_n(g_1,\dots,g_n)  =
(C_\circledast - \epsilon \otimes \id^{\otimes n})\varphi_{n+1}(g, g_1,\dots,g_n)
$
is in $\widetilde{H^{\otimes n}}$ this the action of $\mathrm{Inn}(F_n)$ on $\overline{H^{\otimes n}}$ is trivial.
\end{proof}

\begin{remark}
Let $\widehat{H^{\otimes n}}$ be the maximal quotient of $H^{\otimes n}$ with an induced
action of $\Out(F_n)$. This is actually larger than $\overline{H^{\otimes n}}$ for general $H$.
For example, when $n=1$, the group $\mathrm{Inn}(F_1)$ is trivial and  $\Out(F_1)= \Z_2$ acts on $H^{\otimes 1}=H$ via the antipode. So $\widehat{H^{\otimes 1}}=H^{\otimes 1}$.
Yet $\overline{H^{\otimes 1}}=H/[H,H]$ is a nontrivial quotient (for most $H$).

In general, consider the exact sequence $1\to F_n\to \Aut(F_n)\to \Out(F_n)\to 1$. Then  $\widehat{H^{\otimes n}}$ will be the coinvariants of $H^{\otimes n}$ under the action of $\rho_H|_{F_n}$. For example, when $n=2$,
$$
\widehat{H^{\otimes 2}}=H^{\otimes 2}/\{h_1\otimes h_2=h_1'\otimes S(h_1'')h_2 h_1''', h_1\otimes h_2=S(h_2')h_1h_2''\otimes h_2'''\}.
$$
A direct computation shows that there is a difference between $\widehat{T(V)^{\otimes 2}}$ and $\overline{T(V)^{\otimes 2}}$.

We expect that the same happens for $n\geq 2$.

%
\end{remark}

Recall that if $H = U(\mathfrak{g})$ the PBW isomorphism gives that the associated graded of the module $H^{\otimes n}$ is
$\Sym(\mathfrak{g} \otimes \F^n)$. This leads to the natural question what is the associated graded of $\overline{H^{\otimes n}}$, which we now discuss.

Consider the adjoint action of $\mathfrak{g}$ on itself, i.e., $\ad(X)(Y) =[X,Y]$.
This induces an adjoint action of $\mathfrak{g}$ on $\mathfrak{g} \otimes \F^n$ which can be extended to
action on $\Sym(\mathfrak{g} \otimes \F^n)$ using the Leibniz rule.

\begin{lemma}
The associated graded of the module $\overline{H^{\otimes n}}$ is the quotient of $\Sym(\mathfrak{g} \otimes \F^n)$
by the image of the adjoint action.
Moreover for any $f \in \Sym(\mathfrak{g} \otimes \F^n)$, we have $\sigma(\ad(X)(f)) \in \widetilde{H^{\otimes n}}$.
Here we abuse the notation and use $\sigma$ to denote the map $\Sym(\mathfrak{g} \otimes \F^n) \to H^{\otimes n}$.
\end{lemma}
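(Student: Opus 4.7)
The plan is to identify $\widetilde{H^{\otimes n}}$ with the image of an explicit adjoint $\mathfrak{g}$-action on $H^{\otimes n}$, and then transport this identification across $\sigma$ to $\Sym(\mathfrak{g}\otimes \F^n)$.

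First, I would directly compute the action of a primitive element $X\in\mathfrak{g}$ via $\circledast$. Using $\Delta(X)=X\otimes 1+1\otimes X$ (so $\Delta^{2n-1}(X)$ is the sum over the $2n$ ways of placing $X$ in a single slot with units elsewhere) together with $S(X)=-X$, the definition of $\circledast$ collapses to
$$X\circledast(h_1\otimes\cdots\otimes h_n)=\sum_{i=1}^n h_1\otimes\cdots\otimes[X,h_i]\otimes\cdots\otimes h_n,$$
i.e.\ $X\circledast(-)$ is the adjoint action of $\mathfrak{g}$ on $H^{\otimes n}$, extended componentwise via Leibniz. Combined with Lemma~\ref{lem:primgen}, this gives $\widetilde{H^{\otimes n}}=\ad(\mathfrak{g})\cdot H^{\otimes n}$.

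Next, I would verify that the PBW symmetrization $\sigma\colon\Sym(\mathfrak{g})\to U(\mathfrak{g})$ is $\mathfrak{g}$-equivariant with respect to the two adjoint actions. Since $[X,-]$ is a derivation of $U(\mathfrak{g})$ and $\ad(X)$ is a derivation of $\Sym(\mathfrak{g})$, the explicit symmetrization formula shows that both $[X,\sigma(Y_1\cdots Y_k)]$ and $\sigma(\ad(X)(Y_1\cdots Y_k))$ expand to the same symmetrized sum (a straightforward combinatorial matching of index sets). Tensoring yields a $\mathfrak{g}$-equivariant vector space isomorphism $\sigma\colon\Sym(\mathfrak{g}\otimes\F^n)\to H^{\otimes n}$, where the adjoint action on $\Sym(\mathfrak{g}\otimes \F^n)\cong\Sym(\mathfrak{g})^{\otimes n}$ is likewise a sum over tensor factors by Leibniz. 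The ``moreover'' clause of the lemma is then immediate: for any $f\in\Sym(\mathfrak{g}\otimes\F^n)$ and $X\in\mathfrak{g}$,
$$\sigma(\ad(X)(f))=X\circledast\sigma(f)\in\widetilde{H^{\otimes n}}.$$

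For the associated graded statement, observe that both the grading on $\Sym(\mathfrak{g}\otimes\F^n)$ and the filtration $\{V_i\}$ on $H^{\otimes n}$ are preserved by the adjoint $\mathfrak{g}$-action (for the latter, because $[X,U_k]\subset U_k$). Under $\sigma$, the subspaces $\bigoplus_{j\leq i}\Sym^j$ map onto $V_i$, and the induced map $\Sym^i(\mathfrak{g}\otimes\F^n)\to V_i/V_{i-1}$ is the usual PBW isomorphism. Combined with the $\mathfrak{g}$-equivariance just established, this carries $\ad(\mathfrak{g})\cdot\Sym^i(\mathfrak{g}\otimes\F^n)$ bijectively onto the image of $\widetilde{H^{\otimes n}}\cap V_i$ in $V_i/V_{i-1}$. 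Summing over $i$ identifies the associated graded of $\overline{H^{\otimes n}}$ with $\Sym(\mathfrak{g}\otimes\F^n)/\ad(\mathfrak{g})\cdot\Sym(\mathfrak{g}\otimes\F^n)$.

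The main obstacle is the explicit calculation in paragraph one together with the combinatorial identity establishing the exact (not merely associated-graded) equivariance of $\sigma$; once these are in hand, the associated graded identification is formal. If the equivariance can only be shown modulo lower filtration terms, one instead argues by induction on the degree, using the PBW iso on each graded piece.
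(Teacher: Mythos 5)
Your proposal is correct and follows essentially the same route as the paper: the key step in both is the identity $\sigma(\ad(X)(f)) = X \circledast \sigma(f)$ for primitive $X$ (which you justify by computing $X\circledast$ explicitly as the componentwise adjoint action and checking that the symmetrization map intertwines derivations, details the paper leaves as an ``observe''), combined with Lemma~\ref{lem:primgen} to get surjectivity onto $\widetilde{V_i}$ and the PBW filtration argument to pass to the associated graded. No substantive difference in strategy.
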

\begin{proof}
The filtration $V_i$ of $H^{\otimes n}$ induces a filtration $\overline{V_i}$ of $\overline{H^{\otimes n}}$. Indeed
let $\widetilde{V_i}=V_i\cap \widetilde{H^{\otimes n}}$. Then $\overline{V_i}=V_i/\widetilde{V_i}$.

We wish to show that the isomorphism $$\sigma_i\colon \Sym^i(\mathfrak g\otimes \F^n)\overset{\cong}{\longrightarrow} V_i/V_{i-1}$$ takes the subspace $\im(\ad)$ to $\widetilde{V_i}/\widetilde{V}_{i-1}$, which would then imply that it induces an isomorphism
$$ \overline{\sigma_i}\colon \Sym^i(\mathfrak g\otimes \F^n)/\im(\ad)\overset{\cong}{\longrightarrow} \overline{V_i}/\overline{V_{i-1}}$$

If $X\in\mathfrak g$, then observe that $\sigma(\ad(X)(f)) = X \circledast \sigma(f)$ which together with $\epsilon(X)=0$ implies that
$\sigma(\ad(X)(f)) \in \widetilde{H^{\otimes n}}$, and thus that $\sigma_i(\im(\ad))\subset \widetilde{V_i}$.
On the other hand, by Lemma~\ref{lem:primgen},  $\widetilde{V_i}$ is generated by elements $X\circledast (h_1\otimes \cdots\otimes h_n)$ for $X\in\mathfrak g$. Thus $\sigma_i(\im(\ad))=\widetilde{V_i}$, since we can choose $f$ such that $\sigma_i(f)=h_1\otimes \cdots\otimes h_n$ and map $\sigma(\ad(X)(f))$ to
$X\circledast (h_1\otimes \cdots\otimes h_n)$.
\end{proof}

Let $\overline{\Sym(\mathfrak{g} \otimes \F^n)}$ the quotient of $\Sym(\mathfrak{g} \otimes \F^n)$ by the image of the adjoint action and let  $\overline{\sigma}$ denote the map $\overline{\Sym(\mathfrak{g} \otimes \F^n)}$ to
$\overline{H^{\otimes n}}$.
Clearly the action of $\GL_n(\Z)$ on $\Sym(\mathfrak{g} \otimes \F^n)$ give rise to an action
$\overline{\Sym(\mathfrak{g} \otimes \F^n)}$. It is clear that this is associated graded of the action of
$\Out(F_n)$ on $\overline{H^{\otimes n}}$.

In the case $n=2$ the group $\Out(F_2)$ coincides with $\GL_2(\Z)$ and one can expect that
$\overline{\Sym(\mathfrak{g} \otimes \F^2)}$ and $\overline{H^{\otimes 2}}$ are isomorphic as $\GL_2(\Z)$ modules.
This is not the case for general $\mathfrak g$ however:

\begin{theorem}
\label{thm:class2}
If $\mathfrak{g}$ is nilpotent of class $2$ then $\overline{\sigma}$  is also a  $\GL_2(\Z)$ module isomorphism.
\end{theorem}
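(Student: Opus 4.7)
The plan is to reduce the theorem to checking equivariance of $\overline{\sigma}$ on a generating set for $\Aut(F_2)$; this is sufficient because the previous lemma already shows $\overline{\sigma}$ is a vector space isomorphism via its associated graded, and both the source and target carry $\Aut(F_2)$-actions factoring through $\Out(F_2) = \GL_2(\Z)$. One may take the generators of $\Aut(F_2)$ to be the swap $\sigma_{12}$, the inversion $g_1 \mapsto g_1^{-1}$, and the Nielsen move $\eta: g_1 \mapsto g_2^{-1}g_1,\ g_2 \mapsto g_2^{-1}$. For the first two, the actions match under $\overline{\sigma}$ in a transparent way.

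The remaining content is equivariance for $\eta$, whose action on $H^{\otimes 2}$ is $h_1 \otimes h_2 \mapsto S(h_{2(1)})\, h_1 \otimes S(h_{2(2)})$. By Claim~\ref{cl:powers} and polarization, it suffices to check on elements of the form $X^n \otimes Y^k$ with $X,Y \in \mathfrak{g}$. A direct expansion using the coproduct and antipode identities on powers of primitives gives
\[
\rho_H(\eta)(X^n \otimes Y^k) = (-1)^k \sum_{j=0}^k \binom{k}{j}\, Y^j X^n \otimes Y^{k-j},
\]
while inspection of $\rho_H(\eta)$ on degree-$1$ elements identifies the matrix of $\rho_S(\eta)$ on $\mathfrak{g} \otimes \F^2$ as $\left(\begin{smallmatrix} 1 & -1 \\ 0 & -1 \end{smallmatrix}\right)$, yielding
\[
\overline{\sigma}(\rho_S(\eta)(X^n \otimes Y^k)) = (-1)^k \sum_{j=0}^k \binom{k}{j}\, \sigma(X^n Y^j) \otimes Y^{k-j}.
\]
By Lemma~\ref{lm:class2}, the discrepancy $Y^j X^n - \sigma(X^n Y^j)$ is a linear combination of $\sigma(X^{n-i} Y^{j-i} [X,Y]^i)$ with $i \geq 1$, so equivariance reduces to showing each correction term $\sigma(X^{n-i} Y^{j-i}[X,Y]^i) \otimes Y^{k-j}$ lies in $\widetilde{H^{\otimes 2}}$.

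This last step is the main obstacle, but class-$2$ nilpotence makes it tractable: centrality of $[X,Y]$ together with the derivation property of $\ad(Y)$ yields the identity $[Y, \sigma(X^a Y^b)] = -a\,[X,Y]\,\sigma(X^{a-1} Y^b)$. Combining this with $[Y, Y^{k-j}] = 0$ produces
\[
Y \circledast\!\left(\tfrac{-1}{n-i+1}\,[X,Y]^{i-1}\,\sigma(X^{n-i+1} Y^{j-i}) \otimes Y^{k-j}\right) = \sigma(X^{n-i} Y^{j-i}[X,Y]^i) \otimes Y^{k-j},
\]
exhibiting the correction as the image of a conjugation by $Y \in \mathfrak{g}$, hence in $\widetilde{H^{\otimes 2}}$ by Lemma~\ref{lem:primgen}. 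The centrality of $[X,Y]$ in class $2$ is precisely what keeps every intermediate commutator calculation a clean application of the Leibniz rule.
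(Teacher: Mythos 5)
Your proof is correct and follows essentially the same route as the paper's: reduce to checking equivariance for the single nontrivial generator $\eta$, expand its action on $X^n\otimes Y^k$ via the coproduct of powers of primitives, apply Lemma~\ref{lm:class2}, and absorb the $[X,Y]$-correction terms into $\widetilde{H^{\otimes 2}}$ by exhibiting them as conjugations by $Y$ (using centrality of $[X,Y]$ and $[Y,Y^{k-j}]=0$). The only cosmetic difference is that you take $\eta$ to be the Nielsen involution $g_2\mapsto g_2^{-1}$ while the paper's proof uses the transvection fixing $g_2$, which accounts for your overall $(-1)^k$ versus the paper's termwise signs; both generate $\GL_2(\Z)$ together with the signed permutations, so this changes nothing essential.
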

\begin{proof}
It is sufficient to verify that $\overline{\sigma}$ commutes with the generators of the group $\Out(F_2)$. This is trivial
for the elements of the signed permutation group, so we only need to verify for the element $\eta$ which fixes $g_2$ and sends $g_1$ to $g_2^{-1}g_1$.
By Claim~\ref{cl:powers} the image of $\Sym^i(\mathfrak{g}) \otimes \Sym^j(\mathfrak{g})$ under $\sigma$ inside
$H^{\otimes 2}$ is generated by elements of the form
$X^i \otimes Y^j$, with $X,Y \in \mathfrak{g}$.
By the definition of $\eta$ and the action of $\Aut(F_2)$ we have
$$
\eta(X^i \otimes Y^j) = (m \otimes \id)(S \otimes \id\otimes \id)(\tau \otimes \id)(\id \otimes \Delta)(X^i \otimes Y^j),
$$
since $\Delta(Y^j) = \sum_k {j \choose k} Y^k \otimes Y^{j-k}$ this gives
$$
\eta(X^i \otimes Y^j) = \sum_k (-1)^k {j \choose k} Y^k X^i \otimes Y^{j-k}.
$$

By Lemma~\ref{lm:class2} there exists constants $d_{i,p,k}$ such that
$$
Y^k X^i = \sigma(X^iY^k) + \sum  d_{i,p,k} \sigma(X^{i-p}Y^{k-p}[X,Y]^p).
$$
Observe that $\sigma(X^{i-p}Y^{k-p}[X,Y]^p) \otimes Y^{j-k}$ lies in $\widetilde{H^{\otimes 2}}$ because
$$
(i-p+1)\sigma(X^{i-p}Y^{k-p}[X,Y]^p) \otimes Y^{j-k} = Y \circledast (\sigma(X^{i-p+1}Y^{k-p}[X,Y]^{p-1}) \otimes Y^{j-k}).
$$
Therefore
$$
\eta(X^i \otimes Y^j) =
\sum_k (-1)^k {j \choose k} \sigma(Y^k X^i) \otimes Y^{j-k}
\,\, \mathrm{mod} \,\, \widetilde{H^{\otimes 2}},
$$
where the right side is the same as the action of $\left(\begin{matrix}1 & -1 \\ 0 & 1\end{matrix}\right)$ on
$X^i \otimes Y^j$.
Thus the action of $\overline{\sigma}$ commutes with the action of $\eta$.
\end{proof}
\begin{remark}
\label{rem:noextension}
This is not true for an arbitrary Lie algebra $\mathfrak{g}$ -
since the elements
$\scriptstyle \left(\begin{smallmatrix}1 & 1 \\ 0 & 1\end{smallmatrix}\right)$ and
$\scriptstyle \left(\begin{smallmatrix}1 & 0 \\ 1 & 1\end{smallmatrix}\right)$
act as locally unipotent elements on $\overline{H^{\otimes 2}}$ one can define
$$
E = \log \rho_H \left(\begin{matrix}1 & 1 \\ 0 & 1\end{matrix}\right)
\quad \mbox{and} \quad
F = \log \rho_H \left(\begin{matrix}1 & 0 \\ 1 & 1\end{matrix}\right)
$$
using formal power series. These series converge since
$ \rho_H \left(\begin{matrix}1 & 1 \\ 0 & 1\end{matrix}\right) -\id $
is a locally nilpotent operator on $U(\mathfrak{g})\otimes U(\mathfrak{g})$.
If the action of $\SL_2(\Z)$ extends to $\SL_2(\F)$ then $E,F$ would satisfy
the defining relations of $\sl_2$. However it is not the case -- it is not hard to show that
$$
(\id \otimes \eta )\left([[E,F],E] - 2 E \right) (x^3 \otimes y^3) = 24 [[x,y],y][[x,y],x]
$$
and the last element is non-zero in $\overline{H^{\otimes 1}}$ for some $\mathfrak{g}$.
\end{remark}

\section{Graph homology}
\label{sec:graph_homology}

Recall from~\cite{CV} that one can define a graph complex $\cG_\cO$ for any cyclic operad $\cO$ by putting elements of $\cO\arity{|v|}$ at each vertex $v$ of a graph and identifying the i/o slots with the adjacent edges. In this definition, the graph may have bivalent vertices but no univalent or isolated vertices, since the operad $\cO$ is assumed not to have anything in arity $-1$ and $0$.
Similarly one can define the based graph complex $\cGB_\cO$, where the graphs have a distinguished vertex $b$ and the element at this vertex lies in $\cO\arity{|b|+1}$ and one of the i/o slots is associated with the ``base point''.

These complexes are graded by the number of vertices of the underlying graph and the boundary operator is induced by contracting edges of the underlying graph.  Let $\cG_{\cO}^{(n)}$ and $\cGB_{\cO}^{(n)}$ be the subcomplexes spanned by $\cO$-colored connected graphs of rank $n$.
In this section we will study $H_\bullet(\cG_{H\Lie})$ and $H_\bullet(\cGB_{H\Lie})$. It is clear that the rank $0$ parts of $\cG_{H\Lie}$ and $\cGB_{H\Lie}$ are trivial.

\begin{definition}
Let $\overline{\cG_{H\Lie}}$ denote the quotient of the graph complex for $H\Lie$ where the elements in $H$ are allowed to slide through the edges, i.e., the following graphs in $\cG_{H\Lie}$ are equivalent in $\overline{\cG_{H\Lie}}$.
\begin{center}
\begin{minipage}{5.5cm}
\begin{tikzpicture}
\node[hopf](a){$h$};
\node[empty](b)[left of=a, node distance=.8cm]{} edge (a);
\node[empty](c)[above left of=b, node distance=.8cm]{$\ddots$} edge (b);
\node[empty](d)[below left of=b, node distance=.8cm]{$\Ddots$} edge (b);
\node[empty](e)[right of=a, node distance=.8cm]{} edge (a);
\node[empty](f)[right of=e, node distance=1.2cm]{};
\begin{scope}[decoration={markings,mark = at position 0.5 with {\arrow{stealth}}}]
\draw[decorate] (e) to (f);
\end{scope}
\draw[densely dashed] (e) to (f);
\node[empty](g)[right of=f, node distance=.8cm]{} edge (f);
\node[empty](h)[above right of =g, node distance=.8cm]{$\Ddots$} edge (g);
\node[empty](i)[below right of =g, node distance=.8cm]{$\ddots$} edge (g);
\end{tikzpicture}
\end{minipage}
$=$\,\,\,
\begin{minipage}{5.5cm}
\begin{tikzpicture}
\node[hopf](a){$h$};
\node[empty](b)[right of=a, node distance=.8cm]{} edge (a);
\node[empty](c)[above right of=b, node distance=.8cm]{$\Ddots$} edge (b);
\node[empty](d)[below right of=b, node distance=.8cm]{$\ddots$} edge (b);
\node[empty](e)[left of=a, node distance=.8cm]{} edge (a);
\node[empty](f)[left of=e, node distance=1.2cm]{};
\begin{scope}[decoration={markings,mark = at position 0.5 with {\arrow{stealth}}}]
\draw[decorate] (f) to (e);
\end{scope}
\draw[densely dashed] (e) to (f);
\node[empty](g)[left of=f, node distance=.8cm]{} edge (f);
\node[empty](h)[above left of =g, node distance=.8cm]{$\ddots$} edge (g);
\node[empty](i)[below left of =g, node distance=.8cm]{$\Ddots$} edge (g);
\end{tikzpicture}
\end{minipage}
\end{center}
Similarly we can define $\overline{\cG_{H\Lie}^{(n)}}$, $\overline{\cGB_{H\Lie}}$  and $\overline{\cGB_{H\Lie}^{(n)}}$.
It is clear that the quotient map $\cG_{H\Lie} \to \overline{\cG_{H\Lie}}$ preserves the differential and induces a map between the homologies.
\end{definition}

\subsection{Comparison to hairy Lie graph homology}
In this section we compare our constructions with that of ``hairy graph homology"  ~\cite{CKV1, CKV2}.
Let $\hairy\Lie$  denote the hairy Lie graph complex, which is spanned by graphs whose vertices have elements of $\Lie_V$ coloring them. In other words,
\begin{proposition}\label{prop:hairyG}
There is an isomorphism of chain complexes $\hairy\Lie \cong \mathcal G_{\Lie_V}$.
\end{proposition}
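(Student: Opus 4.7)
The plan is to observe that this proposition is essentially a matter of unwinding the two definitions, once the right bijection on generators is set up. A generator of $\mathcal G_{\Lie_V}$ is, by the construction of $\mathcal G_\cO$, a graph $\Gamma$ (with no isolated or univalent vertices) together with, at each vertex $v$, an element of $\Lie_V\arity{|v|}$ --- namely a Lie-operad tree carrying $|v|$ numbered i/o univalent slots (identified with the edges of $\Gamma$ at $v$) plus additional univalent leaves labeled by elements of $V$. The hairy Lie graph complex $\hairy\Lie$ of \cite{CKV1,CKV2}, as recalled in the statement, is spanned by precisely the same combinatorial data.

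First I would make the tautological identification on generators. The triple (graph $\Gamma$, decoration of each vertex $v$ by an element of $\Lie_V\arity{|v|}$, identification of i/o slots with the adjacent edges of $\Gamma$) simultaneously defines a generator of $\mathcal G_{\Lie_V}$ and a hairy Lie graph. Pictorially the map $\Phi\colon \mathcal G_{\Lie_V} \to \hairy\Lie$ ``blows up'' each $\Lie_V$-labeled vertex into its underlying Lie tree, using the i/o slots as attachment sites to the edges of $\Gamma$ and leaving the $V$-labeled leaves as hairs; the inverse $\Psi$ simply reads off the Lie-tree structure hanging at each vertex.

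Next I would verify that the two descriptions satisfy the same local relations at each vertex: multilinearity in the $\Lie_V$-label; equivariance under permutations of the i/o slots, which on the $\mathcal G_{\Lie_V}$ side is the $\sym{|v|}$-action on $\Lie_V\arity{|v|}$ coming from the antipode on the cyclic operad $\Lie$ (as in Section~\ref{sec:Hopf_operad}); and the IHX and antisymmetry relations inside each Lie tree. Each relation is visible in the same form on both sides.

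Finally I would check that the differentials match. In $\mathcal G_\cO$ the boundary contracts an edge $e$ of $\Gamma$ connecting vertices $v_1,v_2$ and replaces the two labels $t_{v_1}\in \Lie_V\arity{|v_1|}$ and $t_{v_2}\in \Lie_V\arity{|v_2|}$ by their cyclic operad composition along the i/o slots corresponding to $e$. In the hairy Lie graph complex the boundary contracts a non-hair edge, joining the two Lie trees across that edge into a single Lie tree. By the construction of the cyclic operad $\Lie_V$ these are the same operation. The main obstacle, and essentially the only bookkeeping issue, will be matching the orientation and sign conventions: the cyclic ordering at each trivalent Lie vertex has to be matched with the $\sym{3}$-sign on $\Lie\arity{3}$ coming from the antipode, and the edge-orderings used to define the two boundaries have to be reconciled. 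Once these conventions are aligned, $\Phi$ is seen to be a chain isomorphism with inverse $\Psi$.
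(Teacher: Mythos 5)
Your proposal is correct and is essentially the paper's own argument: the paper's proof consists of the single remark that the statement ``is close to the definition given in~\cite{CKV2},'' and your identification of generators, relations, and differentials is precisely the unwinding that remark is gesturing at. The only substantive content in either version is the bookkeeping of orientations and the matching of the cyclic-operad composition with edge contraction, which you correctly flag as the one point requiring care.
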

\begin{proof}
This is close to the definition given in \cite{CKV2}.
\end{proof}

It is convenient also to calculate $ H_*(\mathcal G_{\Lie_V})$ in terms of the $H\Lie$ construction:
\begin{theorem}
For all $n\geq 1$, the chain complex $\mathcal G^{(n)}_{\Lie_V}$ is quasi-isomorphic to the chain complex $\mathcal G^{(n)}_{\Sym(V)\Lie}$.
\end{theorem}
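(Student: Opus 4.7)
My plan is to prove the theorem by exhibiting an explicit chain map and using a filtration argument. First, invoke Proposition~\ref{prop:Lie-iso}, which gives $\overline{\Lie_V}=T(V)\Lie$. Since the graph complex only sees operad arities $\geq 2$ and $\overline{\Lie_V}$ differs from $\Lie_V$ only in arity $0$ and $1$, we may identify $\mathcal G^{(n)}_{\Lie_V}$ with $\mathcal G^{(n)}_{T(V)\Lie}$. The Hopf algebra surjection $\pi\colon T(V)\twoheadrightarrow\Sym(V)$ (abelianization, well-defined because both have $V$ as the primitives) is a morphism of cocommutative Hopf algebras, hence induces a morphism of operads $T(V)\Lie\twoheadrightarrow\Sym(V)\Lie$ and thus a surjective chain map
\[
\pi_*\colon \mathcal G^{(n)}_{T(V)\Lie}\twoheadrightarrow\mathcal G^{(n)}_{\Sym(V)\Lie}.
\]
The task reduces to showing $K=\ker(\pi_*)$ is acyclic.

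Both complexes inherit a grading by total $V$-degree (sum of weights of the bivalent-vertex labels, using the weight grading on $T(V)$ or $\Sym(V)$); the contraction differential and $\pi_*$ both preserve this grading, so we work in fixed total $V$-degree $d$ and fixed rank $n$, where everything is finite-dimensional. To analyze $K$, filter $\mathcal G^{(n)}_{T(V)\Lie}$ by the PBW filtration coming from $T(V)=U(\sL(V))$: an edge-label lies in filtration level $\leq p$ if, under the PBW isomorphism $T(V)\cong \Sym(\sL(V))$ (of coalgebras), it is a sum of products of at most $p$ Lie words. The associated graded replaces $T(V)$-edge-labels by elements of $\Sym(\sL(V))$. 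Now each nontrivial Lie bracket $\ell\in\sL(V)\subset T(V)$ is primitive in $T(V)$, so by the comultiplication relation in $H\Lie$ it can be pushed across any adjacent trivalent $\Lie$-vertex. Running this absorption move, a bracketed label on an edge becomes trivalent-Lie tree structure inside the graph with $V$-labeled bivalent vertices remaining only where the Lie words bottomed out in $V$.

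The heart of the argument is the \emph{absorption lemma}: the inclusion of commutative $V$-labels into $\Sym(\sL(V))$ induces a quasi-isomorphism $\mathcal G^{(n)}_{\Sym(V)\Lie}\xrightarrow{\sim}\mathcal G^{(n)}_{\Sym(\sL(V))\Lie}$. A chain homotopy is built by iterating the sliding-past-a-vertex move: at each step, a bracket $[\ell_1,\ell_2]$ on an edge $e$ adjacent to a trivalent Lie vertex with other edges $e_1,e_2$ is rewritten, via primitivity and comultiplication, as the standard IHX-type combination which can then be expressed as the contraction boundary of a graph with one additional trivalent vertex. Combined with the spectral sequence collapse at $E_2$ coming from the total-$V$-degree grading, this yields that $\pi_*$ is a quasi-isomorphism.

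The main obstacle is the bookkeeping in the absorption lemma: the sliding move produces a \emph{sum} of graphs (over the choice of which adjacent edge to push to), and one must verify that the signs from the IHX/antipode conventions align so that iteratively absorbing deeper brackets actually terminates and gives a well-defined homotopy. One natural sanity check is the simplest case of an edge labeled by a single commutator $[v,w]$: the sliding relation realizes this as a boundary of a graph with one extra trivalent Lie vertex carrying two $V$-hairs, which matches on the nose the corresponding move used in Proposition~\ref{prop:Lie-iso} promoted to the graph-complex setting.
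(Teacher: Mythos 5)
Your proposal has a structural gap, not merely the ``bookkeeping'' you flag at the end. (For context: the paper itself gives no argument here --- its proof is a citation to Section 5.3 of \cite{CKV2} --- so your write-up has to stand entirely on its own.) The problem is that your \emph{absorption lemma} is false, and since your whole strategy is to verify the $E^1$-page comparison for the PBW filtration and then invoke the filtered comparison theorem, the argument collapses with it. Concretely, you assert that the inclusion $\mathcal G^{(n)}_{\Sym(V)\Lie}\hookrightarrow \mathcal G^{(n)}_{\Sym(\sL(V))\Lie}$ is a quasi-isomorphism. But by the theorem of \cite{CKV1} quoted in Section 5 of this paper, for any vector space $W$ and $n\geq 2$ one has $H_1(\mathcal G^{(n)}_{\Sym(W)\Lie})\cong H^{2n-3}(\Out(F_n);\Sym(W\otimes \F^n))\cong \bigoplus_{\lambda} H^{2n-3}(\Out(F_n);\SF{\lambda}(\F^n))\otimes \SF{\lambda}(W)$. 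For $n=2$ the factors $H^{1}(\GL_2(\Z);\SF{\lambda}(\F^2))$ are spaces of modular forms and are nonzero for infinitely many $\lambda$, while $\SF{\lambda}(\sL(V))$ is strictly larger than $\SF{\lambda}(V)$. So $H_1$ of the target of your inclusion is strictly larger than $H_1$ of the source; the map on associated gradeds is not a quasi-isomorphism, and the extra $E^1$-classes must be killed by higher differentials of the spectral sequence --- precisely the differentials that remember that $[v,w]$ equals $vw-wv$ in $T(V)$ rather than being an independent generator. Your argument never engages with these.

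The source of the confusion shows in your ``sanity check.'' In $T(V)\Lie=\overline{\Lie_V}$, the identity equating a commutator-labelled edge with a configuration having one more trivalent vertex and two $V$-hairs is the IHX relation, i.e.\ a chain-level identity \emph{inside a single vertex decoration}: the two pictures are the same element of the chain group, not homologous ones, and no contraction boundary of the graph complex is involved. In $\mathcal G_{\Sym(\sL(V))\Lie}$, where your absorption lemma would have to be proved, that identity is simply unavailable: there $[v,w]$ is an independent primitive generator of a polynomial algebra and $vw-wv=0$, so no IHX rewriting applies and no candidate homotopy is produced. A correct proof must compare $T(V)$-labels with $\Sym(V)$-labels directly (as in \cite{CKV2}, where the comparison is organized so that the symmetrization $\sigma\colon\Sym(V)\to T(V)$, a coalgebra map splitting the abelianization, does the work in each $V$-degree), rather than passing through the intermediate complex $\mathcal G^{(n)}_{\Sym(\sL(V))\Lie}$, whose homology is genuinely bigger.
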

\begin{proof}
See section 5.3 of \cite{CKV2}.
\end{proof}
One of the main results in~\cite{CKV1} gives that the first homology of the connected part of the graph complex of $\Lie_V$ is
\begin{theorem}[\cite{CKV1}]
$$
H_1(\mathcal G^{(r)}_{\Sym(V)\Lie} )=\begin{cases}
\bigoplus_{k \geq 0} \Sym^{2k+1}(V) & r=1\\
H^{\mathrm{vcd}}\left(\Out(F_r); \Sym(V \otimes \F^r) \right) & r>1
\end{cases}
$$
where the action of $\Out(F_r)$ on the symmetric algebra $ \Sym(V \otimes \F^r)$ is via the $\GL_r(\Z)$ action on $\F^r$.
The $r=1$ term corresponds to the Morita trace~\cite{Mo}.
\end{theorem}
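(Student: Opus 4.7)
The plan is to reduce to Kontsevich's theorem identifying rank-$r$ Lie graph homology with the rational cohomology of $\Out(F_r)$. By the preceding quasi-isomorphism $\mathcal G^{(r)}_{\Sym(V)\Lie} \simeq \mathcal G^{(r)}_{\Lie_V}$ together with Proposition~\ref{prop:hairyG} (which identifies $\mathcal G_{\Lie_V}$ with the hairy Lie graph complex $\hairy\Lie$), it suffices to compute $H_1(\hairy\Lie^{(r)})$.

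For $r \geq 2$, I would stratify $\hairy\Lie^{(r)}$ by the number $k$ of $V$-labeled external legs, obtaining the decomposition
\[
\hairy\Lie^{(r)} \;\cong\; \bigoplus_{k\geq 0} \bigl(\hairy^{(r)}_k \otimes V^{\otimes k}\bigr)_{\sym{k}},
\]
where $\hairy^{(r)}_k$ is the complex of rank-$r$ Lie graphs with $k$ numbered but unlabeled hairs. Kontsevich's theorem, extended to the hairy setting in work of Conant--Vogtmann, identifies $\hairy^{(r)}_k$ (after an orientation twist and a degree shift) with a complex computing $H^*(\Out(F_r); (\F^r)^{\otimes k})$, where $\Out(F_r)$ acts through its $\GL_r(\Z)$ quotient on $\F^r$. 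Under the vertex-grading convention, $H_1$ of the hairy complex should correspond to the top cohomology $H^{\mathrm{vcd}} = H^{2r-3}$ of $\Out(F_r)$: the one-vertex summand (rank-$r$ roses with hairs attached) realizes the top-dimensional cells of the spine of (marked) Culler--Vogtmann outer space, and there are no $0$-vertex graphs to receive a boundary from below. Finally, taking $\sym{k}$-coinvariants of $(\F^r)^{\otimes k} \otimes V^{\otimes k} \cong (V\otimes \F^r)^{\otimes k}$ gives $\Sym^k(V \otimes \F^r)$, and summing over $k$ yields $\Sym(V \otimes \F^r)$ as the coefficient module, as claimed.

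For $r=1$, I would compute directly. A connected rank-$1$ graph is a polygon: a cycle with possibly trivalent internal vertices and $V$-labeled hairs. The IHX and antisymmetry relations at the trivalent vertices reduce all such graphs to pure cycles with $V$-labeled hairs, collapsing the complex down to the cyclic word complex $\bigoplus_k (V^{\otimes k})_{\mathbb{Z}_k}$ further quotiented by an orientation-reversing involution coming from the cyclic structure. Because antisymmetry at each bivalent vertex contributes a sign upon reversal, this involution acts on a cyclic word of length $n$ as reversal twisted by $(-1)^{n+1}$, so even-length words are killed while odd-length words survive and symmetrize to $\Sym^{2k+1}(V)$. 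This recovers the Morita trace.

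The main obstacle is tracking the orientation and grading conventions precisely: one has to match the vertex-grading on the graph complex with the dimensional grading on the spine of Culler--Vogtmann outer space so that $H_1$ lands exactly at the virtual cohomological dimension $2r-3$, and verify that the orientation local system appearing in Kontsevich's identification twists the coefficient module in the expected way --- trivially on the $\Sym^k$-summands when $r \geq 2$, but via the sign that selects the odd powers when $r=1$. The rest is essentially a bookkeeping exercise reducing to the established Kontsevich--Culler--Vogtmann correspondence.
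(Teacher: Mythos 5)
The paper itself gives no proof of this statement: it is quoted wholesale from \cite{CKV1}. What the paper does prove is a generalization --- Theorem~\ref{thm:HReduced} together with the isomorphism $H_1(\cG_{H\Lie}^{(r)})\cong H_1(\overline{\cG_{H\Lie}^{(r)}})$ yields $H_1(\cG^{(r)}_{H\Lie})\cong H^{2r-3}(\Out(F_r);\overline{H^{\otimes r}})$ for any cocommutative $H$, and for $H=\Sym(V)$ one has $\overline{\Sym(V)^{\otimes r}}=\Sym(V)^{\otimes r}\cong\Sym(V\otimes\F^r)$ with $\rho_H$ factoring through $\GL_r(\Z)$ --- so your proposal should be measured against that. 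Your $r\geq 2$ outline rests on the same mechanism (collapse a maximal tree, read the hair data off the $r$ complementary edges, let the marking group absorb the choice of tree), merely reorganized by stratifying over the number of hairs first. That is legitimate, but the step you defer to an ``established correspondence'' --- that the $k$-hair complex computes $H^*(\Out(F_r);(\F^r)^{\otimes k})$ with the action factoring through $\GL_r(\Z)$ --- is precisely the content that has to be proved: it is the assertion that hairs transform like elements of $H_1$ of the graph, i.e.\ the maximal-tree argument of Theorem~\ref{thm:HReducedBased}. It is not an input independent of the theorem.

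The $r=1$ case contains a genuine error. Quotienting odd-length cyclic words by the reflection involution does \emph{not} produce $\Sym^{2k+1}(V)$: for $\dim V=2$ the space $(V^{\otimes 5})_{D_{10}}$ has dimension $8$ for the trivial character and $0$ for the other character of $D_{10}$ trivial on rotations, whereas $\dim\Sym^5(\F^2)=6$. ``Cyclic words modulo reflection'' is the answer for $T(V)\Lie$ --- it is the Enomoto--Satoh module $[\overline{T(V)^{\otimes 1}}]_{\Z_2}$, which Figure~\ref{fig:chart} is at pains to distinguish from the Morita module $[\Sym(V)]_{\Z_2}$ (there is only a surjection between them). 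The commutativity of the letters in the $\Sym(V)\Lie$ answer does not come from graph symmetries. It comes either from the chain groups themselves --- in $\cG^{(1)}_{\Sym(V)\Lie}$ the one-vertex chains are already $\Sym(V)$ modulo the antipode, whence $(\id-S)\Sym(V)=\bigoplus_k\Sym^{2k+1}(V)$ exactly as in the dihedral-homology computation of subsection~\ref{ssec:rank_1} --- or, if you insist on working in $\Lie_V$, from the image of the differential on two-vertex graphs, which is where the quasi-isomorphism $\cG^{(1)}_{\Lie_V}\simeq\cG^{(1)}_{\Sym(V)\Lie}$ does its work. You invoke that quasi-isomorphism at the outset and then compute as though the labels still lived in $T(V)$; you must either use the $\Sym(V)$ chain model honestly or account for the boundary relations that commute adjacent letters.
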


In this paper, we consider what happens when we replace $\Sym(V)$ with $T(V)$ in the above construction.
 By Proposition~\ref{prop:Lie-iso}, this is closely related to $\overline{\Lie_V}$.
The rank $0$ part is trivial, since $H\Lie$ does not contain any elements of arity $-1$; the rank $1$ part is related to the Enomoto-Satoh trace~\cite{ES} as discussed in subsection~\ref{ssec:rank_1}.
One might guess that in higher rank it is sufficient to replace the term
$\Sym(V \otimes \F^r)$ with $T(V \otimes \F^r)$
but this is not quite correct since hairs associated with different edges need to commute. Imposing this relation, one obtains $T(V)^{\otimes r}$. However, it is far from clear how $\Out(F_r)$
acts on this space -- in fact it does not act, requiring us to take the quotient $\overline{T(V)^{\otimes r}}$ as we saw in section~\ref{sec:action_on_Hn}.

\subsection{Graph homology in rank 1}
\label{ssec:rank_1}

The graphs of rank $1$ can contain only bivalent vertices and consists of loops of vertices labeled by elements in $H$. Here are examples of typical elements in $\cG_{H\Lie,1}^{(1)}$ and $\cG_{H\Lie,2}^{(1)}$.
\begin{center}
\begin{tikzpicture}
\coordinate(a) at (0,0);
\coordinate(b) at (.6,0);
\coordinate(c) at (-.6,0);
\coordinate(d) at (0,1);
\coordinate(e) at (.6,1);
\coordinate(f) at (-.6,1);
\node[hopf, at=(a)] (A){$h$};
\draw (A) to (b);
\draw (A) to (c);
\draw (e) to (f);
\draw (c) to[in=180, out=180] (f);
\draw (b) to[in=0, out=0] (e);
\end{tikzpicture}
\hspace{3em}
\begin{tikzpicture}
\coordinate(a) at (0,0);
\coordinate(b) at (.6,0);
\coordinate(c) at (-.6,0);
\coordinate(d) at (0,1);
\coordinate(e) at (.6,1);
\coordinate(f) at (-.6,1);
\node[hopf, at=(a)] (A){$h_1$};
\node[hopf, at=(d)] (D){$h_2$};
\draw (A) to (b);
\draw (A) to (c);
\draw (e) to (D) to (f);
\draw (c) to[in=180, out=180] (f);
\draw (b) to[in=0, out=0] (e);
\end{tikzpicture}
\end{center}

%

Here is a typical element of $\cGB_{H\Lie,3}^{(1)}$:

\begin{center}
\begin{minipage}{5cm}
\begin{tikzpicture}
\node[operad](a){$o$};
\node[below of=a]{$\flat$} edge (a);
\node[hopf](b)[right of=a, node distance=1.5cm]{$h_1$} edge(a);
\node[hopf](c)[right of=b, node distance=1.5cm]{$h_2$} edge(b);
\node[empty](d)[above of=a]{};
\node[empty](e)[above of=c]{} edge (d);
\draw (c) to[out=0, in =0] (e);
\draw (a) to[out=180, in=180] (d);
\end{tikzpicture}
\end{minipage}
where\,\,\,
\begin{minipage}{7cm}
\begin{tikzpicture}
\node[operad](a){$o$};
\node[below of=a]{$\flat$} edge (a);
\node[empty](b)[right of=a, node distance=.7cm]{}edge(a);
\node[empty][left of=a, node distance=.7cm]{}edge(a);

\node[empty](c)[right of =b]{};
\node[hopf](d)[right of=c]{$h'$} edge (c);
\node[empty](e)[right of=d]{} edge (d);
\node[empty][below of=e]{$\flat$} edge (e);
\node[hopf](f)[right of=e]{$h''$} edge (e);
\node[empty][right of=f]{} edge(f);

\path (b) to node(eq){$=$} (c);
\end{tikzpicture}
\end{minipage}
\end{center}

\begin{theorem}
The homology $H_k(\cGB_{H\Lie,\cdot}^{(1)})=0$ if $k\geq 2$ and $H_1(\cGB_{H\Lie,\cdot}^{(1)}) = H/H^{\sym{2}}$, where $\sym{2}$ acts via the antipode.
\end{theorem}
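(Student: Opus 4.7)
The plan is to realize $\cGB^{(1)}_{H\Lie,\bullet}$ as a variant of the bar complex of $H$, kill the higher homology by a contracting-homotopy argument, and then read off $H_1$ by direct analysis of the self-loop.

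First, I would give an explicit description of the chain groups. Since a rank-$1$ graph is a loop, a chain in $\cGB^{(1)}_{H\Lie,k}$ for $k \geq 2$ consists of the basepoint (a trivalent Lie vertex with distinguished basepoint slot) together with $k-1$ bivalent $H$-labeled vertices arranged around the loop. Applying the multiplication, unit and cocommutativity relations of $H\Lie$, one identifies this chain group with $(H^{\otimes(k-1)})_{\mathbb{Z}/2}$, where $\mathbb{Z}/2$ acts by loop reversal, sending $(h_1,\dots,h_{k-1})$ to $\pm(S(h_{k-1}),\dots,S(h_1))$ with a sign from bracket antisymmetry and edge reversals. For $k=1$ the self-loop at the basepoint carries a label in $H\Lie\arity{3}\cong H^{\otimes 2}$ which, after the $\sym{2}$ closure that uses the cyclic-operad structure on $H$ (i.e.\ the antipode), I would identify with a quotient of $H$ that eventually turns out to be $H/H^{\sym{2}}$.

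For $k \geq 2$, I would construct a contracting homotopy $s\colon C_k \to C_{k+1}$ by inserting a new bivalent vertex labeled by $1_H$ on the loop, immediately adjacent to the basepoint. The identity $\partial s + s\partial = \mathrm{id}$ follows from the standard bar-complex computation: contracting the edge merging $1_H$ with its loop neighbor $h_1$ gives back the original chain by $1\cdot h_1 = h_1$, while contracting the edge between the basepoint and $1_H$ gives the same original chain but with the opposite edge-ordering sign, so the two ``identity'' contributions cancel; the remaining contractions in $\partial s$ cancel term-by-term with the insertions in $s\partial$ exactly as in the classical bar-resolution argument. This gives $H_k(\cGB^{(1)}_{H\Lie,\bullet}) = 0$ for all $k\geq 2$.

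Finally, to identify $H_1$ I would compute the image of $\partial\colon C_2 \to C_1$ and verify that it equals the subspace collapsed under the projection $C_1 \twoheadrightarrow H/H^{\sym{2}}$. A chain in $C_2$ is a single $H$-vertex labeled by $h$ connected to the basepoint by two loop edges; each basepoint-adjacent contraction merges $h$ into one of the bracket's two loop-slots, and the two resulting terms differ by the bracket antisymmetry combined with the antipode pairing implicit in the self-loop closure, so their sum lands precisely in the antipode-fixed subspace $H^{\sym{2}}\subset H$. Combined with the vanishing from Step~2, this yields $H_1 = H/H^{\sym{2}}$. The main obstacle is the sign and symmetry bookkeeping for the self-loop at the basepoint---in particular, making the identification $C_1 \cong H/H^{\sym 2}$ precise and confirming that $\im\partial_2$ is exactly $H^{\sym{2}}$ requires carefully combining bracket antisymmetry, edge orientation conventions, and the antipode action coming from the cyclic-operad closure.
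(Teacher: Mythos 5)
Your overall strategy (realize the complex as a bar-type complex, kill the higher homology with a contracting homotopy, then compute $H_1$ directly) is the same as the paper's, but there are genuine gaps in the execution, beginning with the identification of the chain groups. A rank-one based graph with $k$ vertices has one trivalent basepoint vertex decorated by an element of $H\Lie\arity{3}\cong \Lie\arity{3}\otimes H^{\otimes 2}\cong H^{\otimes 2}$ and $k-1$ bivalent vertices each decorated by $H\Lie\arity{2}\cong H$, so $C_k\cong\bigl[(H\otimes H)\otimes H^{\otimes(k-1)}\bigr]_{\sym{2}}$; the two $H$-factors at the basepoint cannot be discarded for $k\geq 2$, since there is no relation in $\cGB_{H\Lie}$ that moves decorations between distinct graph vertices (that is exactly what the later quotient $\overline{\cGB_{H\Lie}}$ is introduced for). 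In particular $C_1=(H\otimes H)_{\sym{2}}$ is not a quotient of $H$; it only becomes one after dividing by $\im\partial_2$, using the relation $h_1\otimes h_2h\sim hh_1\otimes h_2$ supplied by the full $C_2=[H\otimes(H\otimes H)]_{\sym{2}}$. Consequently your computation of $H_1$ has the mechanism backwards: $\im\partial_2$ is what collapses $(H\otimes H)_{\sym{2}}$ onto $H$ (via $h_1\otimes h_2\mapsto h_2h_1$), and the further quotient by $H^{\sym{2}}$ then comes from the graph-automorphism relation $h_1\otimes h_2=-S(h_2)\otimes S(h_1)$, which on a class $h\otimes 1$ gives $h=-S(h)$. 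It is not the case that $\im\partial_2$ is (the preimage of) $H^{\sym{2}}$.

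The contracting homotopy is also broken as written. With the correct chain groups, $C_{\bullet+1}$ is the two-sided bar construction $H\otimes H^{\otimes\bullet}\otimes H$, the two end factors sitting on the basepoint tripod. Your $s$ inserts a bivalent vertex labeled $1_H$ next to the basepoint; then, as you yourself observe, the two contractions adjacent to that new vertex each return the original chain and cancel against each other --- which leaves nothing to produce the identity, so your computation actually yields $\partial s+s\partial=0$ rather than $\mathrm{id}$. The standard extra degeneracy of the bar construction is a different operation: it splits one of the two $H$-decorations off the basepoint tripod into a new bivalent vertex and replaces that slot by $1_H$, after which exactly one adjacent contraction returns the original chain. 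This operation is unavailable in your setup precisely because you dropped the $H^{\otimes 2}$ at the basepoint. Finally, whichever homotopy you use is not $\sym{2}$-equivariant, so you must either average it over loop reversal or invoke exactness of $\sym{2}$-coinvariants in characteristic $0$; the paper does the latter, computing the homology of the bar complex before imposing the graph automorphism.
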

\begin{proof}
As seen in the above picture, a graph in  $\cGB_{H\Lie,\cdot}^{(1)}$ consists of a sequence of elements of $H\Lie$ arranged in a circle. Away from the basepoint, these must have arity $1$, and so they must lie in $H\cong H\Lie\arity{2}$. The element of $H\Lie$ at the basepoint must lie in $H\Lie\arity{3}$, we can push the elements of $H$ away from the basepoint so that as we travel around the circle we see an element of $H$ followed by the basepoint hair followed by another element of $H$.  Ignoring the $\sym{2}$ graph automorphism, the graphical chain complex is the same as the shifted bar complex for the algebra $H$ with coefficients in itself~\cite[1.1.11]{Loday}. The bar complex is exact except in the bottom degree, so we have $H_k(\cGB_{H\Lie,\cdot}^{(1)})=0$ if $k\geq 2$.

To calculate the first homology, note that the $1$ chains are $(H\otimes H)_{\sym{2}}$, where $h_1\otimes h_2=-S(h_2)\otimes S(h_1)$. The $2$ chains are of the form $[H\otimes(H\otimes H)] _{\sym{2}}$, where the differential is defined by $d(h\otimes(h_1\otimes h_2))=h_1\otimes(h_2h)-(hh_1)\otimes h_2$. Thus $h_1\otimes h_2=h_2h_1\otimes 1$ modulo boundaries. Thus $H_1(\cGB_{H\Lie,\cdot}^{(1)})$ is a quotient of $H$. The relation $h\otimes 1=-S(1)\otimes S(h)$ implies that we are also quotienting by $h=-S(h)$.
 Thus we get the antisymmetric part $H/H^{\sym{2}}$.
\end{proof}

If there is no basepoint, then in addition to the $\sym{2}$-symmetry of the graph, there is a full dihedral group of symmetries. This leads to (skew) \emph{dihedral homology} of an algebra $H$. 

\begin{theorem}
$H_k(\cG_{H\Lie,\cdot}^{(1)}) = HD'_{k-1}(H)$.
In particular
$$H_1(\cG_{H\Lie,\cdot}^{(1)}) =HD'_0(H) \cong(\id - S)(H/[H,H]) .$$
\end{theorem}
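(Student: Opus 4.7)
My plan is to identify the chain complex $\cG_{H\Lie,\cdot}^{(1)}$ with the skew dihedral chain complex of the associative algebra $H$, in parallel with the preceding identification of $\cGB_{H\Lie,\cdot}^{(1)}$ with the shifted bar complex. A rank-$1$ connected graph all of whose vertices are bivalent is just a cycle, and since $H\Lie\arity{2} \cong H$, such a $k$-vertex cycle is specified by an element of $H^{\otimes k}$. Its automorphism group is the dihedral group $D_k$: the cyclic $\Z/k$ rotates the labels, while a reflection reverses the orientation of the cycle and simultaneously swaps the i/o slots of each bivalent vertex, the latter introducing the antipode $S$ on each tensor factor by the cyclic operad structure on $H$. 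Combining this with the graph orientation signs promotes $D_k$-coinvariants to the skew dihedral coinvariants, so $\cG_{H\Lie,k}^{(1)}$ is exactly the $k$-th skew dihedral chain group of $H$.

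Next I would check that the graph-homology boundary—contracting an edge and multiplying the two adjacent $H$-labels—coincides, up to orientation signs, with the standard dihedral Hochschild boundary; this is the unbased counterpart of the bar-complex identification used in the previous theorem. Since the standard complex computes $HD'_\bullet(H)$, and graph degree counts vertices (one more than the standard chain degree, matching the same shift seen in the based case), we conclude $H_k(\cG_{H\Lie,\cdot}^{(1)}) = HD'_{k-1}(H)$.

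For the explicit description of $H_1$, I would compute directly. The $1$-chains come from a one-vertex self-loop labeled $h \in H$; the nontrivial automorphism (flipping the edge) sends $h$ to $-S(h)$, so $C_1 = H/(\id + S)H$. The $2$-chains are spanned by two-vertex cycles $h_1 \otimes h_2$, and contracting each of the two edges produces $\partial(h_1 \otimes h_2) = h_1 h_2 - h_2 h_1$; hence the image of $\partial$ in $C_1$ equals $[H,H]$. Therefore
$$
H_1(\cG_{H\Lie,\cdot}^{(1)}) = H / \bigl([H,H] + (\id + S)H\bigr).
$$
Since $S^2 = \id$ and we work in characteristic $0$, the involution $S$ diagonalizes $H/[H,H]$ into $\pm 1$-eigenspaces; quotienting by $(\id + S)H$ kills the $+1$-eigenspace and leaves the $-1$-eigenspace, which is precisely the image $(\id - S)(H/[H,H])$.

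The main obstacle I anticipate is careful sign bookkeeping: ensuring that the combined orientation-plus-antipode action matches Loday's ``skew'' convention $HD'$ rather than plain dihedral $HD$. Once the orientation convention on $\cG_{H\Lie}$ is made explicit and combined with the cyclic operad action of $S$ on bivalent vertices, the comparison is routine but not entirely transparent, and the direct $H_1$ computation above serves as a consistency check confirming that the signs line up correctly.
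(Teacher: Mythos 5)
Your proposal is correct and follows essentially the same route as the paper: identify $\cG_{H\Lie,k}^{(1)}$ with the $k$-th skew dihedral chain group $(H^{\otimes k})_{D_{2k}}$ (with the reflection acting via the antipode and the orientation signs producing the ``skew'' convention), match the edge-contraction boundary with the dihedral Hochschild boundary, and read off the degree shift. Your explicit verification of the $H_1$ formula via $H/([H,H]+(\id+S)H)\cong(\id-S)(H/[H,H])$ is a correct elaboration of the ``in particular'' clause that the paper leaves to the cited reference.
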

\begin{proof}
This is similar to Proposition 6.2 in~\cite{CKV2}. The chain group $\cG_{H\Lie,k}^{(1)}$ is isomorphic to $ (H^{\otimes k})_{D_{2k}}$, and the boundary map is given by summing over pairwise multiplication of adjacent elements of $H$. This gives a version of Loday's dihedral homology, the specific signs involved giving the ``skew'' version.
\end{proof}

\begin{remark}
For the specific case of the tensor algebra $H=T(V)$, a theorem of Loday \cite[Thm. 3.1.6]{Loday} implies that $HD_n'(T(V))=HD_n'(\F)$ for $n\geq 1$.\footnote{Loday's statement in Theorem 3.1.6 that $HC_0(T(V))=S(V)$ is incorrect. It should be $T(V)/[T(V),T(V)]=\oplus V^{\otimes m}_{\mathbb Z_m}$, which follows from the main formula of that theorem.}
That is, the homology is carried by the subcomplex where all vertices are labeled by the unit element of $H$. In particular, the reduced dihedral homology vanishes for $n\geq 1$.
\end{remark}

\subsection{Graph homology of $\overline{\cGB_{H\Lie}^{(n)}}$ and $\overline{\cG_{H\Lie}^{(n)}}$}

\begin{theorem}
\label{thm:HReducedBased}
For $n\geq 2$ we have $H_k(\overline{\cGB_{H\Lie}^{(n)}}) = H^{2n-1-k}(\Aut(F_n);H^{\otimes n})$, where $\Aut(F_n)$ acts on $H^{\otimes n}$ via the representation $\rho_H$.
\end{theorem}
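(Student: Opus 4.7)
The plan is to reduce to the based analogue of the graph-complex to group cohomology dictionary developed in \cite{CV, CKV2} for the $\Lie$ operad, now with $H\Lie$-decorations. The key new ingredient is that the sliding relation in $\overline{\cGB}$ is exactly what is needed to turn the extra $H$-decorations into coefficients in the $\Aut(F_n)$-module $(H^{\otimes n}, \rho_H)$ constructed in Section~\ref{sec:action_on_Hn}.

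First I would put chains into a normal form. Given a representative in $\overline{\cGB^{(n)}_{H\Lie}}$, pick a spanning tree $T$ of the underlying graph rooted at the basepoint $b$. The sliding relation lets every bivalent $H$-labeled vertex be moved freely along edges, at the cost of applying the antipode when an edge is traversed against its orientation and of distributing via $\Delta$ when passing through a trivalent $\Lie$-vertex. Pushing all $H$-vertices off $T$ and consolidating them, we arrive at a representative in which each of the $n$ non-tree edges carries a single $H$-label adjacent to $b$. A choice of $T$ fixes an identification $\pi_1(\Gamma,b)\cong F_n$, so the collected $H$-labels record an element of $H^{\otimes n}$. Changing $T$ or reorienting a non-tree edge modifies this element by compositions of $m$, $\Delta$ and $S$, and one checks against the explicit formulas in the remark following Definition~\ref{def:homh} that these modifications are precisely the generators of $\rho_H$.

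Second, this normal form identifies $\overline{\cGB^{(n)}_{H\Lie}}$ with a twisted version of the based $\Lie$-graph complex $\cGB^{(n)}_{\Lie}$ with coefficients in the $\Aut(F_n)$-module $H^{\otimes n}$. By the Kontsevich--Conant--Vogtmann correspondence, $\cGB^{(n)}_{\Lie}$ is the equivariant cellular cochain complex (reindexed as $2n-1-*$) of the spine of Auter space, a contractible $\Aut(F_n)$-CW complex of the appropriate dimension with finite cell stabilizers. In characteristic zero, twisting by the coefficient module $H^{\otimes n}$ and taking equivariant cohomology then yields
\begin{equation*}
H_k\bigl(\overline{\cGB^{(n)}_{H\Lie}}\bigr) \;\cong\; H^{2n-1-k}\bigl(\Aut(F_n);\, H^{\otimes n}\bigr).
\end{equation*}

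The main obstacle lies in the first step: a careful verification that sliding literally matches $\rho_H$. It is enough to check this on a generating set (signed permutations together with the Nielsen transformation $\eta$) and on the defining relations from \cite{AFV}. Permutations and inversions are immediate from the graph-orientation conventions and the antipode axiom, while the $\eta$-case reproduces precisely the map $(m\otimes\id)\circ(\tau\otimes\id)\circ(\id\otimes\Delta)\circ(\id\otimes S)$ displayed in Figure~\ref{action}. Once this local matching is confirmed, the remainder is a direct translation of the Auter space / based graph correspondence to the $H\Lie$-decorated setting, with the sliding relation absorbing the difference between the ambient and the twisted complexes.
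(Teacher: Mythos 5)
Your proposal is correct and follows essentially the same route as the paper: choose a maximal tree, slide the $H$-labels onto the $n$ non-tree edges to produce an element of $H^{\otimes n}$, identify the quotient complex with the based marked Lie graph complex (built from Auter space) tensored over $\Aut(F_n)$ with $H^{\otimes n}$, and verify that changing the tree alters the labels exactly by the action $\rho_H$ of the corresponding marking change. The paper carries out your "main obstacle" step by checking the single elementary move that swaps one tree edge for an adjacent non-tree edge, which is the same verification you propose via Nielsen generators.
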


\begin{proof}
The based marked Lie graph complex is defined just like the based Lie graph complex, except that the graphs have markings, which are homotopy equivalences $\rho\colon G\to R_n$. Here $R_n$ is the wedge of $n$ circles. Let $\mathrm{BMLG}_{n,k}$ be the part of the marked Lie graph complex consisting of marked Lie graphs, with $k$ elements of the operad  $\Lie$ decorating the graph. Note that $\Aut(F_n)$ acts on this chain complex by changing the marking.

Let $\overline{B}\Aut(F_n)$ denote the moduli space of basepointed graphs, which is a rational classifying space for $\Aut(F_n)$. This is a quotient by $\Aut(F_n)$ of the space of marked basepointed graphs $\overline{E}\Aut(F_n)$ (auter space), which is contractible.

This chain complex $\mathrm{BMLG}_{n,\cdot}$ is constructed from $\overline{E}\Aut(F_n)$, giving a model for group cohomology with
$$C^{2n-1-k}(\overline{E}\Aut(F_n))\cong\mathrm{BMLG}_{n,k}.$$
So by definition we have $H_k([\mathrm{BMLG}_{n,\cdot}]_{\Aut(F_n)})=H^{2n-1-k}(\Aut(F_n);\Q)$.
Taking coefficients in the module $H^{\otimes n}$, we have
$$
H_k(\mathrm{BMLG}_{n,\cdot}\otimes_{\Aut(F_n)}H^{\otimes n})=H^{2n-1-k}(\Aut(F_n);H^{\otimes n}).
$$

Let $\alpha$ denote the map from $\overline{\cGB_{H\Lie}^{(n)}}$ to
$\mathrm{BMLG}_{n,\bullet}\otimes_{\Aut(F_n)} H^{\otimes n}$ constructed as follows:
for $\mathbf{G} \in\cG_{H\Lie}^{(n)}$ choose a maximal tree $\mathbf{T}$ of the underlying graph,
using the sliding relations and the commuting relations of elements in $H$ with Lie operad elements, we can push the elements of $H$ away from the edges in $\mathbf{T}$ and the half edge corresponding to the base point.
This allow us the represent the element $\mathbf{G}$ as an element in the $\cGB_\Lie^{(n)}$ decorated with elements in $H$ on the $n$ edges  which are outside the maximal tree $\mathbf{T}$. This element together with the ordering and orienting the edges outside $\mathbf{T}$ is an element in $\mathrm{BMLG}_{n}$, where the marking is given by collapsing the tree $\mathbf{T}$ to the base point of the rose $R_n$ and sending  additional edges edges to the loops in the rose $R_n$. I.e., label the edges not in the maximal tree by the generators of $F_n$ in some way. Next, we obtain an element in $\mathrm{BMLG}_{n}\otimes_{\Aut(F_n)} H^{\otimes n}$ by attaching an element in $H^{\otimes n}$ which is obtained by tensoring the elements from $H$ at the additional edges. This is illustrated in the following figure. On the left, the tree $\mathbf T$ is depicted in bolder lines. Elements of $H$ have been pushed onto the two edges not in the tree. This maps to a marked base pointed Lie graph with coefficients in $H^{\otimes 2}$ as on the right.
$$
\begin{minipage}{6cm}
\begin{tikzpicture}
\node[operad](A){$o_1$};
\node[empty](B)[above of =A]{};
\node[empty](C)[below of =A, node distance=1.4cm]{};
\node[empty](D)[right of =B]{};
\node[operad](K)[right of =D]{$o_2$};
\node[empty](J)[above of =K]{$\flat$};
\node[hopf](L)[right of =K, node distance=1.5cm]{$h_1$};
\node[empty](E)[right of = C]{};
\node[empty](F)[right of =E]{};
\node[hopf](G)[right of=F, node distance=1.5cm]{$h_2$};
\node[empty](H)[right of=L, node distance=1.5cm]{};
\node[operad](I)[below of=H]{$o_3$};
\begin{scope}[decoration={markings,mark = at position 0.5 with {\arrow{stealth}}}]
\draw[very thick, postaction=decorate]  (A) to[out=90, in=180](K);
\draw[very thick, postaction=decorate] (A) to (I);
\draw[very thick] (A) to[out=270, in=180] (G);
\draw[postaction=decorate]  (G) to[out=0,in=270](I);
\draw[very thick] (K) to (L);
\draw[postaction=decorate] (L) to[out=0, in=90]  (I);
\end{scope}
\draw[very thick] (K) to (J);
\end{tikzpicture}
\end{minipage}
\mapsto\,\,
\begin{minipage}{6cm}
\begin{tikzpicture}
\node[operad](A){$o_1$};
\node[empty](B)[above of =A]{};
\node[empty](C)[below of =A, node distance=1.4cm]{};
\node[empty](D)[right of =B]{};
\node[operad](K)[right of =D]{$o_2$};
\node[empty](J)[above of =K]{$\flat$};
\node[empty](L)[right of =K, node distance=1.5cm]{};
\node[empty](E)[right of = C]{};
\node[empty](F)[right of =E]{};
\node[empty](G)[right of=F, node distance=1.5cm]{};
\node[empty](H)[right of=L, node distance=1.5cm]{};
\node[operad](I)[below of=H]{$o_3$};
\node[empty](aa)[above of =G, node distance=.25cm]{$x_2$};
\node[empty](bb)[above of =L, node distance=.25cm]{$x_1$};
\begin{scope}[decoration={markings,mark = at position 0.5 with {\arrow{stealth}}}]
\draw[very thick, postaction=decorate] (A) to[out=90, in=180] (K);
\draw[very thick, postaction=decorate] (A) to (I);
\draw (A) to[out=270, in=180] (G.center);
\draw[postaction=decorate]  (G.center) to[out=0,in=270] (I);
\draw (K) to (L.center);
\draw[postaction=decorate]  (L.center) to[out=0, in=90] (I);
\end{scope}
\draw[very thick] (K) to (J);
\end{tikzpicture}
\end{minipage}
\otimes (h_1\otimes h_2)
$$
It is easy to see that the action of $\Aut(F_n)$ on $H^{\otimes n}$ is such that the resulting element does not depend on the choice of the ordering and orienting the external edges.

The main property one needs to verify is that  $\alpha(\mathbf{G})$ does not depend on the choice of the maximal tree $\mathbf{T}$. In fact this was our main motivation in the definition of the action of $\Aut(F_n)$ on $H^{\otimes n}$. First we consider the following elementary move that changes the maximal tree by one edge. Here the black line represents part of the tree $\mathbf{T}$ and the dashed edges are exterior to it.
$$
\begin{minipage}{3cm}
\begin{tikzpicture}
\node[empty](A){};
\node[operad](B)[right of=A, node distance=1.5cm]{$o$};
\node[empty](C)[right of =B, node distance=1.5cm]{};
\node[empty] (D)[above of=C]{};
\node[empty](E)[below of = C]{};
\node[empty](A1)[above of=A]{};
\node[empty](A2)[below of =A]{};
\begin{scope}[decoration={markings,mark = at position 0.5 with {\arrow{stealth}}}]
\draw[very thick] (A) to[postaction=decorate] (B);
\draw[densely dashed, postaction=decorate] (B) to(C);
\draw[densely dashed,postaction=decorate ] (B) to[out =45, in =180](D);
\draw[densely dashed,postaction=decorate ] (B) to[out=-45, in=180](E);
\end{scope}
\end{tikzpicture}
\end{minipage}
\mapsto
\begin{minipage}{3cm}
\begin{tikzpicture}
\node[empty](A){};
\node[operad](B)[right of=A]{$o$};
\node[empty](C)[right of =B, node distance=1.5cm]{};
\node[empty] (D)[above of=C]{};
\node[empty](E)[below of = C]{};
\begin{scope}[decoration={markings,mark = at position 0.5 with {\arrow{stealth}}}]
\draw[dashed, postaction=decorate] (A) to (B);
\draw[dashed, postaction=decorate] (B) to(C);
\draw[very thick, postaction=decorate]  (B) to[out =45, in =180](D);
\draw[dashed, postaction=decorate] (B) to[out=-45, in=180](E);
\end{scope}
\end{tikzpicture}
\end{minipage}
$$
This has the following effect on markings:
$$
\begin{minipage}{3cm}
\begin{tikzpicture}
\node[empty](A){};
\node[operad](B)[right of=A]{$o$};
\node[empty](C)[right of =B, node distance=1.5cm]{};
\node[empty] (D)[above of=C]{};
\node[empty](E)[below of = C]{};
\begin{scope}[decoration={markings,mark = at position 0.5 with {\arrow{stealth}}}]
\draw[very thick, postaction=decorate]  (A) to(B);
\draw[densely dashed, postaction=decorate] (B) to(C);
\draw[densely dashed, postaction=decorate] (B) to[out =45, in =180](D);
\draw[densely dashed, postaction=decorate] (B) to[out=-45, in=180](E);
\end{scope}
\node[empty](dd) [above of=D, node distance=.25cm]{$x_1$};
\node[empty](cc) [above of=C, node distance=.25cm]{$x_2$};
\node[empty](ee) [above of=E, node distance=.25cm]{$x_3$};
\end{tikzpicture}
\end{minipage}
\mapsto
\begin{minipage}{3cm}
\begin{tikzpicture}
\node[empty](A){};
\node[operad](B)[right of=A]{$o$};
\node[empty](C)[right of =B, node distance=1.5cm]{};
\node[empty] (D)[above of=C]{};
\node[empty](E)[below of = C]{};
\begin{scope}[decoration={markings,mark = at position 0.5 with {\arrow{stealth}}}]
\draw[densely dashed, postaction=decorate] (A) to (B);
\draw[densely dashed, postaction=decorate] (B) to(C);
\draw[very thick, postaction=decorate] (B) to[out =45, in =180](D);
\draw[densely dashed, postaction=decorate] (B) to[out=-45, in=180](E);
\end{scope}
\node[empty](dd) [above of=D, node distance=.25cm]{};
\node[empty](cc) [above of=C, node distance=.3cm]{$x_1^{-1}x_2$};
\node[empty](ee) [above of=E, node distance=.3cm]{$x_1^{-1}x_3$};
\node[empty](aa) [above of =A, node distance=.25cm]{\,\,$x_1$};
\end{tikzpicture}
\end{minipage}
$$
It has the following effect on the elements of $H$ labeling the edges, where $\Delta^3(h_1)=h_1'\otimes h_1''\otimes h_1'''$ in Sweedler notation:
$$
\begin{minipage}{4cm}
\begin{tikzpicture}
\node[empty](A){};
\node[operad](B)[right of=A]{$o$};
\node[hopf](C)[right of =B, node distance=1.5cm]{$h_2$};
\node[hopf] (D)[above of=C]{$h_1$};
\node[hopf](E)[below of = C]{$h_3$};
\node[empty] (G)[right of =C]{};
\node[empty] (H)[right of =D]{};
\node[empty] (I)[right of =E]{};
\begin{scope}[decoration={markings,mark = at position 0.5 with {\arrow{stealth}}}]
\draw[very thick, postaction=decorate] (A) to (B);
\draw[densely dashed, postaction=decorate] (B) to(C);
\draw[densely dashed, postaction=decorate] (B) to[out =45, in =180](D);
\draw[densely dashed, postaction=decorate] (B) to[out=-45, in=180](E);
\end{scope}
\draw[densely dashed]  (C) to(G);
\draw[densely dashed]  (D) to (H);
\draw[densely dashed]  (E) to (I);
\end{tikzpicture}
\end{minipage}
\mapsto\,\,
\begin{minipage}{6cm}
\begin{tikzpicture}
\node[hopf](A){$h_1'$};
\node[operad](B)[right of=A, node distance=1.5cm]{$o$};
\node[hopf](C)[right of =B, node distance=2cm]{$S(h_1'')h_2$};
\node[empty] (D)[above of=C]{};
\node[hopf](E)[below of = C]{$S(h_1''')h_3$};
\node[empty] (aa)[left of =A]{};
\node[empty](cc) [right of =C, node distance =2cm]{};
\node[empty](dd) [right of =D, node distance =2cm]{};
\node[empty](ee) [right of =E, node distance =2cm]{};
\begin{scope}[decoration={markings,mark = at position 0.5 with {\arrow{stealth}}}]
\draw[densely dashed] (A) to (B);
\draw[densely dashed, postaction=decorate] (B) to(C);
\draw[thick, postaction=decorate]  (B) to[out =45, in =180](D);
\draw[densely dashed, postaction=decorate] (B) to[out=-45, in=180](E);
\draw[densely dashed] (aa) to[postaction=decorate] (A);
\draw[densely dashed] (C) to (cc);
\draw[very thick] (D) to (dd);
\draw[densely dashed] (E) to (ee);
\end{scope}
\end{tikzpicture}
\end{minipage}
$$
The automorphism $\varphi: x_1\mapsto x_1, x_2\mapsto x_1^{-1}x_2, x_3\mapsto x_1^{-1}x_3$ takes $h_1\otimes h_2\otimes h_3$ to $h_1'\otimes S(h_1'')h_2\otimes S(h_1''')h_3$, so both maximal trees give the same element of $\mathrm{BMLG}_{n}\otimes_{\Aut(F_n)} H^{\otimes n}$.

Consider a generalization of the above move where many edges of $\mathbf{T}$ meet the vertex in question. The effect of changing an edge of $\mathbf{T}$ to a different edge emanating from the vertex will have a similar effect on the marking, except that some other edges elsewhere in the graph will have an $x_1$ multiplied on their right or an $x_1^{-1}$ multiplied on their left. This change will be mirrored in the change in the labeling by elements of $H$. It is not difficult so see that this generalized move is sufficient to move between any two maximal trees.

From the construction it is clear that  $\alpha$ is both surjective and injective. It is also a chain map -- collapsing an edge from $T$ clearly
commutes with $\alpha$. Therefore $\alpha$ induces an isomorphism on the homology.
\end{proof}
\begin{remark}
 We actually discovered the action of $\Aut(F_n)$ on $H^{\otimes n}$
by trying to express $H_*(\cGB_{H\Lie}^{(n)})$ as the top cohomology of $\Aut(F_n)$ with coefficients in some module.
\end{remark}

\begin{theorem}
\label{thm:HReduced}
For $n\geq 2$ we have $H_k(\overline{\cG_{H\Lie}^{(n)}}) = H^{2n-2-k}(\Out(F_n);\overline{H^{\otimes n}})$.
\end{theorem}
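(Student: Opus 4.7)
The plan is to mirror the proof of Theorem~\ref{thm:HReducedBased}, replacing auter space by outer space, $\Aut(F_n)$ by $\Out(F_n)$, and $H^{\otimes n}$ by $\overline{H^{\otimes n}}$; the shift in the exponent from $2n-1$ down to $2n-2$ reflects the drop of one in virtual cohomological dimension upon forgetting the basepoint. First I would introduce the (unbased) marked Lie graph complex $\mathrm{MLG}_{n,\bullet}$, modelled on the cellular structure of outer space $\overline{E}\Out(F_n)$. Since outer space is contractible with $\Out(F_n)$-quotient a rational classifying space, one has $C^{2n-2-k}(\overline{E}\Out(F_n)) \cong \mathrm{MLG}_{n,k}$, hence
$$H_k\bigl(\mathrm{MLG}_{n,\bullet}\otimes_{\Out(F_n)} \overline{H^{\otimes n}}\bigr) = H^{2n-2-k}\bigl(\Out(F_n); \overline{H^{\otimes n}}\bigr).$$
It thus suffices to build a chain isomorphism $\beta\colon \overline{\cG_{H\Lie}^{(n)}} \to \mathrm{MLG}_{n,\bullet}\otimes_{\Out(F_n)} \overline{H^{\otimes n}}$.

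The map $\beta$ follows the recipe of the map $\alpha$ from the based case, with the adjustment that, in addition to a maximal tree $\mathbf T$ of the underlying graph, one must choose an auxiliary vertex $v$ to serve as a surrogate basepoint. Using the $H\Lie$ commutation rules together with the sliding relation defining $\overline{\cG_{H\Lie}^{(n)}}$, one pushes all $H$-labels off the edges of $\mathbf T$ onto the $n$ non-tree edges, producing an element of $H^{\otimes n}$ once these edges are oriented and ordered; collapsing $\mathbf T$ to $v$ then specifies a marking $G\to R_n$, and forgetting the basepoint yields an element of $\mathrm{MLG}_{n,\bullet}\otimes \overline{H^{\otimes n}}$. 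Tree independence is handled exactly as in the proof of Theorem~\ref{thm:HReducedBased}: each elementary tree change is matched by the $\rho_H$-action of a generator of $\Aut(F_n)$ on $H^{\otimes n}$.

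The genuinely new content is independence of the auxiliary vertex $v$. Moving $v$ across a tree edge changes the marking by an element of $\mathrm{Inn}(F_n)$, while on the coefficient side it forces a Hopf label $h$ to be slid across $v$, which (by the comultiplication axiom for $H\Lie$ and the multiplication of labels along the new edges) acts on the resulting element of $H^{\otimes n}$ precisely by the conjugation operation $h_\circledast$. Since $\overline{H^{\otimes n}}$ is by definition the quotient of $H^{\otimes n}$ killing these $\circledast$-differences, and since $\mathrm{Inn}(F_n)$ acts trivially there, the two variations cancel and $\beta$ is well-defined. The main obstacle is bookkeeping this basepoint-move identity cleanly enough to see that the $\circledast$-action on the coefficient side matches the $\mathrm{Inn}(F_n)$-action on markings --- the sliding relation used to define $\overline{\cG_{H\Lie}^{(n)}}$ is precisely the bridge that makes this work. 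Once this is in hand, $\beta$ is visibly a chain map (edge contractions commute with the tree construction) and is a bijection by the same inverse recipe as in the based case, so it induces the claimed isomorphism on homology.
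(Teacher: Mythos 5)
Your proposal is correct and follows essentially the same route as the paper: the paper's proof is a short sketch citing Theorem~\ref{thm:HReducedBased} and noting exactly the two changes you identify, namely passing to the unbased marked Lie graph complex over outer space, and observing that the ambiguity in pushing the $H$-labels off the maximal tree (your surrogate basepoint $v$) is a global conjugation, which is precisely what is killed in $\overline{H^{\otimes n}}$ and matches the triviality of the $\mathrm{Inn}(F_n)$-action there. Your write-up merely makes the paper's point (ii) more explicit; there is no substantive difference in approach.
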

\begin{proof}
The proof is almost the same as of Theorem~\ref{thm:HReducedBased}. The only differences are: (i) one uses the \emph{marked Lie graph complex} instead of \emph{based marked Lie graph complex}; (ii) after one pushes the $H$ elements outside the maximal tree $\mathbf{T}$ the $H$-labels on the remaining $n$ edges are not uniquely determined, because there is no unique direction to push away from. This ambiguity corresponds to the fact that one can change the group elements marking each edge by a global conjugation, and by definition we mod out by such conjugations in $\overline{H^{\otimes n}}$.
\end{proof}

\subsection{Reduction from $\cGB_{H\Lie}^{(n)}$ to $\overline{\cGB_{H\Lie}^{(n)}}$ }

\begin{theorem}
The induced map $H_\bullet(\cGB_{H\Lie}^{(n)}) \to H_\bullet(\overline{\cGB_{H\Lie}^{(n)}})$ is an isomorphism.
\end{theorem}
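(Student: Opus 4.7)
The plan is to show that the kernel $K^{(n)}$ of the surjection $\cGB_{H\Lie}^{(n)} \twoheadrightarrow \overline{\cGB_{H\Lie}^{(n)}}$ is acyclic. The long exact sequence associated to the short exact sequence $0 \to K^{(n)} \to \cGB_{H\Lie}^{(n)} \to \overline{\cGB_{H\Lie}^{(n)}} \to 0$ will then yield the desired isomorphism on homology. The kernel $K^{(n)}$ is spanned by elementary sliding differences, i.e., pairs of graphs that are identical except that a single $H$-labeled bivalent vertex has been moved from one end of one edge to the other end (with the appropriate reversal of edge orientation).

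My approach is to exhibit a common deformation retract of both $\cGB_{H\Lie}^{(n)}$ and $\overline{\cGB_{H\Lie}^{(n)}}$ onto a normal-form subcomplex $N^{(n)}$, where every $H$-label is placed in a canonical position on its edge relative to a chosen combinatorial structure on the underlying graph. Concretely, fix a canonical orientation of each edge (by a lexicographic rule on some labelling of edges) and a canonical spanning tree, and insist in $N^{(n)}$ that every $H$-label sits at the ``head'' end of each edge; the multiplication and unit relations of $H\Lie$ consolidate multiple $H$-labels on a given edge into a single one. Since the retraction uses only the multiplication, comultiplication, and unit relations from Section~\ref{sec:Hopf_operad}, all of which are already valid in $\cGB_{H\Lie}^{(n)}$, it is well-defined upstairs and automatically descends to $\overline{\cGB_{H\Lie}^{(n)}}$. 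On the subcomplex $N^{(n)}$ the sliding relation acts trivially, so the quotient restricts to the identity on $N^{(n)}$, and the two-out-of-three property for quasi-isomorphisms applied to the commutative triangle $N^{(n)} \hookrightarrow \cGB_{H\Lie}^{(n)} \twoheadrightarrow \overline{\cGB_{H\Lie}^{(n)}}$ will then complete the argument.

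The main obstacle is verifying that the retraction is a chain map with respect to the edge-contraction differential. When one contracts an edge carrying an $H$-label, the two adjacent Lie vertices merge, and the $H$-label must be correctly redistributed across the legs of the new vertex via the comultiplication relation $h \mapsto h_{(1)} \otimes h_{(2)}$. Keeping these redistributions consistent with the canonical-position rule requires careful application of the coassociativity and cocommutativity of $H$, in the style of Proposition~\ref{prop:Lie-iso}. A secondary technical point is that the choice of canonical edge orientation need not be $\Aut(F_n)$-equivariant; since the retraction is performed at the level of chain complexes (before taking any coinvariants), this can be handled either by averaging over combinatorial choices or, more efficiently, by filtering $K^{(n)}$ by the number of $H$-labels that sit away from their canonical end and running a spectral sequence argument, where the $E^1$-page is manifestly contractible because on a single edge the sliding relation is an identification of two positions along a contractible interval.
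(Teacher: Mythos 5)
There is a genuine gap, and it sits at the heart of your construction. You propose a ``deformation retract of both $\cGB_{H\Lie}^{(n)}$ and $\overline{\cGB_{H\Lie}^{(n)}}$ onto a normal-form subcomplex $N^{(n)}$'' and justify its existence upstairs by saying it ``uses only the multiplication, comultiplication, and unit relations'' of $H\Lie$. But those are relations internal to the operad $H\Lie$, i.e.\ they describe what happens when you \emph{compose} decorations along an edge; in the graph complex, composing two adjacent $H$-labelled bivalent vertices into one is precisely the edge-contraction \emph{differential}, not a degree-preserving identification of generators. A graph with an $H$-label adjacent to the tail of an edge and the same graph with the label adjacent to the head are distinct basis elements of the same chain group of $\cGB_{H\Lie}^{(n)}$, and no relation in $\cGB_{H\Lie}^{(n)}$ identifies them --- that identification is exactly the content of passing to $\overline{\cGB_{H\Lie}^{(n)}}$. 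So the projection onto $N^{(n)}$ is not induced by relations upstairs; it would have to be realized by an explicit chain homotopy, which you never construct. The later remark that the $E^1$-page is ``manifestly contractible because on a single edge the sliding relation is an identification of two positions along a contractible interval'' is a topological intuition that does not engage with the actual chain-level object: the configurations along a single edge form a two-sided bar complex $\cdots \to M_1\otimes H\otimes H\otimes M_2 \to M_1\otimes H\otimes M_2\to M_1\otimes M_2$, and its acyclicity is a statement about $\Tor^H_{\bullet}(M_1,M_2)$, not about an interval.

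This is exactly where the paper's proof does the work you are skipping: it filters by the edges of the underlying reduced graph, observes that over each edge one gets the bar complex computing $\Tor^H_\bullet(M_1,M_2)$ where $M_1,M_2$ are the $H\Lie\arity{m}$'s at the endpoints viewed as $H$-modules via composition at the relevant i/o slot, and then invokes the key fact that these modules are \emph{free} over $H$, so the higher $\Tor$'s vanish and $\Tor_0 = M_1\otimes_H M_2$ implements the sliding identification. Your argument nowhere uses this freeness, and --- tellingly --- nowhere uses the basepoint. The paper explicitly notes immediately after the proof that the same argument fails for the unbased complex $\cG_{H\Lie}^{(n)}$ because for the last edge the two end modules are no longer free; an argument like yours that would apply verbatim to the unbased case proves too much. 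If you want to pursue a normal-form strategy, the honest version is to write down an explicit contracting homotopy for the bar complex over each edge (which exists precisely because of the freeness just mentioned) and check it is compatible with the remaining differentials; at that point you have essentially reproduced the paper's spectral-sequence argument in more explicit form.
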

\begin{proof}
This theorem essentially allows us to remove the bivalent vertices from the underlying graphs.
First we divide the edges in the graph into two types: red, where at least one end point is a bivalent vertex and blue, which connect two vertices of higher degree (the degree of the the based vertex counts also the base hair). This allow us to consider $\cGB_{H\Lie}^{(n)}$ as a double complex, whose homology can be computed using spectral sequence arguments. Our aim is to show that the spectral sequence collapses since the vertical complexes are exact except at degree $0$.

Each vertical complex breaks as a sum over graphs without bivalent vertices, and over each graph $G$ the complex consists of colored graphs obtained by adding chains of bivalent vertices at the edges of $G$. First we order the edges of $G$ so for each $k< |V|$ the first $k$ edges span a tree which contain the base vertex. This allow us to think of the complex over the graph $G$ as $|E|$-dimensional complex (multi graded by the number of vertices on each edge). Using induction we will shows that all resulting spectral sequences collapse and that the homology of the complex is only at degree $0$.

\begin{claim}
\label{cl:bar-resolution}
For each $l \leq |E|$ the homology of the $l$-dimensional subcomplex is only at degree $0$.
If $l < |V|$ in the homology one can push all $H$ elements away from the first $l$ edges of the graph.
For $|V| \leq l \leq |E|$ in the homology one can push all $H$ elements away from the first $|V|-1$ edges of the graph, but there needs to be an $H$ element on each of the remaining $l-|V| +1$ edges.
\end{claim}
\begin{proof}
Each complex correspond to a bar resolution of the algebra $H$ where at the ends one has operad elements in $H\Lie$ viewed as $H$- modules where $H$ acts by composing along the corresponding input. This gives that the homology of the complex computes the $\Tor^H_\bullet$-functors of $H$-modules at the end points.

The key observation is that these modules are free $H$ modules therefore all but the first $\Tor$ functor are trivial, because each $H\Lie\arity{n}$ is a free $H$ module when $H$ acts by composition on any i/o slot (this is true even if $n-1$ copies of $H$ act on $n-1$ i/o slots). The $\Tor_0$ is just the tensor product of these modules which is equivalent of allowing the slides of $H$ elements across the corresponding edge.

For the second part of the claim one uses the commutation relation between the element of $H$ on the elements of the operad $\Lie$ which allows one to push the $H$ elements away from the base vertex.
\end{proof}

The claim implies that the homology of the vertical complex above the graph $G$ collapses, which lead to a collapse of the spectral sequence.
\end{proof}

Unfortunately this argument does not work for the graph complex since it ``does not have enough edges'' --
the statement of Claim~\ref{cl:bar-resolution} is valid for $l< |E|$ but not for $l=|E|$ because we obtain a bar resolution where the two modules are not free. As a result the higher $\Tor$s might not vanish.

However in degree $1$ a similar result holds and the proof is even easier.
\begin{theorem}
The induced map $H_1(\cG_{H\Lie}^{(n)}) \to H_1(\overline{\cG_{H\Lie}^{(n)}})$ is an isomorphism.
\end{theorem}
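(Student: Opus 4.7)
The plan is to deduce the result from the long exact sequence associated with the short exact sequence of chain complexes
\[
0 \longrightarrow S \longrightarrow \cG_{H\Lie}^{(n)} \longrightarrow \overline{\cG_{H\Lie}^{(n)}} \longrightarrow 0,
\]
where $S$ is the kernel generated by the sliding relations. For the induced map on $H_1$ to be an isomorphism, it suffices to verify that $H_0(S) = H_1(S) = 0$, since then the long exact sequence gives the desired isomorphism directly.

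The vanishing of $H_0(S)$ is automatic: the complex $\cG_{H\Lie}^{(n)}$ is concentrated in chain degrees $\geq 1$ because every graph in the complex has at least one vertex. Consequently $S_0 = 0$ and $H_0(S) = 0$.

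For $H_1(S)$, I would observe that no nontrivial sliding relations exist in chain degree $1$. A $1$-chain in $\cG_{H\Lie}^{(n)}$ is a graph with exactly one vertex, hence a rose with $n$ self-loops; for $n \geq 2$ this single vertex has valence $2n \geq 4$ and thus cannot be bivalent. Since a sliding relation requires a bivalent Hopf-decorated vertex in order to push an $H$-element across an adjacent edge, there is nothing to slide on a one-vertex rose, so $S_1 = 0$ and $H_1(S) = 0$.

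With both vanishings in hand, the relevant segment of the long exact sequence,
\[
H_1(S) = 0 \longrightarrow H_1(\cG_{H\Lie}^{(n)}) \longrightarrow H_1(\overline{\cG_{H\Lie}^{(n)}}) \longrightarrow H_0(S) = 0,
\]
gives the result. The only non-routine step is the structural observation that $1$-vertex graphs of rank at least $2$ have no bivalent vertices --- a quick half-edge count --- which is precisely what obviates the elaborate spectral sequence and bar resolution argument used in the based case, and explains why the proof here is ``even easier.''
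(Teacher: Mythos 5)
Your reduction to the long exact sequence is a reasonable framework, and $S_0=0$ is indeed automatic since there are no $0$-vertex graphs. But the key claim that $S_1=0$ is false, and the proof collapses with it. The sliding relation does not require a bivalent $H$-labelled vertex of the underlying graph: a vertex $v$ of $\cG_{H\Lie}^{(n)}$ is decorated by $H\Lie\arity{|v|}$, which already carries elements of $H$ on its i/o slots, i.e.\ on the half-edges at $v$, and the sliding relation identifies the graph with $h$ on the half-edge $(v,e)$ with the graph carrying $h$ (twisted by the antipode as appropriate) on the opposite half-edge of $e$. For the one-vertex rose the two half-edges of a loop lie at the same $2n$-valent vertex but are distinct i/o slots, so sliding $h$ around a loop produces a genuinely different element of $\Lie\arity{2n}\otimes H^{\otimes \bullet}$. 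These identifications are nontrivial for general $H$ (they are of the same nature as the conjugation identifications that cut $H^{\otimes n}$ down to $\overline{H^{\otimes n}}$ in Theorem~\ref{thm:HReduced}), so $S_1\neq 0$ and your half-edge count does not apply.

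What is true --- and is the paper's one-line proof --- is that $S_1$ is contained in the image of the differential $\partial\colon \cG_{H\Lie,2}^{(n)}\to\cG_{H\Lie,1}^{(n)}$. A sliding difference (the graph with $h$ at one end of an edge $e$ minus the graph with $h$ at the other end) is exactly $\partial$ applied to the two-vertex chain obtained by inserting $h$ as a bivalent vertex in the interior of $e$: contracting one of the two resulting edges pushes $h$ to one endpoint, contracting the other pushes it to the other endpoint, and the boundary is their difference. Since $\cG_{H\Lie,0}^{(n)}=0$, one has $H_1(\cG_{H\Lie}^{(n)})=\cG_{H\Lie,1}^{(n)}/\partial\cG_{H\Lie,2}^{(n)}$, and the containment $S_1\subset\partial\cG_{H\Lie,2}^{(n)}$ gives injectivity of the induced map, surjectivity being clear for a quotient. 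Your long exact sequence argument can be repaired by replacing ``$S_1=0$'' with this containment, which shows that the map $H_1(S)\to H_1(\cG_{H\Lie}^{(n)})$ is zero; requiring $H_1(S)=0$ itself is more than is needed.
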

\begin{proof}
Observe that the kernel of the map $\cG_{H\Lie,1}^{(n)} \longrightarrow \overline{\cG_{H\Lie,1}^{(n)}}$ lies in the image of the differential.
\end{proof}

\section{Application to the Johnson cokernel}\label{sec:cokernel}
In this section we review the definition of the Johnson homomorphism and its cokernel, as well as the generalized trace map from~\cite{C}. Finally we draw the connection with $H_1(\mathcal G_{T(V)\Lie})$ and $H^{2r-3}(\Out(F_r); \overline{T(V)^{\otimes r}})$.

As before, let $\F$ be a field of characteristic $0$. Let $\Sigma_{g,1}$ be a surface of genus $g$ with one boundary component. It has free fundamental group generated by embedded curves $x_1,\ldots, x_g, y_1,\ldots y_g$ with $x_i,y_i$ intersecting in one point, and all other intersections trivial.
Let $V=H_1(\Sigma_{g,1};\F)$, which is a symplectic vector space with symplectic form $\la\cdot,\cdot\ra$, and let $p_1,\ldots,p_g,q_1,\ldots, q_g$ be the symplectic basis which is the image of the generating set of the fundamental group. We say $\la v,w\ra$ is the \emph{contraction} of $v$ and $w$.
For the groups $G\in\{\SP(V),\GL(V),\sym{s}\}$, let $[\lambda]_{G}$ be the irreducible representation of $G$ corresponding to $\lambda$.

We begin by defining the relevant Lie algebra which is the target of the Johnson homomorphism.

\begin{definition}
 Let $\mathsf{L}_k(V)$ be the degree $k$ part of the free Lie algebra on $V$. Define $\mathsf D_s(H)$ to be the kernel of the bracketing map $V\otimes \mathsf  L_{s+1}(V)\to \mathsf L_{s+2}(V)$. Let $\mathsf D(V)=\bigoplus_{s=0}^\infty \mathsf D_s(V)$ and $\mathsf D^+(V)=\bigoplus_{s\geq 1} \mathsf D_s(V)$. We refer to $s$ as the \emph{order} of an element of $\mathsf D(V)$.
\end{definition}
$V\otimes \mathsf L(V)$ is canonically isomorphic via the symplectic form to $V^*\otimes \mathsf L(V)$ which is isomorphic to the space of derivations $\mathsf {Der} (\mathsf L(V))$. Under this identification, the subspace $\mathsf D(V)$ is identified with $\mathsf {Der}_\omega(\mathsf L(V))=\{X\in \mathsf {Der}(V)\,|\, X\omega =0\}$ where $\omega=\sum [p_i,q_i]$. Thus $\mathsf D(V)$ is a Lie algebra with bracket coming from $\mathsf {Der}_\omega(V)$.

There is another beautiful interpretation of this Lie algebra in terms of trees:
 \begin{definition}
  Let $\cT(V)$ be the vector space of unitrivalent trees where the univalent vertices are labeled by elements of $V$ and the trivalent vertices each have a specified cyclic order of incident half-edges, modulo the  standard AS, IHX and multilinearity relations.(See Figure~\ref{fig:multilin} for the multilinearity relation.) Let $\cT_k(V)$ be the part with $k$ trivalent vertices. Define a Lie bracket on $\cT(V)$ as follows. Given two labeled trees $t_1,t_2$, the bracket $[t_1,t_2]$ is defined by summing over joining a univalent vertex from $t_1$ to one from $t_2$, multiplying by the contraction of the labels.
\end{definition}
These two spaces $\mathsf{D}_s(V)$ and $\cT_s(V)$ are connected by a map $\eta_s\colon \cT_s(V)\to V\otimes \mathsf L_{s+1}(V)$ defined
by $\eta_s(t)=\sum_x \ell(x)\otimes t_x$ where the sum runs over univalent vertices $x$, $\ell(x)\in V$ is the label of $x$, and $t_x$ is the element of $\mathsf L_{s+1}(V)$ represented by the labeled rooted tree formed by removing the label from $x$ and regarding $x$ as the root. The image of $\eta_s$ is contained in $\mathsf D_s(V)$ and gives an isomorphism $\cT_s(V)\to \mathsf D_s(V)$ in this characteristic $0$ case~\cite{Levine}.

\begin{figure}
$$
\begin{minipage}{2cm}
\includegraphics[width=2cm]{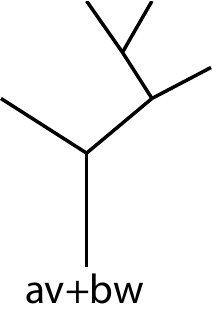}
\end{minipage}
=
a
\begin{minipage}{2cm}
\includegraphics[width=2cm]{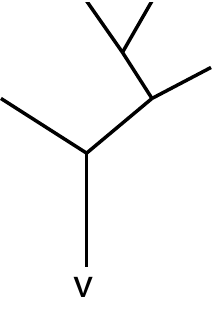}
\end{minipage}
+b
\begin{minipage}{2.7cm}
\includegraphics[width=2.7cm]{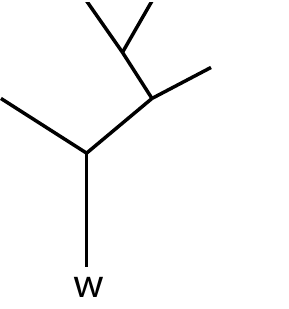}
\end{minipage}
$$
\caption{Multilinearity relation in $\cT(H)$. Here $a,b\in\F$, $v,w\in V$}\label{fig:multilin}
\end{figure}


Now that we understand the target of the Johnson homomorphism, we review the construction of the homomorphism itself.
Let $F=\pi_1(\Sigma_{g,1})$ be a free group on $2g$ generators and given a group $G$, let $G_k$ denote the $k$th term of the lower central series: $G_1=G$ and $G_{k+1}=[G,G_k]$. The Johnson filtration
$$
\Mod(g,1)= \mathbb J_0 \supset \mathbb J_1\supset \mathbb J_2 \supset \cdots
$$
of the mapping class group $\Mod(g,1)$ is defined by letting $\mathbb J_s$ be the kernel of the homomorphism $\Mod(g,1)\to \Aut(F/F_{s+1})$. The \emph{associated graded} $\mathsf J_s$ is defined by $\mathsf J_s=\mathbb J_s/\mathbb J_{s+1}\otimes \F$. (The Johnson filtration is a central series, so that the groups $\mathsf J_s$ are abelian.) Let $\mathsf J=\bigoplus_{s\geq 1} \mathsf J_s$, where we refer to $s$ as the \emph{order}\footnote{Some authors use ``level" instead of ``order."} of the element.

The group commutator on $\Mod(g,1)$ induces a Lie algebra structure on $\mathsf J$.

It is well-known that $\Mod(g,1)\cong \Aut_0(F)$ where $$\Aut_0(F)=\left\{\varphi\in \Aut(F)\,\left|\, \varphi(\prod_{i=1}^g[x_i,y_i])=\prod_{i=1}^g[x_i,y_i]\right. \right\}.$$

\begin{definition}
 The total Johnson homomorphism $\tau\colon \mathsf J\to \mathsf D^+(V)$ is defined as follows. Let $\varphi\in \mathbb J_s$. Then $\varphi$ induces the identity on $\Aut(F/F_{s+1})$. Hence for every $z\in F$, $z^{-1}\varphi(z)\in F_{s+1}$, and we can project to get an element $[z^{-1}\varphi(z)]\in F_{s+1}/F_{s+2}\otimes \F\cong \mathsf L_{s+1}(H)$.
 Define a map $\tau(\varphi)\colon V\to  \mathsf L_{s+1}(V)$ via $z\mapsto [z^{-1}\varphi(z)]$ where $z$ runs over the standard symplectic basis of $V$. By the various identifications, we can regard $\tau(\varphi)$ as being in $V \otimes \mathsf L_{s+1}(V)$.
 The fact that $\varphi$ preserves $\prod_{i=1}^g [x_i,y_i]$ ensures that $\tau(\varphi)\in\mathsf{D}_s(V)\subset V \otimes \mathsf L_{s+1}(V)$.
 \end{definition}

\begin{proposition}[Morita]
The Johnson homomorphism $\tau \colon \mathsf J\to \mathsf D^+(V)$ is an injective homomorphism of Lie algebras.
\end{proposition}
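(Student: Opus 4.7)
The plan is to establish three things about $\tau$: that it descends to the associated graded and lands in $\mathsf{D}^+(V)$, that it is additive and bracket-preserving, and that it is injective. My approach would be to pass to the Magnus/lower-central-series associated graded, where automorphisms in the Johnson filtration become filtered linear maps whose leading piece is a derivation --- a framework in which all three claims become standard facts about filtered Lie algebras.

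First I would fix, for $\varphi\in\mathbb{J}_s$, the notation $\varphi(z)=z\cdot k_\varphi(z)$ with $k_\varphi(z)\in F_{s+1}$, so that $\tau(\varphi)$ is the map $z\mapsto [k_\varphi(z)]\in F_{s+1}/F_{s+2}\cong \mathsf{L}_{s+1}(V)$. Well-definedness on $\mathsf{J}_s$ is immediate: if $\varphi\in\mathbb{J}_{s+1}$ then $k_\varphi(z)\in F_{s+2}$. That $\tau(\varphi)\in\mathsf{D}_s(V)$ follows from the Johnson identity: expanding $\varphi(\prod[x_i,y_i])=\prod[x_i,y_i]$ modulo $F_{s+3}$ and translating gives precisely the equation $\sum[p_i,k_\varphi(q_i)]-[q_i,k_\varphi(p_i)]=0$, which is the vanishing of the bracketing map.

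For additivity, given $\varphi,\psi\in\mathbb{J}_s$, I would compute $k_{\varphi\psi}(z)=k_\varphi(z)\cdot \varphi(k_\psi(z))$ and observe that $\varphi$ acts trivially on $F_{s+1}/F_{s+2}$, so $k_{\varphi\psi}(z)\equiv k_\varphi(z)k_\psi(z)\pmod{F_{s+2}}$. The bracket calculation is the main technical point. For $\varphi\in\mathbb{J}_s$, $\psi\in\mathbb{J}_t$, I would carry out an analogous but longer computation of $k_{[\varphi,\psi]}(z)$ modulo $F_{s+t+2}$; the key step is showing that $\psi(k_\varphi(z))\cdot k_\varphi(z)^{-1}\in F_{s+t+1}$ represents, modulo $F_{s+t+2}$, the Lie bracket $[\tau(\psi),\tau(\varphi)(z)]$ in $\mathsf{L}_{s+t+1}(V)$. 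Equivalently (and conceptually cleaner), I would extend $\varphi$ to a continuous automorphism $\hat\varphi$ of the Magnus completion $\hat T(V)$ of the group algebra of $F$; an element of $\mathbb{J}_s$ acts on the completed free Lie algebra as $\mathrm{Id}+D_\varphi+(\text{degree}{\geq}s{+}1)$, where $D_\varphi\in\mathrm{Der}(\hat{\mathsf{L}}(V))$ has degree $s$ and agrees with $\tau(\varphi)$ upon restriction to $V$. The standard filtered-group/filtered-Lie-algebra correspondence then yields $\widehat{[\varphi,\psi]}=\mathrm{Id}+[D_\varphi,D_\psi]+\cdots$, which reads off as $\tau([\varphi,\psi])=[\tau(\varphi),\tau(\psi)]$ in $\mathsf{D}^+(V)$ under the identification with $\mathsf{Der}_\omega(\mathsf{L}(V))$.

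Finally, injectivity falls out: if $\tau(\varphi)=0$ then $k_\varphi(z)\in F_{s+2}$ for every generator $z$, so $\varphi$ acts trivially on $F/F_{s+2}$ and $\varphi\in\mathbb{J}_{s+1}$, meaning $\varphi$ represents $0$ in $\mathsf{J}_s$. The main obstacle I anticipate is making the bracket computation rigorous at the level of the group $F$ without appealing to the completion --- the commutator identities in $F$ mix terms at several levels of the lower central series and must be tracked with care --- which is why I would prefer the Magnus-expansion route, where the statement reduces to a familiar identity about filtered automorphism groups of pronilpotent Lie algebras.
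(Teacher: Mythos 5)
The paper does not actually prove this proposition: it is stated as a citation to Morita's work, with no argument given. So there is nothing to compare against except the standard literature proof, and your sketch is a faithful outline of exactly that argument. The three ingredients you isolate are the right ones, and each is handled correctly: the symplectic condition $\varphi(\prod[x_i,y_i])=\prod[x_i,y_i]$ expanded modulo $F_{s+3}$ is precisely the vanishing of the bracketing map on $\tau(\varphi)$; the cocycle identity $k_{\varphi\psi}(z)=k_\varphi(z)\cdot\varphi(k_\psi(z))$ together with triviality of the $\varphi$-action on $F_{s+1}/F_{s+2}$ gives additivity; and injectivity of $\mathbb J_s/\mathbb J_{s+1}\to \mathsf D_s(V)$ is immediate from the definition of the filtration (passing to $\otimes\,\F$ preserves this by flatness, a point worth one sentence). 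Your instinct to route the bracket computation through the Magnus completion rather than raw commutator identities in $F$ is the standard way to make it rigorous: an element of $\mathbb J_s$ acts on the completed free Lie algebra as $\mathrm{Id}+D_\varphi+(\text{higher order})$ with $D_\varphi$ the degree-$s$ derivation extending $\tau(\varphi)$, and the leading term of a group commutator of such filtered automorphisms is the commutator of the leading derivations. The only part of your writeup that is genuinely a placeholder rather than a proof is that last identity, but it is a standard fact about filtered automorphism groups and you have correctly reduced the proposition to it.
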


The main object of study for the remainder of this paper is \emph{the Johnson cokernel:}
$$
\mathsf C_s=\mathsf D_s(V)/\tau(\mathsf J_s).
$$
More precisely, we are interested in the stable part of the cokernel and we always assume that $2g=\dim (V)\gg s$.

This is facilitated by a theorem of Hain~\cite{Ha}:
\begin{theorem}[Hain]
In the stable range, the image of the Johnson homomrphism
$\tau$ is the Lie subalgebra of $\mathsf D^+(V)$ generated by $\mathsf D_1(V) = \tau(\mathsf J_1)\cong \ext^3 V$.
\end{theorem}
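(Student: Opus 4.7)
The plan is to follow Hain's original route via the relative Malcev completion of the mapping class group combined with mixed Hodge theory. Let $\mathcal{G}$ denote the completion of $\Mod(g,1)$ relative to its symplectic representation $\Mod(g,1)\to \SP(V)$, and let $\mathsf{u}$ be the Lie algebra of its prounipotent radical. Since $\Mod(g,1)$ is the fundamental group of a smooth quasi-projective variety (the moduli space of curves of genus $g$ with a nonzero tangent vector), a theorem of Hain equips $\mathsf{u}$, and its associated graded $\operatorname{gr}\mathsf{u}$, with a functorial mixed Hodge structure compatible with the Lie bracket.

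The key Hodge-theoretic input is that $\operatorname{gr}^W_{-1}\mathsf{u}$ is pure of weight $-1$, so it is generated by its weight $-1$ piece, and the defining relations of the quadratic presentation of $\operatorname{gr}\mathsf{u}$ are concentrated in weight $-2$, i.e., in bracket-degree $2$. By a direct comparison, in the stable range $\operatorname{gr}\mathsf{u}$ matches the completion of $\mathsf{J}$, and the weight $-1$ part is identified with $\ext^3 V$ by Johnson's original computation: a Dehn twist on a bounding pair maps under $\tau$ to an element of the form $\sum z_i\otimes [x_i,y_i]$, and these span exactly the image of the inclusion $\ext^3 V\hookrightarrow V\otimes \ext^2 V \cong V\otimes \mathsf{L}_2(V)$ obtained by antisymmetrizing. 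Combining these steps, $\mathsf{J}$ is generated as a Lie algebra by $\mathsf{J}_1\cong \ext^3 V$.

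Transporting this across $\tau$ is then formal: since $\tau$ is an injective Lie algebra homomorphism, $\tau(\mathsf{J})\subset \mathsf{D}^+(V)$ is the Lie subalgebra generated by $\tau(\mathsf{J}_1)$, and by the identification above this equals $\ext^3 V\subset \mathsf{D}_1(V)$. Note also that $\mathsf{D}_1(V)$, viewed via $\cT_1(V)$, is precisely the space of tripods (trivalent trees with one internal vertex and three labeled leaves), which under the antisymmetrization isomorphism $\cT_1(V)\cong \ext^3 V$ matches $\tau(\mathsf{J}_1)$ on the nose.

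The main obstacle is obtaining the Hodge-theoretic bound on the weights of the relations: establishing the mixed Hodge structure on the relative completion, verifying that the weight filtration is compatible with the Lie bracket, and extracting the fact that no higher-weight relations appear. Once this is in hand, the remaining arguments (Johnson's identification of $\mathsf{J}_1$ and propagation of Lie generation under an injection) are elementary. An alternative (and technically demanding) route avoiding Hodge theory uses Drinfeld associators and the formality of configuration spaces of punctured surfaces, but the crucial quadraticity statement is ultimately the same input in a different guise.
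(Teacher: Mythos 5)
This statement is not proved in the paper at all: it is quoted as Hain's theorem with a citation to \cite{Ha}, so there is no internal argument to compare against. Your sketch does follow the route of Hain's original proof --- relative Malcev completion of $\Mod(g,1)$ with respect to the symplectic representation, the mixed Hodge structure on the prounipotent radical $\mathsf{u}$, generation of $\operatorname{gr}\mathsf{u}$ in weight $-1$, and Johnson's identification of $\mathsf J_1$ with $\ext^3 V$ via bounding-pair maps --- and the logical skeleton (generation in degree $1$ plus injectivity of $\tau$ implies the image is the Lie subalgebra generated by $\tau(\mathsf J_1)$) is sound.

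Two caveats. First, the step you dismiss as ``a direct comparison'' --- identifying $\operatorname{gr}\mathsf{u}$ with the associated graded of the \emph{Johnson filtration} rather than of the lower central series of the Torelli group --- is itself a nontrivial part of Hain's argument; the Johnson filtration contains the lower central series, and showing that nothing is lost in passing between them (stably, rationally) requires work, not just bookkeeping. Without it you only get that the subalgebra generated by $\tau(\mathsf J_1)$ is \emph{contained} in $\im\tau$, not equality. Second, for the generation statement you only need that $\mathsf{u}$ is generated in weight $-1$; the quadraticity of the relations is a stronger conclusion of Hain's that is not used here, so invoking it slightly overstates the required input. Also, your phrase ``this equals $\ext^3 V\subset \mathsf D_1(V)$'' should read that $\tau(\mathsf J_1)=\mathsf D_1(V)\cong\ext^3 V$, not that $\tau(\mathsf J)$ equals it. As a blind reconstruction of the cited external proof your outline is reasonable, but it is an outline: all of the substantive content is deferred to the Hodge-theoretic machinery you name as the main obstacle.
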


Let $\mathcal S_2\subset \mathcal G_{T(V)\Lie,2}$ be the $\F$ vector space spanned by graphs where one of the two vertices is colored by an element of $\Lie\arity{3}\subset \Lie_V\arity{3}$, or by elements of $\Lie_V\arity{3}$ of degree $1$.
There are three types of graphs that result:
all three i/o slots of a tripod are connected by edges to the rest of the graph;
two of the slots can be connected to each other by an edge, with the other connecting to the rest of the graph by an edge;
or finally, one i/o slot can be labeled by a vector from $V$, while the other two i/o slots are connected to the rest of the graph by edges.

Define $\Omega_r(V)=  \mathcal G^{(r)}_{T(V)\Lie,1}/\partial \mathcal S_2$ for $r\geq 1$.
In~\cite{C}, a generalized trace map

$$
\Tr^C\colon \mathsf C\to \bigoplus_{r\geq 1}\Omega_r(V)
$$
is constructed, which moreover {stably} surjects onto at least the $\SP$-modules $[\lambda]_{\SP}\subset [\lambda]_{\GL}\subset \Omega_r(V)$.

\begin{remark}
In \cite{C}, the target of the trace map is actually a quotient of the first chain group of hairy Lie graph homology: $\mathcal H\Lie\cong \mathcal G_{\Lie_V}$ (Proposition~\ref{prop:hairyG}).
Now $\mathcal G^{(r)}_{\Lie_V,1}\cong \mathcal G^{(r)}_{\overline{\Lie_V},1}$ in ranks $r\geq 1$, since the existence of at least one edge joining the element of $\Lie_V$ to itself makes the arity $\geq 1$. Since $\overline{\Lie_V}\cong T(V)\Lie$ (Proposition~\ref{prop:Lie-iso}), this implies the target $\Tr^C$ can be defined as a quotient of $\mathcal G^{(r)}_{T(V)\Lie,1}$ as stated above.
\end{remark}

The trace map is induced by maps $\Tr^{(r)} \colon \mathcal T(V)\to \mathcal G^{(r)}_{T(V)\Lie,1}$ which are defined by summing over all ways of adding $r$ external edges to a tree, multiplying by the product of the contractions of the endpoint labels. In~\cite{C}, it is shown that $\Tr(\im\tau)\subset \partial \mathcal S_2$, so induces a map from the cokernel to $\Omega_r(V)$.

\begin{proposition}
There is a surjection
$$
\Omega_r(V)\twoheadrightarrow H_1(\mathcal G^{(r)}_{T(V)\Lie}).
$$
Thus there are maps
$\mathsf C_{k +2r}\to  H_1(\mathcal G^{(r)}_{T(V)\Lie})_k$ for $k\geq 1$, which {stably} surject onto at least the $\SP$-modules $[\lambda]_{\SP}\subset [\lambda]_{\GL}$ in the target.
\end{proposition}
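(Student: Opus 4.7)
The plan proceeds in two stages: first produce a natural surjection $\Omega_r(V) \twoheadrightarrow H_1(\mathcal G^{(r)}_{T(V)\Lie})$, and then compose with the trace $\Tr^C$ of \cite{C} and transport the known surjectivity onto $\SP$-summands.

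For the first stage, observe that when $r\geq 1$ every connected graph of topological rank $r$ must carry at least one vertex, since an edge has two endpoints; consequently $\mathcal G^{(r)}_{T(V)\Lie, 0}=0$, every $1$-chain is automatically a cycle, and
\[
H_1\bigl(\mathcal G^{(r)}_{T(V)\Lie}\bigr)\;=\;\mathcal G^{(r)}_{T(V)\Lie, 1}\big/\partial\,\mathcal G^{(r)}_{T(V)\Lie, 2}.
\]
By definition $\mathcal S_2 \subset \mathcal G_{T(V)\Lie, 2}$, so restricting to rank $r$ gives $\partial \mathcal S_2 \subseteq \partial\,\mathcal G^{(r)}_{T(V)\Lie, 2}$ inside $\mathcal G^{(r)}_{T(V)\Lie,1}$. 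The identity on $\mathcal G^{(r)}_{T(V)\Lie,1}$ therefore descends to the desired surjection
\[
\Omega_r(V)\;=\;\mathcal G^{(r)}_{T(V)\Lie, 1}\big/\partial \mathcal S_2 \;\twoheadrightarrow\; \mathcal G^{(r)}_{T(V)\Lie, 1}\big/\partial\,\mathcal G^{(r)}_{T(V)\Lie, 2}\;=\; H_1\bigl(\mathcal G^{(r)}_{T(V)\Lie}\bigr).
\]

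For the second stage, I will precompose with the generalized trace $\Tr^C \colon \mathsf C \to \bigoplus_{r\geq 1}\Omega_r(V)$ from \cite{C}. This map is by construction $\SP$-equivariant and respects the natural grading (tracking how $\Tr^{(r)}$ glues up $2r$ of the $V$-labeled leaves of a tree in $\cT_s(V)$ to form $r$ external edges), so composing with the surjection of the previous paragraph yields the asserted graded maps $\mathsf C_{k+2r}\to H_1(\mathcal G^{(r)}_{T(V)\Lie})_k$ for $k\geq 1$.

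Finally, for the surjectivity onto $[\lambda]_{\SP}\subset [\lambda]_{\GL}$ in the target, I invoke the observation recalled just before Theorem~\ref{thm:intro}: in the stable range $\Tr^C$ already surjects onto each such $\SP$-isotypic summand of $\Omega_r(V)$. Because the quotient $\Omega_r(V)\twoheadrightarrow H_1(\mathcal G^{(r)}_{T(V)\Lie})$ is $\SP$-equivariant and $\SP$ acts semisimply, the image of any copy of $[\lambda]_{\SP}$ is either zero or again a copy of $[\lambda]_{\SP}$; whenever such a copy appears inside $[\lambda]_{\GL}\subset H_1(\mathcal G^{(r)}_{T(V)\Lie})$, the composite therefore surjects onto it. I anticipate no serious obstacle here: the only point to check is that the stable-range bounds from \cite{C,CKV2} for $\Omega_r(V)$ are inherited by the quotient, which is immediate from equivariance and semisimplicity.
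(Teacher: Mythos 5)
Your proof is correct and follows essentially the same route as the paper: the key point is exactly the containment $\partial \mathcal S_2\subset \partial\,\mathcal G^{(r)}_{T(V)\Lie,2}$, which is the paper's entire stated argument, and your observation that $\mathcal G^{(r)}_{T(V)\Lie,0}=0$ (so every $1$-chain is a cycle) together with the $\SP$-equivariance/semisimplicity remarks just makes explicit what the paper leaves implicit.
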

\begin{proof}
This follows because $\partial \mathcal S_2\subset \partial \mathcal G^{(r)}_{T(V)\Lie,2}$.
\end{proof}

Recall that
$$
H_1(\mathcal G^{(r)}_{H\Lie})\cong
\begin{cases}
[\overline{H^{\otimes 1}}]_{\mathbb Z_2}& r=1\\
H^{2r-3}(\Out(F_r);\overline{H^{\otimes r}})&r>1.
\end{cases}
$$

\begin{proposition}\
\begin{enumerate}
\item The composition $\mathsf C_{k+2} \to \left[\overline{T(V)^{\otimes 1}}_{\Z_2}\right]_k\cong \left[V^{\otimes k}\right]_{D_{2k}}$ recovers the Enomoto-Satoh trace invariant when $k\geq 2$.
\item The composition $\mathsf C\to H_1(\mathcal G^{(r)}_{T(V)\Lie})\twoheadrightarrow  H_1(\mathcal G^{(r)}_{\Sym(V)\Lie})$ recovers the CKV trace $\Tr^{CKV}$, which detects the abelianization $\mathsf D^+_{\mathrm{ab}}(V)$.
\end{enumerate}
\end{proposition}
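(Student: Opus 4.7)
The plan is to verify at the level of formulas that all three constructions agree, exploiting the common combinatorial recipe shared by $\Tr^C$, the Enomoto--Satoh trace, and $\Tr^{CKV}$: each is a sum over all ways of adding $r$ external edges to a $V$-labeled tree, weighted by the product of contractions of the paired leaf labels.

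For part (1), I would first unpack the identification of the target. An element of $\cG^{(1)}_{T(V)\Lie,1}$ is a single loop at one trivalent vertex with bivalent $T(V)$-decorations; using the sliding relations in $\overline{\cG_{H\Lie}}$ together with Proposition~\ref{prop:Lie-iso}, one may push all Hopf decorations onto a chain of bivalent vertices around the loop, yielding the isomorphism $[H_1(\cG^{(1)}_{T(V)\Lie})]_k \cong [V^{\otimes k}]_{D_{2k}}$ in which the loop is represented by the cyclic word $v_1 \cdots v_k$ read off around it. Now, given $\phi \in \cT_k(V)$ realized as a tree with leaves labeled $w_0,\ldots,w_{k+1}$, the map $\Tr^{(1)}(\phi)$ sums over pairs $(w_i,w_j)$ of leaves joined by an external edge, weighted by $\la w_i,w_j\ra$. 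For each such pair, the unique simple cycle in the resulting graph passes through a sequence of trivalent vertices whose unused leaves get contracted in pairs off of the cycle, and the surviving labels along the cycle constitute a cyclic word in $V$. Testing this on a linear bracket $w_0 \otimes [w_1,[w_2,\ldots,[w_k,w_{k+1}]]\cdots]$ recovers on the nose the Enomoto--Satoh formula of~\cite{ES}.

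For part (2), the abelianization map $T(V) \twoheadrightarrow \Sym(V)$ induces a surjective chain map $\cG^{(r)}_{T(V)\Lie} \twoheadrightarrow \cG^{(r)}_{\Sym(V)\Lie}$ by applying the quotient on each bivalent decoration. By definition, the CKV trace from~\cite{CKV1} is given by the same sum-over-edges recipe as $\Tr^C$ but with $\Sym(V)$-colored bivalent labels, so $\Tr^{CKV}$ is obtained from $\Tr^C$ by post-composing with this quotient. The claim that $\Tr^{CKV}$ detects the abelianization $\sD^+_{\mathrm{ab}}(V)$ is then the main theorem of~\cite{CKV1}.

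The main obstacle is the unambiguous ``pushing of labels'' in part (1): one must confirm that bivalent decorations can be concentrated into a single cyclic word without introducing extra terms. This follows because $V \subset T(V)$ consists of primitive elements, so that the comultiplication relation in $H\Lie$ at a trivalent vertex reduces to an IHX move involving a $V$-hair, exactly as exhibited in the proof of Proposition~\ref{prop:Lie-iso}. Careful attention to the signs coming from the antipode and from the dihedral symmetry should then suffice to finish the identification.
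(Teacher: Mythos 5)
The paper's own ``proof'' of this proposition is a one-line citation to \cite{C}, so you are attempting to reconstruct an argument the authors deliberately outsourced. Your part (2) is essentially the intended argument and is fine: the Hopf algebra surjection $T(V)=U(\sL(V))\twoheadrightarrow \Sym(V)=U(V)$ induced by abelianizing the free Lie algebra gives a surjective chain map $\cG^{(r)}_{T(V)\Lie}\twoheadrightarrow\cG^{(r)}_{\Sym(V)\Lie}$, the CKV trace is the same add-$r$-edges-and-contract recipe with symmetric coefficients, and the detection of $\sD^+_{\mathrm{ab}}(V)$ is quoted from \cite{CKV1} (together with Hain's theorem).

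Part (1) is where the content lies, and your sketch has one outright misstatement and one genuine gap. The misstatement: for $r=1$ exactly one pair of leaves is contracted, namely the pair joined by the single external edge; the remaining $k$ labels are \emph{not} ``contracted in pairs off of the cycle'' --- they survive as hairs which, after IHX/sliding, become the $k$ letters of the cyclic word. (If further pairs were contracted, the degree count $\mathsf C_{k+2}\to[V^{\otimes k}]_{D_{2k}}$ in the statement you are proving would already fail.) The gap: you never confront the actual discrepancy between the two formulas being compared. The map $\Tr^{(1)}$ sums over \emph{all} pairs of leaves of a tree $t\in\cT(V)$, whereas the Enomoto--Satoh trace of \cite{ES} is a single contraction of the two distinguished tensor factors of $\mathsf D\subset V\otimes \sL(V)\subset V^{\otimes\bullet}$, evaluated on $\eta(t)=\sum_x\ell(x)\otimes t_x$. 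Matching these requires expanding each Lie monomial $t_x$ into words, grouping the leading letters by the leaf they come from, and verifying that the truncated words assemble into exactly the cyclic word of the graph $t$ with the $xy$-edge attached --- and this identification only holds modulo the relations imposed in $\Omega_1(V)=\cG^{(1)}_{T(V)\Lie,1}/\partial\mathcal S_2$ and in the passage to $H_1$, and only for $k\geq 2$ (which is why that hypothesis appears). ``Testing on a linear bracket $w_0\otimes[w_1,[\dots]]$'' does not discharge this: a single iterated bracket is neither a tree in $\cT(V)$ nor an element of $\mathsf D$, so neither map is being evaluated on it; the comparison must be made on $\eta(t)$ for spanning trees $t$, where both sides are sums over leaf pairs. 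As written, part (1) asserts the conclusion at precisely the step where \cite{C} does the work.
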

\begin{proof}
See ~\cite{C}.
\end{proof}

\section{Some rank 2 computations}\label{sec:calculations}
The previous section showed that $H^1(\Out(F_2);\overline{T(V)^{\otimes 2}})$ is the target of an invariant of the Johnson cokernel, so we would like to calculate this cohomology. This turns out to be a difficult question in general, but we make partial progress in this section by considering the quotient of $T(V)\cong U(\mathsf L(V))$ by commutators of length $3$: i.e. $U(\mathsf L_{(2)}(V))$, where $\mathsf{L}_{(k)}=\mathsf{L}(V)\left/\left(\oplus_{i\geq {k+1}}\mathsf{L}_i(V)\right)\right.$ is the free nilpotent Lie algebra of nilpotency class $k$.  (In what follows, we will sometimes suppress the dependence of $\mathsf L(V)$ and $\mathsf L_{(2)}(V)$ on $V$.)

It follows from Theorem~\ref{thm:class2} that
$\overline{\Sym(\sL_{(2)}\otimes\F^2)}$ and  $\overline{U(\sL_{(2)})^{\otimes 2}}$ are isomorphic as $\SL^{\pm}_2(\F)\times \GL(V)$-modules. From this we deduce the following corollary.

\begin{corollary}
There is a surjection
$$
H^1(\Out(F_2);\overline{T(V)^{\otimes 2}})\twoheadrightarrow
H^1(\GL_2(\Z);\overline{\Sym(\sL_{(2)}\otimes \F^2)})
$$
where $\GL_2(\Z)$ acts on $\overline{\Sym(\mathsf{L}_{(2)}\otimes \F^2)}$ by acting via the standard action on $\F^2$.
\end{corollary}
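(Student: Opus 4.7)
The plan is to construct a surjective $\Out(F_2)$-equivariant map $\overline{T(V)^{\otimes 2}} \twoheadrightarrow \overline{\Sym(\sL_{(2)}\otimes \F^2)}$ and then to pass to $H^1$ using the fact that $\GL_2(\Z)$ has virtual cohomological dimension $1$ over the characteristic-zero field $\F$.

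First I would note that the canonical Lie algebra quotient $\sL(V) \twoheadrightarrow \sL_{(2)}(V)$ induces, by functoriality of the universal enveloping algebra, a surjective morphism of cocommutative Hopf algebras $\pi \colon T(V) = U(\sL(V)) \twoheadrightarrow U(\sL_{(2)})$. By Remark~\ref{rem:action-nt}, the induced map $\pi^{\otimes 2} \colon T(V)^{\otimes 2} \to U(\sL_{(2)})^{\otimes 2}$ is a natural transformation between the functors $\mathfrak{F}_{T(V)}$ and $\mathfrak{F}_{U(\sL_{(2)})}$, and is therefore $\Aut(F_2)$-equivariant. Because $\pi$ preserves $m$, $\Delta$, and $S$ it intertwines the conjugation operations $\circledast$, so $\pi^{\otimes 2}$ sends $\widetilde{T(V)^{\otimes 2}}$ into $\widetilde{U(\sL_{(2)})^{\otimes 2}}$ and descends to a surjection $\overline{\pi^{\otimes 2}} \colon \overline{T(V)^{\otimes 2}} \twoheadrightarrow \overline{U(\sL_{(2)})^{\otimes 2}}$ of $\Out(F_2)$-modules. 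Composing with the inverse of the isomorphism $\overline{\sigma} \colon \overline{\Sym(\sL_{(2)}\otimes \F^2)} \xrightarrow{\cong} \overline{U(\sL_{(2)})^{\otimes 2}}$ furnished by Theorem~\ref{thm:class2} yields the desired $\GL_2(\Z)$-equivariant surjection
$$\Phi \colon \overline{T(V)^{\otimes 2}} \twoheadrightarrow \overline{\Sym(\sL_{(2)}\otimes \F^2)}.$$

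Next, letting $K = \ker \Phi$, the short exact sequence $0 \to K \to \overline{T(V)^{\otimes 2}} \to \overline{\Sym(\sL_{(2)}\otimes \F^2)} \to 0$ gives a long exact sequence whose relevant segment is
$$H^1(\GL_2(\Z); \overline{T(V)^{\otimes 2}}) \xrightarrow{\Phi_*} H^1(\GL_2(\Z); \overline{\Sym(\sL_{(2)}\otimes \F^2)}) \to H^2(\GL_2(\Z); K).$$
Surjectivity of $\Phi_*$ then reduces to the vanishing of the rightmost group. For this I would invoke the presentation $\SL_2(\Z) \cong \Z/4 \ast_{\Z/2} \Z/6$: the associated Mayer--Vietoris sequence, combined with the semisimplicity of $\F[G]$ for finite $G$ when $\mathrm{char}\, \F = 0$, gives $H^i(\SL_2(\Z); M) = 0$ for all $i \geq 2$ and every $\F$-vector space module $M$. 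The same vanishing transfers to $\GL_2(\Z)$ via the Hochschild--Serre spectral sequence for the extension $1 \to \SL_2(\Z) \to \GL_2(\Z) \to \Z/2 \to 1$, so in particular $H^2(\GL_2(\Z); K) = 0$ and $\Phi_*$ is surjective.

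I expect no real obstacles here: the substantive input is Theorem~\ref{thm:class2}, which has already been established, and the only delicate point is verifying that $\pi^{\otimes 2}$ respects the conjugation subspaces, which is immediate from $\pi$ being a Hopf algebra morphism. The cohomological dimension bound for $\GL_2(\Z)$ is standard and makes the passage from surjectivity of modules to surjectivity of $H^1$ automatic.
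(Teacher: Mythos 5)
Your proof is correct, but the second half takes a genuinely different route from the paper. The paper never writes down a long exact sequence in group cohomology: it works at the level of graph complexes, using Theorem~\ref{thm:HReduced} to identify $H^1(\Out(F_2);\overline{H^{\otimes 2}})$ with $H_1(\overline{\cG_{H\Lie}^{(2)}})$, which is the \emph{bottom} homology of the chain complex and hence simply the cokernel of the differential $\partial\colon \cG^{(2)}_{H\Lie,2}\to\cG^{(2)}_{H\Lie,1}$. The evident chain-level surjection $\mathcal G^{(2)}_{U(\mathfrak g)\Lie,1}\twoheadrightarrow \mathcal G^{(2)}_{U(\mathfrak h)\Lie,1}$ induced by $\mathfrak g\twoheadrightarrow\mathfrak h$ then passes immediately to a surjection on $H_1$, with no cohomological vanishing needed; this is why the paper states the general fact for $H^{vcd}(\Out(F_r);-)$ for all $r$ in one line. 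Your argument instead constructs the module surjection $\Phi$ directly (your verification that $\pi^{\otimes 2}$ is $\Aut(F_2)$-equivariant and respects the conjugation subspaces, via the fact that the action and $\circledast$ are built from the Hopf operations, is exactly right and is the content the paper leaves implicit), and then kills the obstruction group $H^2(\GL_2(\Z);K)$ using the amalgam decomposition of $\SL_2(\Z)$ and Maschke's theorem in characteristic zero. Both are valid; your version is self-contained on the group-cohomology side and avoids the graph-homology identification, at the cost of invoking the vcd-type vanishing (which, as stated, is specific to $r=2$, whereas the paper's chain-level argument generalizes verbatim to all $r\geq 2$ because top cohomology is always bottom graph homology there).
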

\begin{proof}
The surjection follows from the general statement that if $\mathfrak{g} \twoheadrightarrow \mathfrak{h}$ is surjective it induces a surjection
$H^{vcd}(\Out(F_r); \overline{U(\mathfrak{g})^{\otimes r}}) \twoheadrightarrow H^{vcd}(\Out(F_r); \overline{U(\mathfrak{h})^{\otimes r}}) $. The easiest way to see that is on the level of
$H_1(\mathcal G^{(r)}_{H\Lie})$ since there is a clear surjection $\mathcal G^{(r)}_{U(\mathfrak g)\Lie,1}\twoheadrightarrow \mathcal G^{(r)}_{U(\mathfrak h)\Lie,1}$, which remains surjective upon dividing by boundaries. 
\end{proof}

Because the action of $\GL_2(\Z)$ in the previous corollary is a standard action, the cohomology is amenable to attack using classical theory. This computation is given in the following theorem.

\begin{theorem}\label{thm:L2calc}
The cohomology group $H^1(\GL_2(\Z);\overline{\Sym(\mathsf{L}_{(2)}\otimes \F^2)})$ is isomorphic to
$$
\bigoplus_{k>\ell\geq 0} \left(\mathcal S_{2k-2\ell+2}\otimes \overline{\SF{(2k,2\ell)}}(\sL_{(2)}) \right)\oplus
\bigoplus_{k>\ell\geq 0} \left(\mathcal M_{2k-2\ell+2}\otimes \overline{\SF{(2k+1,2\ell+1)}}(\sL_{(2)})\right)
$$
where $\mathcal M_s$ and $\mathcal S_s$ are spaces of weight $s$ modular forms and cusp forms respectively. Here $\overline{\SF{\lambda}}(\sL_{(2)})$ is the quotient the Schur functor $\SF{\lambda}(\sL_{(2)})$ by the image of the adjoint action.
\end{theorem}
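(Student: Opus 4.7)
The strategy is to reduce to a classical Eichler-Shimura calculation by first decomposing the coefficient module into $\GL_2$-isotypic components. The first step is to apply the Cauchy/Howe decomposition
\[
\Sym(\sL_{(2)}\otimes \F^2) \;\cong\; \bigoplus_{a\geq b\geq 0} \SF{(a,b)}(\sL_{(2)})\otimes \SF{(a,b)}(\F^2),
\]
where only partitions of length at most two appear because $\dim \F^2 = 2$. The adjoint action of $\sL_{(2)}$ on $\sL_{(2)}\otimes \F^2$ is of the form $\ad(X)\otimes\id$, so it acts only on the left-hand Schur factor and commutes with the $\GL_2$-action on the right. Consequently it preserves this decomposition, giving
\[
\overline{\Sym(\sL_{(2)}\otimes \F^2)} \;\cong\; \bigoplus_{a\geq b\geq 0} \overline{\SF{(a,b)}(\sL_{(2)})}\otimes \SF{(a,b)}(\F^2).
\]
Since the first tensor factor carries trivial $\GL_2(\Z)$-action, the cohomology pulls out coefficient-wise:
\[
H^1(\GL_2(\Z); \overline{\Sym(\sL_{(2)}\otimes \F^2)}) \;\cong\; \bigoplus_{a\geq b\geq 0} \overline{\SF{(a,b)}(\sL_{(2)})}\otimes H^1(\GL_2(\Z);\SF{(a,b)}(\F^2)).
\]

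The next step is to determine which partitions contribute and in what form. Writing $\SF{(a,b)}(\F^2)=\det^{b}\otimes \Sym^{a-b}(\F^2)$, the central element $-I\in\SL_2(\Z)$ acts as the scalar $(-1)^{a-b}$, so in characteristic zero the cohomology vanishes whenever $a-b$ is odd. This leaves precisely the two families $(a,b)=(2k,2\ell)$ and $(a,b)=(2k+1,2\ell+1)$. On $\GL_2(\Z)$ the character $\det^{b}$ is trivial when $b$ is even and equals the sign character when $b$ is odd, so setting $m=k-\ell$ the relevant $\GL_2(\Z)$-cohomology groups reduce to
\[
H^1(\GL_2(\Z);\Sym^{2m}(\F^2)) \quad\text{and}\quad H^1(\GL_2(\Z);\det\otimes \Sym^{2m}(\F^2)).
\]

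The last step is a classical Eichler-Shimura computation, carried out via the Hochschild-Serre spectral sequence for the index-two inclusion $\SL_2(\Z)\subset \GL_2(\Z)$. The outer involution acts on $H^1(\SL_2(\Z);\Sym^{2m}(\F^2))$, and its $+1$ and $-1$ eigenspaces pick out, respectively, $\mathcal S_{2m+2}$ for the untwisted coefficients and $\mathcal M_{2m+2}$ for the $\det$-twisted coefficients, both vanishing when $m=0$ (since $\mathcal M_2 = 0$ at level one and the trivial representation gives no $H^1$). Substituting this back with $m=k-\ell$ yields the claimed formula, with the restriction $k>\ell$ arising from $m\geq 1$.

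The main technical obstacle is the last step: identifying which eigenspace of the outer involution captures cusp forms and which captures all modular forms, and pinning down on which side of the Eichler-Shimura splitting the $\det$-twist lands. This involution analysis is exactly what was done in~\cite{CKV1} for the symmetric-algebra coefficients, and the proof of the present theorem amounts to importing that computation once the decomposition above has been established.
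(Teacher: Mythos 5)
Your proposal is correct and follows essentially the same route as the paper: decompose $\Sym(\sL_{(2)}\otimes\F^2)$ via the Cauchy formula into $\bigoplus_\lambda \SF{\lambda}(\sL_{(2)})\otimes\SF{\lambda}(\F^2)$, observe that the adjoint action acts only on the first factor and commutes with $\GL_2(\Z)$ so the quotient and the cohomology decompose termwise, and then invoke the computation of $H^1(\GL_2(\Z);\SF{(k,\ell)}(\F^2))$ from~\cite{CKV1}. The extra detail you supply on the parity/central-character vanishing and the Eichler--Shimura step is a correct elaboration of what the paper leaves to the citation.
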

\begin{proof}

We have
$$
H^1(\GL_2(\Z);\overline{\Sym(\sL_{(2)}\otimes \F^2)})=
\sum_{\lambda}  H^1(\GL_2(\Z);\SF{\lambda}(\F^2)) \otimes  \left( \SF{\lambda}(L) / \ad(\sL)\cdot ( \SF{\lambda}(\sL) ) \right).
$$
The first homology $H^1(\Out(F_2);\SF{\lambda}(\F^2))$ is computed in~\cite{CKV1}:
$$
H^1(\Out(F_2);\SF{(k,\ell)}(\F^2))=\begin{cases}
0& \text{if }k+\ell\text{ is odd}\\
\mathcal S_{k-\ell+2} &\text{if } k,\ell\text{ are both even} \\
\mathcal M_{k-\ell+2}&\text{if } k,\ell\text{ are both odd}
\end{cases}
$$
completing the proof.
\end{proof}

Notice that projecting $\sL_{(2)}\to \sL_{(1)}\cong V$ recovers the calculation from~\cite{CKV1}. The task now becomes to understand the modules $\overline{\SF{\lambda}(\sL_{(2)})}$ as direct sums of irreducible $\GL(V)$-representations.

We begin by stating a well-known lemma. See e.g. \cite[Example 6.11]{FH}.
\begin{lemma}\label{lem:schurdecomp}
For any two vector spaces $V$ and $W$ we have
$$
\SF{\lambda}(V\oplus W) =
\bigoplus c_{\mu,\nu}^{\lambda} \SF{\mu}(V) \otimes \SF{\nu}(W),
$$
where $c_{\mu,\nu}^{\lambda}$ denotes the multiplicity of
$P_\lambda$ in the representation $P_\mu \circ P_\nu = \mathrm{Ind}^{\sym{|\lambda|}}_{\sym{|\mu|} \times \sym{|\nu|}}$, which can be computed using the Littlewood Richardson rule.
\end{lemma}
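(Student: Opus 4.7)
The plan is to deduce the decomposition from the Schur--Weyl realization of the Schur functor,
$$
\SF{\lambda}(U) \;=\; \Hom_{\sym{n}}\!\left([\lambda]_{\sym{n}},\, U^{\otimes n}\right),
$$
where $n=|\lambda|$, combined with Frobenius reciprocity and the Littlewood--Richardson rule.

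First, I would expand the tensor power as an $\sym{n}$-module. For each of the $n$ tensor slots one chooses whether it is colored by $V$ or $W$; grouping the $2^n$ resulting pure summands by the orbits of the natural $\sym{n}$-action on subsets of $[n]$ yields
$$
(V\oplus W)^{\otimes n} \;\cong\; \bigoplus_{k+\ell = n} \mathrm{Ind}_{\sym{k}\times\sym{\ell}}^{\sym{n}}\!\left(V^{\otimes k} \boxtimes W^{\otimes \ell}\right),
$$
since the stabilizer of a $k$-element subset is exactly $\sym{k}\times\sym{\ell}$.

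Next, I would apply $\Hom_{\sym{n}}([\lambda]_{\sym{n}},-)$ and use Frobenius reciprocity to rewrite the right-hand side as
$$
\bigoplus_{k+\ell=n}\Hom_{\sym{k}\times\sym{\ell}}\!\left(\mathrm{Res}^{\sym{n}}_{\sym{k}\times\sym{\ell}}[\lambda]_{\sym{n}},\; V^{\otimes k}\boxtimes W^{\otimes \ell}\right).
$$
The Littlewood--Richardson rule gives the restriction formula
$$
\mathrm{Res}^{\sym{n}}_{\sym{k}\times\sym{\ell}}[\lambda]_{\sym{n}} \;=\; \bigoplus_{|\mu|=k,\ |\nu|=\ell} c^{\lambda}_{\mu,\nu}\, [\mu]_{\sym{k}} \boxtimes [\nu]_{\sym{\ell}},
$$
and the multiplicities appearing here coincide, by a second application of Frobenius reciprocity, with those defined in the statement via the induction product $[\mu]\circ[\nu]$. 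Substituting and recognizing each factor $\Hom_{\sym{k}}([\mu]_{\sym{k}},V^{\otimes k})=\SF{\mu}(V)$ (and similarly for $W$) produces the claimed decomposition.

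The only nontrivial input is the Littlewood--Richardson rule itself, which is classical, so no genuine obstacle arises. An equivalent route proceeds via characters: both sides are polynomial $\GL(V)\times\GL(W)$-representations, and the Schur-polynomial identity $s_\lambda(x,y) = \sum_{\mu,\nu} c^\lambda_{\mu,\nu}\, s_\mu(x)\, s_\nu(y)$ --- a standard reformulation of the Littlewood--Richardson rule --- forces the decomposition. Either route makes it clear that the $c^\lambda_{\mu,\nu}$ are indeed the Littlewood--Richardson coefficients as defined in the lemma.
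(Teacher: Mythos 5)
Your argument is correct. The paper itself gives no proof of this lemma---it simply cites Fulton--Harris, Example 6.11, as the result is classical---and your derivation via the Schur--Weyl realization $\SF{\lambda}(U)=\Hom_{\sym{n}}([\lambda],U^{\otimes n})$, the orbit decomposition of $(V\oplus W)^{\otimes n}$ into induced modules, Frobenius reciprocity, and the Littlewood--Richardson restriction rule is exactly the standard proof found in that reference (and your character-theoretic alternative via $s_\lambda(x,y)=\sum c^\lambda_{\mu,\nu}s_\mu(x)s_\nu(y)$ is the other standard route). Each step checks out, so there is nothing to correct.
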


\begin{proposition}
\label{prop:onepart}
$\SF{(k)}(\ext^2 V)=\bigoplus_{\lambda\in \mathcal A_k} \SF{\lambda}(V),$ where $\mathcal A_k$ is the set of Young diagrams with $2k$ boxes and an even number of boxes in each column.
\end{proposition}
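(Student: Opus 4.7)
The statement is the classical plethysm identity $h_k[e_2]=\sum_{\lambda\in\mathcal A_k}s_\lambda$, so I would prove it by a character/generating-function argument rather than by a direct combinatorial construction. Recall $\SF{(k)}(W)=\Sym^k(W)$, so we are decomposing $\Sym^k(\ext^2 V)$ into irreducible polynomial $\GL(V)$-representations, and it suffices to match characters (equivalently, Schur-function expansions of the formal characters in the eigenvalues $x_1,\ldots,x_n$ of a diagonal torus element).

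First I would compute the character of $\ext^2 V$ as $e_2(x)=\sum_{i<j}x_ix_j$, so that
$$
\sum_{k\geq 0}\mathrm{ch}\,\Sym^k(\ext^2 V)\;=\;\prod_{i<j}\frac{1}{1-x_ix_j}.
$$
Next I would invoke the classical Littlewood identity
$$
\prod_{i<j}\frac{1}{1-x_ix_j}\;=\;\sum_{\mu}s_\mu(x),
$$
where $\mu$ ranges over partitions whose conjugate $\mu'$ has all even parts, i.e., over partitions with all columns of even length. (A textbook reference is Macdonald, \emph{Symmetric Functions and Hall Polynomials}, Chapter I, \S5, Example 5; it can be proved by computing $\prod_{i<j}(1-x_ix_j)^{-1}$ using the dual Cauchy identity restricted to the set $\{x_ix_j:i<j\}$, or via $(\mathrm{Sp}_{2n},\mathrm{GL}_m)$ Howe duality on $\Sym(\F^{2n}\otimes\F^m)$.) Extracting the homogeneous component of total degree $2k$ in the $x_i$'s from both sides then gives
$$
\mathrm{ch}\,\Sym^k(\ext^2 V)\;=\;\sum_{\lambda\in\mathcal A_k}s_\lambda(x),
$$
which by linear independence of Schur polynomials (for $n=\dim V$ large, and then by stability) yields the asserted decomposition.

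The main thing to be careful about is the bookkeeping convention: ``even columns'' versus ``even rows,'' which corresponds to transposing the partition and interchanges this plethysm with its companion $\ext^k(\Sym^2 V)=\bigoplus_{\lambda'\in\mathcal A_k}\SF{\lambda}(V)$. Apart from fixing this convention to match the definition of $\mathcal A_k$ in the statement, the argument is a direct quotation of Littlewood's identity, so I would not expect any serious obstacle; the proof in the paper is likely a one-line citation to Macdonald or to \cite{FH}.
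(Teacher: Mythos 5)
Your proposal is correct, and the paper's own ``proof'' is just a citation to Macdonald (Example I.8.6) and Weyman (Proposition 2.3.8), which record exactly the plethysm/Littlewood identity $h_k[e_2]=\sum_{\lambda'\ \mathrm{even}}s_\lambda$ that you prove; your convention check (columns, not rows, of even length) is also the right one for $\Sym^k(\ext^2 V)$ as opposed to $\ext^k(\Sym^2 V)$. So you have simply supplied the standard argument behind the reference the authors invoke.
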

\begin{proof}
See  \cite[Example I.8.6]{MacDonald} or   \cite[Proposition 2.3.8]{Weyman}.
\end{proof}

\begin{remark}
We have the following algorithm for calculating $\SF{\lambda}(\ext^2 V)$. When $\lambda$ has only one part this is Proposition~\ref{prop:onepart}.
For $\lambda$ consisting of two parts this can be does using a few tensor products
(using that $\SF{(p,q)}(W)$ is the kernel of a surjective map from
$\SF{(p)}(W) \otimes \SF{(q)}(W)$ to $\SF{(p+1)}(W) \otimes \SF{(q-1)}(W)$
which can be done by using the LR rule).\end{remark}

Together with Lemma~\ref{lem:schurdecomp}, this leads to a method for computing $\SF{\lambda}(\sL_{(2)})$.

\begin{example}
We illustrate the method by calculating $\SF{(2,1)}(V\oplus \ext^2 V)$. The partition $[21]$ appears with multiplicity $1$ when multiplying $[1]$ and $[2]$, $[1]$ and $[1^2]$ or $[21]$ and $[0]$. So we get
\begin{align*}
\SF{(2,1)}(V\oplus \ext^2 V)= &\SF{(1)}(V)\otimes\SF{(2)}(\ext^2 V)\oplus  \SF{(2)}(V)\otimes\SF{(1)}(\ext^2 V)\oplus
\SF{(1)}(V)\otimes\SF{(1^2)}(\ext^2 V)\\ &\oplus \SF{(1^2)}(V)\otimes\SF{(1)}(\ext^2 V)\oplus \SF{(2,1)}(V)\oplus \SF{(2,1)}(\ext^2 V)
\end{align*}
By Lemma~\ref{lem:schurdecomp}, we have $\SF{(2)}(\ext^2 V)\cong [1^4]\oplus [2^2]$. We compute $\SF{[1^2]}(\ext^2V)$ as the kernel of the map
$\ext^2 V\otimes \ext^2 V\to S^2(\ext^2 V)$. By the LR rule, this is the kernel of the map $[21^2]\oplus [2^2]\oplus [1^4]\to [1^4]\oplus [2^2]$. Hence
$\SF{[1^2]}(\ext^2V)\cong [21^2]$.  Finally, we compute $\SF{(2,1)}(\ext^2 V)$ as the kernel of the map
$$S^2\ext^2 V\otimes \ext^2V\to S^3(\ext^2 V),$$
which is the same as $$([1^4]\oplus [2^2])\otimes [1^2]\to [1^6]\oplus [2^21^2]\oplus[3^2].$$
By LR, we have $$([1^4]\oplus [2^2])\otimes [1^2]\cong [21^4]\oplus[2^21^2]\oplus [1^6]\oplus [2^21^2]\oplus [3^2]\oplus [321].$$
Hence $\SF{(2,1)}(\ext^2 V)=[321]\oplus[2^21^2]\oplus [21^4]$.
Putting these calculations together, we get
\begin{align*}
\SF{(2,1)}(V\oplus \ext^2 V)=&[1]\otimes ( [1^4]\oplus [2^2])\oplus ([2]\otimes [1^2]) \oplus ([1]\otimes [21^2])\\&\oplus ([1^2]\otimes[1^2])\oplus
[21]\oplus ([321]\oplus[2^21^2]\oplus [21^4])\\
=& [1^5]\oplus 2[21^3]\oplus[32]\oplus 2[2^21]\oplus [31]\oplus 2[21^2]\oplus [31^2]\oplus[1^4]\\
&\oplus[2^2]\oplus[21]\oplus[321]\oplus[2^21^2]\oplus[21^4]
\end{align*}
These computations can be efficiently carried out using Sage. After loading the relevant library using \texttt{SF = SymmetricFunctions(QQ)} and
defining \texttt{s = SF.schur()}, the above computation can be verified with the command \texttt{s([2,1])(s[1]+s([1,1]))}.

\end{example}

Now we need to compute the image of the adjoint action.

\begin{lemma}\label{lem:ad}
The map $\ad:V \otimes \SF{\lambda}(V) \to \SF{\lambda}(\sL_{(2)})$ induced by the Lie bracket is injective
for all partitions $\lambda$ with at least two parts; when $\lambda = (n)$ the kernel is isomorphic to
$\SF{(n+1)}(V)$.
\end{lemma}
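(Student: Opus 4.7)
The plan is to proceed summand-by-summand using Pieri's rule. By Pieri, $V \otimes \SF{\lambda}(V) = \bigoplus_r \SF{\lambda+\epsilon_r}(V)$, summed over addable corners $\epsilon_r$; since $\ad$ is $\GL(V)$-equivariant, Schur's lemma reduces the problem to deciding, on each irreducible summand $\SF{\lambda+\epsilon_r}(V)$, whether $\ad$ is zero or injective.

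Next I will locate the image. The only nonzero component of the bracket on $\sL_{(2)}$ is $V \otimes V \to \ext^2 V$, so $\ad$ lands in $\bigoplus_s \SF{\lambda-\epsilon_s}(V) \otimes \ext^2 V$ (the $\nu = (1)$ piece of the decomposition in Lemma~\ref{lem:schurdecomp}, summed over removable corners $\epsilon_s$). By the Littlewood--Richardson rule, $\SF{\lambda+\epsilon_r}(V)$ appears in $\SF{\lambda-\epsilon_s}(V) \otimes \SF{(1,1)}(V)$ if and only if the skew shape $(\lambda+\epsilon_r)/(\lambda-\epsilon_s)$ is a vertical strip of size two, i.e., if and only if $r \neq s$. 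Hence if $\lambda$ has no removable corner outside row $r$, the summand $\SF{\lambda+\epsilon_r}(V)$ automatically lies in $\ker\ad$.

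For $\lambda = (n)$, the unique removable sits in row $1$, so $\SF{(n+1)}(V)$ is killed by this obstruction, while a direct check on the highest weight vector $w = e_2 \otimes e_1^n - e_1 \otimes e_1^{n-1}e_2$ of $\SF{(n,1)}(V)$ gives $\ad(w) = (n+1)\, e_1^{n-1} \otimes [e_2, e_1] \neq 0$, producing the claimed kernel. For $\lambda$ with at least two parts, every addable $\epsilon_r$ admits a compatible removable $\epsilon_s \neq \epsilon_r$: if $\lambda$ has two or more removable corners then at most one can lie in row $r$, and if $\lambda$ is the exceptional case of a rectangle $(a^b)$ with $b \geq 2$, its unique removable sits in row $b$ while the addables lie only in rows $1$ and $b+1$.

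The remaining and most technical step is to verify that $\ad$ is genuinely nonzero on each $\SF{\lambda+\epsilon_r}(V)$ in the at-least-two-parts case. I will do this by a highest weight computation, factoring $\ad$ through the polarization $\pi_s \colon \SF{\lambda}(V) \hookrightarrow V \otimes \SF{\lambda-\epsilon_s}(V)$ (the section of the Pieri multiplication), so that the composition $V \otimes \SF{\lambda}(V) \xrightarrow{\id \otimes \pi_s} V^{\otimes 2} \otimes \SF{\lambda-\epsilon_s}(V) \twoheadrightarrow \ext^2 V \otimes \SF{\lambda-\epsilon_s}(V)$ (antisymmetrizing the first two tensor factors) coincides with $\ad$ followed by the projection to the $s$-summand. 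For $r = 1$ the HW vector of $\SF{\lambda+\epsilon_1}(V) \subset V \otimes \SF{\lambda}(V)$ is simply $e_1 \otimes v^{\mathrm{hw}}_\lambda$, and $\ad(e_1)\, v^{\mathrm{hw}}_\lambda$ replaces each $e_i$ ($i \geq 2$) appearing in $v^{\mathrm{hw}}_\lambda$ by $[e_1, e_i]$, which is visibly nonzero since $\lambda_2 \geq 1$. For $r > 1$ the HW vector picks up corrections from the column antisymmetrizer for the column containing the newly added box; the corresponding direct calculation, which can be restricted to $\mathrm{span}(e_1,\ldots,e_r) \subset V$, shows that the image in $\sL_{(2)}^{\otimes |\lambda|}$ is nonzero, and hence so is its image in $\SF{\lambda}(\sL_{(2)})$. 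The hardest aspect will be the HW-vector bookkeeping in the $r > 1$ case, but the calculation directly follows the template established for $\lambda = (n)$.
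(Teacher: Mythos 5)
Your reduction is set up correctly: Pieri gives $V\otimes\SF{\lambda}(V)=\bigoplus_r\SF{\lambda+\epsilon_r}(V)$ with multiplicity one, the degree-$(|\lambda|+1)$ part of $\SF{\lambda}(\sL_{(2)})$ is $\bigoplus_s\SF{\lambda-\epsilon_s}(V)\otimes\ext^2V$, and your Littlewood--Richardson analysis correctly identifies which summands $\SF{\lambda+\epsilon_r}(V)$ can possibly map nontrivially (all of them when $\lambda$ has at least two parts, all but $\SF{(n+1)}(V)$ when $\lambda=(n)$). But Schur's lemma only tells you that on each such summand $\ad$ is either zero or injective, so the entire content of the injectivity claim is concentrated in the nonvanishing check --- and for $r>1$ you do not carry it out. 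You assert that a ``direct calculation'' works and that it ``directly follows the template established for $\lambda=(n)$.'' It does not: the $r=1$ template rests on the highest weight vector of $\SF{\lambda+\epsilon_1}(V)\subset V\otimes\SF{\lambda}(V)$ being the pure tensor $e_1\otimes v^{\mathrm{hw}}_{\lambda}$, whereas for $r>1$ the highest weight vector of $\SF{\lambda+\epsilon_r}(V)$ is a genuine linear combination $\sum_i e_i\otimes w_i$ (already for $\lambda=(2,1)$ and $r=2,3$), and whether $\sum_i\ad(e_i)(w_i)$ survives in $\SF{\lambda}(\sL_{(2)})$ is exactly the question at issue. As written, your argument shows only that the kernel is a sum of some subset of the $\SF{\lambda+\epsilon_r}(V)$ with $r$ such that $\lambda$ has a removable corner outside row $r$; it does not show that subset is empty. (Even your $r=1$ ``visibly nonzero'' step deserves a sentence ruling out cancellation among the Leibniz terms, though that one is easily repaired.)

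The paper sidesteps this entirely, and you may want to adopt its reduction. It embeds $\SF{\lambda}(V)\hookrightarrow\bigotimes_i \Sym^{\lambda_i}(V)$ and computes the kernel of $\ad$ on all of $V\otimes\bigotimes_i \Sym^{\lambda_i}(V)$: projecting the target onto the summand in which the $\ext^2V$-factor sits in the $i$-th slot reduces to the one-row case (which both you and the paper prove by the same multiplicity count), giving $\ker(\pi_i\circ\ad)=\Sym^{\lambda_i+1}(V)\otimes\bigotimes_{j\neq i}\Sym^{\lambda_j}(V)$, and the intersection of these over $i$ is $\Sym^{|\lambda|+1}(V)$. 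Since $\SF{(|\lambda|+1)}(V)$ does not occur in $V\otimes\SF{\lambda}(V)$ when $\lambda$ has at least two parts --- your own Pieri computation --- injectivity follows with no further highest-weight work. So either import that ambient-space argument, or actually exhibit, for each addable corner $r>1$, a vector of $\SF{\lambda+\epsilon_r}(V)$ with nonzero image; until one of these is done the proof is incomplete at its central step.
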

\begin{proof}
First consider the case of $\lambda=(n)$. The image of the adjoint map lands in $[S^{n}(\sL_{(2)})]_{n+1}=\ext^2 V
{\otimes} S^{n-1}(V)=[n,1]_{\GL}\oplus[n-1,1^2]_{\GL}$, and we are dividing out by the image of the adjoint action $V\otimes S^{n-1}(V)\to S^{n}(\mathsf{L}_{(2)})$. The source has decomposition $[n+1]_{\GL}\oplus [n,1]_{\GL}$ and since the map is not zero, the $[n,1]_{\GL}$ component must map onto the corresponding component in the image, leaving $[n+1]$ as the kernel.

For general $\lambda$, embed $\SF{\lambda}(V)$ into $\bigotimes_i \Sym^{\lambda_i}(V)$. Then the image of $\ad$ on $V \otimes \bigotimes_i \Sym^{\lambda_i}(V)$ lands in $\bigoplus_{i} [S^{\lambda_i}(\sL_{(2)})]_{\lambda_i+1}\otimes \bigotimes_{j\neq i} S^{\lambda_i}(V)$. Letting $\pi_i$ be the projection to the $i$th summand of this direct sum, we see that kernel of $\ad$ on
$V \otimes \bigotimes_i \Sym^{\lambda_i}(V)$ is equal to
$$\bigcap_i \ker \pi_i\circ\ad=\bigcap_i [\lambda_1]\otimes\cdots\otimes[\lambda_i+1]\otimes\cdots[\lambda_{r}].$$
It is not too hard to see that
is just $\Sym^{|\lambda|+1}(V)$. 
\end{proof}

Note that the lowest degree  part of $\overline{\SF{\lambda}(\sL_{(2)})}$  is in degree $|\lambda|$ and is equal to $\SF{\lambda}(V)$. In the next lemma, we compute what happens in one degree above this for $\lambda=(p,q)$ which are what appear in Theorem \ref{thm:L2calc}.
In fact, we do not need parts $d$ and $e$, but we include them for completeness.
\begin{lemma}
a) For $p \geq 2$ the degree $p+1$ part of $\overline{\SF{(p)}(\sL_{(2)})}$ is given by
$$
\left[\overline{\SF{(p)}(\sL_{(2)})}\right]_{p+1}=\SF{(p-1,1^2)}(V).
$$

b) For $p\geq 3$ the degree $p+2$ part of $\overline{\SF{(p,1)}(\sL_{(2)})}$ is given by
$$
\left[\overline{\SF{(p,1)}(\sL_{(2)})}\right]_{p+2}=\SF{(p,1^2)}(V) \oplus \SF{(p-1,2,1)}(V) \oplus \SF{(p-1,1^3)}(V).
$$

c) For $p \geq q+2$ and $q>1$ the degree $p+q+1$ part of $\overline{\SF{(p,q)}(\sL_{(2)})}$ is given by
$$
\left[\overline{\SF{(p,q)}(\sL_{(2)})}\right]_{p+q+1}=
\SF{(p+1,q-1,1)}(V) \oplus
\SF{(p,q,1)}(V) \oplus
\SF{(p-1,q+1,1)}(V) \oplus
\SF{(p,q-1,1^2)}(V) \oplus
\SF{(p-1,q,1^2)}(V).
$$

d) For $p\geq 2$, the degree $2p+1$ part of $\overline{\SF{(p,p)}(\sL_{(2)})}$ is given by
$$\left[\overline{\SF{(p,p)}(\sL_{(2)})}\right]_{2p+1}=\SF{(p+1,p-1,1)}(V).$$

e) For $p\geq 2$, the degree $2p$ part of $\overline{\SF{(p,p-1)}(\sL_{(2)})}$ is given by
$$\left[\overline{\SF{(p,p-1)}(\sL_{(2)})}\right]_{2p}=\SF{(p+1,p-1)}(V)\oplus \SF{(p+1,p-2,1)}(V)\oplus \SF{(p,p-1,1)}(V).$$
\end{lemma}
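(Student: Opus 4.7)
The plan is to prove all five parts by the same three--step procedure. Throughout I grade $\sL_{(2)}=V\oplus\ext^2 V$ by placing $V$ in degree $1$ and $\ext^2 V$ in degree $2$, so that in each case the task is to analyze the degree $|\lambda|+1$ part of $\SF{\lambda}(\sL_{(2)})$.

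\emph{Step 1: decompose the target.} First I would apply Lemma~\ref{lem:schurdecomp} to obtain
$$
\SF{\lambda}(\sL_{(2)}) \;=\; \bigoplus_{|\mu|+|\nu|=|\lambda|} c_{\mu,\nu}^{\lambda}\, \SF{\mu}(V)\otimes \SF{\nu}(\ext^2 V).
$$
The summands contributing to degree $|\lambda|+1$ are exactly those with $|\nu|=1$, i.e.\ $\nu=(1)$ and $|\mu|=|\lambda|-1$; by Pieri, $\mu$ then ranges over the partitions obtained from $\lambda$ by removing one box. For each such $\mu$ I would expand $\SF{\mu}(V)\otimes\ext^2 V=\SF{\mu}(V)\otimes\SF{(1,1)}(V)$ using the Pieri/Littlewood--Richardson rule: the result is the direct sum of $\SF{\nu'}(V)$ over partitions $\nu'$ obtained from $\mu$ by adding two boxes in distinct rows, subject to yielding a valid Young diagram.

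\emph{Step 2: identify the image of $\ad$.} Since $\ad(\ext^2V)=0$ on $\sL_{(2)}$ by class--$2$ nilpotency, and $[\SF{\lambda}(\sL_{(2)})]_{|\lambda|}=\SF{\lambda}(V)$, the adjoint image in degree $|\lambda|+1$ equals the image of the map
$$
\ad\colon V\otimes \SF{\lambda}(V)\longrightarrow [\SF{\lambda}(\sL_{(2)})]_{|\lambda|+1}
$$
analyzed in Lemma~\ref{lem:ad}. This map is injective whenever $\lambda$ has at least two parts, and for $\lambda=(p)$ its kernel is $\SF{(p+1)}(V)$. Hence in cases (b)--(e) the image is $\GL(V)$--isomorphic to $V\otimes \SF{\lambda}(V)=\bigoplus_\nu \SF{\nu}(V)$, where $\nu$ runs by Pieri over partitions obtained from $\lambda$ by adding one box; in case (a) the image is the single summand $\SF{(p,1)}(V)$.

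\emph{Step 3: take the quotient.} By Schur's lemma each irreducible $\SF{\nu}(V)$ occurring with multiplicity one in the image lands in a $1$--dimensional subspace of the corresponding isotypic component of the target, consuming exactly one copy. The quotient decomposition is obtained by subtracting these summand by summand. In part (a) the single summand $\SF{(p-1,1^2)}(V)$ survives. In part (c) the representation $\SF{(p,q,1)}$ appears with multiplicity two in the target (from both $\mu=(p-1,q)$ and $\mu=(p,q-1)$) but only once in the image, so it survives with multiplicity one in the quotient, consistent with the stated answer.

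\emph{Main obstacle.} The combinatorics is routine Pieri/LR expansion, but careful attention is needed for the borderline shapes. In part (d), $\lambda=(p,p)$ is rectangular, so $V\otimes \SF{(p,p)}(V)$ has only two summands (adding a box to row $2$ is forbidden) rather than three; in part (e), $\lambda=(p,p-1)$ is near--rectangular and the Pieri expansion $\SF{(p-1,p-1)}\otimes\ext^2 V$ omits the would--be summand of shape $(p-1,p,1)$. I would carry out each expansion systematically by drawing the Young diagrams and verifying admissibility, then match source and target summands to identify cancellations. No ideas beyond Lemmas~\ref{lem:schurdecomp} and~\ref{lem:ad} are required, and the entire case analysis can be mechanically verified with a symmetric--function library such as Sage, as in the example preceding the statement.
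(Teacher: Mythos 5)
Your three-step procedure is exactly the paper's own argument: the paper writes out only part (c), computing the degree $p+q+1$ piece as $(\SF{(p,q-1)}(V)\oplus\SF{(p-1,q)}(V))\otimes\SF{(1,1)}(V)$ via Lemma~\ref{lem:schurdecomp}, expanding by Littlewood--Richardson, and subtracting the $\ad$-image $V\otimes\SF{(p,q)}(V)$ using the injectivity from Lemma~\ref{lem:ad}, and then declares the other cases ``similar.'' Your execution of this for parts (a), (b), (c) is correct, including the observation that $\SF{(p,q,1)}$ occurs twice upstairs and once in the $\ad$-image.

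The gap is in your final claim that the same mechanical subtraction confirms parts (d) and (e) as stated; it does not, and asserting that Sage ``can verify'' the case analysis is not a substitute for running it. For (d), the degree $2p+1$ part is $\SF{(p,p-1)}(V)\otimes\ext^2 V=[p+1,p]\oplus[p+1,p-1,1]\oplus[p,p,1]\oplus[p,p-1,1^2]$, and the $\ad$-image is $V\otimes\SF{(p,p)}(V)=[p+1,p]\oplus[p,p,1]$ (injective by Lemma~\ref{lem:ad}), so the quotient is $[p+1,p-1,1]\oplus[p,p-1,1^2]$ --- the stated answer omits $[p,p-1,1^2]$. For (e), the target is $(\SF{(p-1,p-1)}\oplus\SF{(p,p-2)})\otimes\SF{(1,1)}=[p,p]\oplus[p+1,p-1]\oplus 2[p,p-1,1]\oplus[p-1,p-1,1^2]\oplus[p+1,p-2,1]\oplus[p,p-2,1^2]$ and the $\ad$-image is $[p+1,p-1]\oplus[p,p]\oplus[p,p-1,1]$, so the quotient is $[p,p-1,1]\oplus[p-1,p-1,1^2]\oplus[p+1,p-2,1]\oplus[p,p-2,1^2]$; in particular $[p+1,p-1]$ lies in the $\ad$-image and cannot survive, contrary to the stated formula (for $p=2$ the discrepancy is stark: one gets $[2,1,1]\oplus[1^4]$, not $[3,1]\oplus[2,1,1]$). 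Since the method is certainly the intended one (it is the one the paper uses for (c)) and the paper notes that (d) and (e) are not used elsewhere, the fault appears to lie in the statement of those two parts rather than in your procedure; but a proof of the lemma \emph{as written} cannot go through, and your proposal papers over exactly the step where this surfaces.
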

\begin{proof}
We will do part c), since the other cases are similar.
First we compute
$$
\left[\SF{(p,q)}(\sL_{(2)})\right]_{p+q+1} = (\SF{(p,q-1)}(V) \oplus \SF{(p-1,q)}(V) ) \otimes \SF{(1,1)}(V).
$$
This is because you need $\nu=(1)$ in the LR decomposition to get degree $p+q+1$. Now this
is equal to (dropping the Schur functor notation)
\begin{multline*}
[p+1,q] + [p+1,q-1,1] + [p,q,1] + [p,q-1,1,1]
+\\
[p,q+1] + [p,q,1] + [p-1,q+1,1] + [p-1,q,1,1]
\end{multline*}
which needs to be factored by the image of $\ad$ which is just $V\otimes \SF{(p,q)}(V)$, i.e.,
$$
[p+1,q] + [p,q+1] + [p,q,1]
$$
the difference of these two is the expression in the statement of the lemma
\end{proof}
\begin{remark}
The same idea can be used to compute the degree $p+q+2$ part of $\overline{\SF{(p,q)}(\sL_{(2)})}$.
For example if $q=0$ the result will be $[p-2,2^2] + [p-2,1^4]+[p-4,2^3]$
\end{remark}

The previous lemma, together with Theorem~\ref{thm:L2calc} produces lots of representations in the Johnson cokernel:
\begin{corollary}
We have
\begin{enumerate}
\item $[2k-1,1^2]_{\SP}\otimes \mathcal S_{2k+2}\hookrightarrow \mathsf{C}_{2k+5}$ for $k\geq 1$.
\item $([2k+1,1^2]_{\SP}\oplus[2k,2,1]_{\SP}\oplus[2k,1^3]_{\SP})\otimes \mathcal M_{2k+2}\hookrightarrow\mathsf{C}_{2k+7}.$
\item $\mathcal S_{2k-2\ell+2}\otimes([2k+1,2\ell-1,1]\oplus
[2k,2\ell,1] \oplus
[2k-1,2\ell+1,1] \oplus
[2k,2\ell-1,1^2] \oplus
[2k-1,2\ell,1^2])\hookrightarrow \mathsf{C}_{2k+2\ell+5}$ for $k> \ell>0$.
\item $\mathcal M_{2k-2\ell+2}\otimes([2k+1,2\ell-1,1]\oplus
[2k+1,2\ell+1,1] \oplus
[2k,2\ell+2,1] \oplus
[2k+1,2\ell,1^2] \oplus
[2k,2\ell+1,1^2])\hookrightarrow \mathsf{C}_{2k+2\ell+7}$ for $k> \ell>0$.
\end{enumerate}
\end{corollary}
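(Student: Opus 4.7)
The plan is to chain together the main results established up to this point. By Theorem~\ref{thm:intro} with $n=2$, composed with the surjection of the Corollary preceding Theorem~\ref{thm:L2calc}, there is a degree-preserving $\SP$-equivariant map from $\mathsf C$ to $H^1(\GL_2(\Z); \overline{\Sym(\sL_{(2)} \otimes \F^2)})$ which stably surjects onto every $[\lambda]_{\SP} \subset [\lambda]_{\GL}$ appearing in its image. Since $\mathsf C$ is a polynomial $\SP$-module (it is a quotient of $\mathsf D^+(V) \subset V \otimes \mathsf L(V)$) and hence completely reducible in each degree in the stable range, any surjection onto an irreducible $[\lambda]_{\SP}$ splits, so the target appears as a subrepresentation of $\mathsf C$. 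The corollary therefore reduces to identifying which $[\lambda]_{\GL}$-summands occur in the lowest non-trivial graded pieces of the right-hand side, tagged by the relevant spaces of modular or cusp forms.

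To carry this out, I would apply Theorem~\ref{thm:L2calc} to decompose the cohomology as a sum of contributions $\mathcal S_{2k-2\ell+2} \otimes \overline{\SF{(2k,2\ell)}(\sL_{(2)})}$ and $\mathcal M_{2k-2\ell+2} \otimes \overline{\SF{(2k+1,2\ell+1)}(\sL_{(2)})}$. For each such $\overline{\SF{(p,q)}(\sL_{(2)})}$, I would take its lowest non-trivial graded piece and read off the $\GL(V)$-decomposition from whichever case of the preceding lemma applies: part (a) with $\ell=0$, $p = 2k$ yields case (1); part (b) with $\ell=0$, $p = 2k+1$ yields case (2); and part (c) with $(p,q) = (2k,2\ell)$ and $(p,q) = (2k+1,2\ell+1)$ yields cases (3) and (4) respectively. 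The cokernel degree is then $(p+q+1) + 4$, where the $+4$ comes from the shift $2r = 4$ in the proposition giving the maps $\mathsf C_{k+2r} \to H_1(\mathcal G^{(r)}_{T(V)\Lie})_k$; this recovers $\mathsf C_{2k+5}$, $\mathsf C_{2k+7}$, $\mathsf C_{2k+2\ell+5}$, $\mathsf C_{2k+2\ell+7}$ respectively. The individual Schur functors listed in (1)--(4) are exactly those appearing in the lemma.

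The real work is already packaged in Theorem~\ref{thm:L2calc} and in the lowest-degree Schur functor computations of the preceding lemma; the remaining argument is essentially bookkeeping. The only subtle point worth checking is that each $[\lambda]_{\GL}$ restricts to a single $[\lambda]_{\SP}$-summand in the stable range, so that the $\GL$-labels produced by the lemma transfer directly to the $\SP$-labels in the statement. Since $g$ is taken arbitrarily large and every partition appearing has at most four parts, this restriction is trivial, and the identification is immediate.
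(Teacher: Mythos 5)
Your proposal is correct and follows exactly the route the paper intends: the paper states this corollary without proof as an immediate consequence of Theorem~\ref{thm:L2calc} and the preceding lemma, combined with the chain $\mathsf C_{m+4}\to H^1(\Out(F_2);\overline{T(V)^{\otimes 2}})_m\twoheadrightarrow H^1(\GL_2(\Z);\overline{\Sym(\sL_{(2)}\otimes\F^2)})_m$ and the stable surjectivity onto each $[\lambda]_{\SP}\subset[\lambda]_{\GL}$, and your degree bookkeeping ($p+q+1$ plus the shift $2r=4$) is right. One small remark: applying part (c) of the lemma with $(p,q)=(2k+1,2\ell+1)$ gives $[2k+2,2\ell,1]$ as the first summand in case (4), not the $[2k+1,2\ell-1,1]$ printed in the statement (which has the wrong number of boxes for degree $2k+2\ell+7$), so your assertion that the listed Schur functors are ``exactly those appearing in the lemma'' is flagging what is evidently a typo in the paper rather than a gap in your argument.
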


Recall that $\mathcal A_k$ is the set of Young diagrams with $2k$ boxes and an even number of boxes in each column.

\begin{proposition}
There is an injection
$$
\bigoplus_{\lambda\in \mathcal B_{m}} \SF{\lambda}(V)\hookrightarrow \overline{\SF{(m)}(\sL_{(2)})},
$$
where $\mathcal B_{m}$ is the set of Young diagrams formed by placing the diagram for a partition $(a)$ (where $1\leq a\leq m$) directly over a partition $\mu\in\mathcal A_{m-a}$ if that forms a legal Young diagram.
\end{proposition}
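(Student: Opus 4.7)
The plan is to decompose $\SF{(m)}(\sL_{(2)}) = S^m(V \oplus \ext^2 V) = \bigoplus_{a=0}^m W_a$, where $W_a := S^a(V) \otimes S^{m-a}(\ext^2 V)$, and analyze the adjoint action one graded piece at a time. Since $\sL_{(2)}$ is nilpotent of class $2$, $[V, \ext^2 V] = 0$, so for $X \in V$ the derivation $\ad(X)$ only touches the $V$-factors of a monomial in $W_{a+1}$, producing an element of $W_a$. Hence $\ad \colon V \otimes \SF{(m)}(\sL_{(2)}) \to \SF{(m)}(\sL_{(2)})$ splits as a collection of maps $\delta_{a+1}\colon V \otimes W_{a+1} \to W_a$ (with $\delta_{m+1} = 0$), and consequently
$$
\overline{\SF{(m)}(\sL_{(2)})} \;=\; \bigoplus_{a=0}^m W_a / \im \delta_{a+1}.
$$

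The key step is to identify $\im \delta_{a+1}$ up to a Schur-functor inclusion. On a pure tensor one computes
$$
\delta_{a+1}\bigl(X \otimes v_1 \cdots v_{a+1} \otimes Q\bigr) \;=\; \sum_i (v_1 \cdots \hat v_i \cdots v_{a+1}) \cdot (X \wedge v_i) \cdot Q,
$$
which factors through the antisymmetrization map $V \otimes S^{a+1}(V) \to S^a(V) \otimes \ext^2 V$. This antisymmetrization has kernel $S^{a+2}(V)$ and image equal to the Schur summand $\SF{(a+1,1)}(V) \subset S^a(V) \otimes \ext^2 V$, so $\delta_{a+1}$ factors as
$$
V \otimes W_{a+1} \twoheadrightarrow \SF{(a+1,1)}(V) \otimes S^{m-a-1}(\ext^2 V) \xrightarrow{\mathrm{mult}} W_a,
$$
and $\im \delta_{a+1}$ is therefore a quotient of $\SF{(a+1,1)}(V) \otimes S^{m-a-1}(\ext^2 V)$.

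Using Proposition~\ref{prop:onepart} to decompose $S^{m-a-1}(\ext^2 V) = \bigoplus_{\nu \in \mathcal A_{m-a-1}} \SF{\nu}(V)$, any irreducible $\SF{\lambda}(V)$ appearing in $\im \delta_{a+1}$ must appear in some $\SF{(a+1,1)}(V) \otimes \SF{\nu}(V)$. By the Littlewood--Richardson rule this forces $(a+1,1) \subseteq \lambda$, so in particular $\lambda_1 \geq a+1$. For any $\lambda = (a,\mu) \in \mathcal B_m$ one has $\lambda_1 = a$, so no such $\SF{(a,\mu)}(V)$ appears in $\im \delta_{a+1}$. On the other hand, Pieri's rule applied to $S^a(V) \otimes \SF{\mu}(V) \subseteq W_a$ produces $\SF{(a, \mu)}(V)$ as a direct summand precisely when $a \geq \mu_1$, which is the legality condition defining $\mathcal B_m$; this gives a distinguished copy of $\SF{(a,\mu)}(V)$ inside $W_a$.

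To finish, the partitions $(a, \mu) \in \mathcal B_m$ are pairwise distinct, so the distinguished Pieri copies assemble into a $\GL(V)$-equivariant inclusion $\bigoplus_{\lambda \in \mathcal B_m} \SF{\lambda}(V) \hookrightarrow \SF{(m)}(\sL_{(2)})$. Each summand is irreducible and does not appear as a subquotient of $\im \delta_{a+1}$, so by Schur's lemma its intersection with $\im \delta_{a+1}$ is zero, and the pairwise non-isomorphic irreducibles remain linearly independent after passing to the quotient. This yields the desired injection $\bigoplus_{\lambda \in \mathcal B_m} \SF{\lambda}(V) \hookrightarrow \overline{\SF{(m)}(\sL_{(2)})}$. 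The most delicate verification is the Schur-theoretic identification of the image of the antisymmetrization $V \otimes S^{a+1}(V) \to S^a(V) \otimes \ext^2 V$ as $\SF{(a+1,1)}(V)$; the rest is clean bookkeeping with Pieri and Littlewood--Richardson.
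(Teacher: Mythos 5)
Your proof is correct and takes essentially the same route as the paper: decompose $\Sym^m(\sL_{(2)})$ by $V$-degree into the pieces $W_a=\Sym^a(V)\otimes\Sym^{m-a}(\ext^2 V)$, use Pieri to locate each $\SF{\lambda}(V)$ with $\lambda\in\mathcal B_m$ inside $W_a$, and observe that every irreducible constituent of the adjoint image landing in $W_a$ has first row of length at least $a+1$, hence cannot coincide with any $\lambda\in\mathcal B_m$. The only (harmless) extra step is that you identify the image of the antisymmetrization $V\otimes \Sym^{a+1}(V)\to \Sym^a(V)\otimes\ext^2 V$ as exactly $\SF{(a+1,1)}(V)$, whereas the paper gets by with the weaker fact that all constituents of the source $V\otimes S_{a+1,b-1}$ are already at least $(a+1)$-wide.
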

\begin{proof}
We have $\displaystyle \Sym^{m}(\sL_{(2)})\cong\bigoplus_{0\leq a\leq m} S_{a,m-a}$, where $S_{a,b}=\Sym^{a}(V)\otimes \Sym^{b}(\ext^2 V)$.
By the Pieri rule, the diagrams in $\mathcal B_{m}$ formed from putting $(a)$ over a diagram from $\mathcal A_{2k-a}$ appear in $S_{a,b}$.
The adjoint map decomposes as $V\otimes S_{a+1,b-1}\to S_{a,b}$, and the Young diagrams in $V\otimes S_{a+1,b-1}$ are all at least $a+1$-wide. Hence they can't appear in $\mathcal B_{m}$.
\end{proof}

\begin{corollary}
Let $\lambda\in \mathcal A_k$ be a Young diagram, say $\lambda=(\lambda_1,\ldots,\lambda_\ell)$. For all $m\geq \lambda_1$, we have that
$$[m,\lambda_1,\ldots,\lambda_\ell]\otimes \mathcal S_{m+k+2}\hookrightarrow \mathsf{C}_{m+k+2}.$$
\end{corollary}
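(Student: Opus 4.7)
The plan is to chain together the preceding Proposition, Theorem~\ref{thm:L2calc}, the Corollary providing the $\GL(V)$-equivariant surjection $H^1(\Out(F_2);\overline{T(V)^{\otimes 2}}) \twoheadrightarrow H^1(\GL_2(\Z);\overline{\Sym(\sL_{(2)}\otimes\F^2)})$, and the main Theorem~\ref{thm:intro}.

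First I would apply the preceding Proposition with its parameter taken to be $M=m+k$. Because $\lambda=(\lambda_1,\ldots,\lambda_\ell)\in\mathcal A_k$ and $m\geq \lambda_1$, stacking $(m)$ on top of $\lambda$ produces a legal Young diagram, so $(m,\lambda_1,\ldots,\lambda_\ell)\in\mathcal B_{m+k}$, and the Proposition supplies a $\GL(V)$-equivariant injection
\[
\SF{(m,\lambda_1,\ldots,\lambda_\ell)}(V) \hookrightarrow \overline{\SF{(m+k)}(\sL_{(2)})}.
\]
Second, I would apply Theorem~\ref{thm:L2calc} with the one-part index $(2k',2\ell')=(m+k,0)$ (requiring $m+k$ even so that both ``parts'' are even), producing the summand $\mathcal S_{m+k+2}\otimes\overline{\SF{(m+k)}(\sL_{(2)})}$ of $H^1(\GL_2(\Z);\overline{\Sym(\sL_{(2)}\otimes\F^2)})$. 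Composing with the previous step realises $\mathcal S_{m+k+2}\otimes [m,\lambda_1,\ldots,\lambda_\ell]_{\GL}$ as a $\GL(V)$-subrepresentation of $H^1(\GL_2(\Z);\overline{\Sym(\sL_{(2)}\otimes\F^2)})$.

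Third, the surjection from the Corollary preceding Theorem~\ref{thm:L2calc} is $\GL(V)$-equivariant, so by complete reducibility of finite-dimensional $\GL(V)$-representations in characteristic $0$ this $\GL(V)$-summand lifts to a $\GL(V)$-subrepresentation of $H^1(\Out(F_2);\overline{T(V)^{\otimes 2}})$. Finally, Theorem~\ref{thm:intro} asserts that for $g$ large compared to $|\lambda|$ the invariant $\mathsf C\to H^1(\Out(F_2);\overline{T(V)^{\otimes 2}})$ surjects onto each $[\lambda]_{\SP}\subset [\lambda]_{\GL}$ in the $\GL$-decomposition of its image. Applying this with $\lambda=(m,\lambda_1,\ldots,\lambda_\ell)$ and using semisimplicity of $\SP$-modules in characteristic $0$ yields the claimed embedding of $[m,\lambda_1,\ldots,\lambda_\ell]_{\SP}\otimes\mathcal S_{m+k+2}$ into $\mathsf C$.

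The only delicate point is the implicit parity constraint $m+k\equiv 0\pmod 2$ needed for the second step: for a one-row partition $(m+k)$, the ``both parts odd'' case of Theorem~\ref{thm:L2calc} does not apply, so if one wishes to remove the parity restriction a separate argument (or a modular-form analogue coming from a different summand of $\overline{\Sym(\sL_{(2)}\otimes\F^2)}$) is required. Once this is accepted, every other step is formal bookkeeping with $\GL(V)$-equivariant maps and semisimplicity, so I do not expect hidden difficulties.
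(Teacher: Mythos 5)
Your argument is the intended one: the paper gives no separate proof of this corollary, and the chain you describe --- the preceding Proposition realizing $\SF{(m,\lambda_1,\ldots,\lambda_\ell)}(V)$ inside $\overline{\SF{(m+k)}(\sL_{(2)})}$ (since $(m,\lambda_1,\ldots,\lambda_\ell)\in\mathcal B_{m+k}$), the summand $\mathcal S_{m+k+2}\otimes\overline{\SF{(m+k)}}(\sL_{(2)})$ of Theorem~\ref{thm:L2calc}, the $\GL(V)$-equivariant surjection from $H^1(\Out(F_2);\overline{T(V)^{\otimes 2}})$, and the surjectivity of the trace onto each $[\lambda]_{\SP}\subset[\lambda]_{\GL}$ --- is exactly what is meant.

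Two remarks. First, the parity constraint you flag is not a genuine gap and needs no separate argument: modular forms of odd weight for $\SL_2(\Z)$ vanish (as $-I$ acts by $(-1)^{\text{weight}}$), so $\mathcal S_{m+k+2}=0$ whenever $m+k$ is odd and the statement is vacuous in that case. Second, you never pin down the degree of the cokernel in which the representation lands, writing only ``into $\mathsf C$.'' Tracking it through the maps $\mathsf C_{d+2r}\to H_1(\mathcal G^{(r)}_{T(V)\Lie})_d$ with $r=2$: the representation $[m,\lambda_1,\ldots,\lambda_\ell]$ has $m+2k$ boxes (since $\lambda\in\mathcal A_k$ has $2k$ boxes), hence sits in degree $m+2k$ and lands in $\mathsf C_{m+2k+4}$ --- consistent with the earlier corollary, where $[2k-1,1^2]$ of degree $2k+1$ lands in $\mathsf C_{2k+5}$. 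The subscript $m+k+2$ in the statement (which merely duplicates the weight of the cusp forms) appears to be a typo, and your write-up would be strengthened by making this degree count explicit.
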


\end{document}